\definecolor{e-mail}{rgb}{0,.40,.80}
\definecolor{reference}{rgb}{.20,.60,.22}
\definecolor{citation}{rgb}{0,.40,.80}
\newtheorem{thm}{Theorem}
\newtheorem*{mthm}{\protect{Theorem~\ref{thm:main}}}
\newtheorem{cor}[thm]{Corollary}
\newtheorem{lem}[thm]{Lemma}
\newtheorem{prop}[thm]{Proposition}
\theoremstyle{definition}
\newtheorem{defn}[thm]{Definition}
\theoremstyle{remark}
\newtheorem{rem}[thm]{Remark}
\numberwithin{thm}{section}
\theoremstyle{definition}
\theoremstyle{definition}
\newtheorem{eg}[thm]{Example}
\theoremstyle{definition}
\numberwithin{equation}{section}
\newcommand{\K}{\mathbb K} 
\newcommand{\ie}{{\it  i.e.}}
\title{Twisted Mahler discrete residues}
\author{Carlos E. Arreche}
\address{Department of Mathematical Sciences, The University of Texas at Dallas, Richardson, TX 75080}
\email{arreche@utdallas.edu}
 \thanks{The work of C.E.~Arreche was partially supported by NSF grant CCF-1815108.} 
\author{Yi Zhang}
\address{Department of Foundational  Mathematics, School of Mathematics and Physics, Xi'an Jiaotong-Liverpool University, 
 Suzhou, 215123, China}
\email{Yi.Zhang03@xjtlu.edu.cn}
\thanks{The work of Y.~Zhang was supported by the NSFC Young Scientist Fund No.\ 12101506, the Natural Science Foundation of the Jiangsu Higher Education Institutions of China No.\ 21KJB110032, and XJTLU Research Development Fund No.\ RDF-20-01-12.}
\date{\today}
\begin{document}

\begin{abstract}
Recently we constructed Mahler discrete residues for rational functions and showed they comprise a complete obstruction to the Mahler summability problem of deciding whether a given rational function $f(x)$ is of the form $g(x^p)-g(x)$ for some rational function $g(x)$ and an integer $p > 1$. Here we develop a notion of $\lambda$-twisted Mahler discrete residues for $\lambda\in\mathbb{Z}$, and show that they similarly comprise a complete obstruction to the twisted Mahler summability problem of deciding whether a given rational function $f(x)$ is of the form $p^\lambda g(x^p)-g(x)$ for some rational function $g(x)$ and an integer $p>1$. We provide some initial applications of twisted Mahler discrete residues to differential creative telescoping problems for Mahler functions and to the differential Galois theory of linear Mahler equations. 
\end{abstract}

\keywords{Mahler operator, difference fields, difference equations, partial fractions, discrete residues, summability, creative telescoping}

\maketitle

\tableofcontents


\section{Introduction} \label{sec:introduction}
 Continuous residues are fundamental and crucial tools in complex analysis, and have extensive and compelling applications in combinatorics~\cite{Flajolet_Sedgewick}. In the last decade, a theory of discrete and $q$-discrete residues was proposed in~\cite{ChenSinger2012} for the study of telescoping problems for bivariate rational functions, and subsequently found applications in the computation of differential Galois groups of second-order linear difference~\cite{arreche:2017} and $q$-difference equations~\cite{ArrecheZhang2020} and other closely-related problems \cite{Chen:2018,HouWang:2015}. More recently, the authors of~\cite{Caruso2021, Caruso-Durand:2021} developed a theory of residues for skew rational functions, which has important applications in duals of linearized Reed-Solomon codes~\cite{Caruso-Durand:2021}. In \cite{HardouinSinger2021} the authors introduce a notion of elliptic orbit residues 
 which, in analogy with \cite{ChenSinger2012}, similarly serves as a complete obstruction to summability in the context of elliptic shift difference operators. In~\cite{arreche-zhang:2022} we initiated a theory of Mahler discrete residues aimed at helping bring to the Mahler case the successes of these earlier notions of residues.

 Let $\K$ be an algebraically closed field of characteristic zero and $\K(x)$ be the field of rational functions in an indeterminate $x$ over $\K$. Fix an integer $p\geq 2$. For a given $f(x)\in\K(x)$, we considered in \cite{arreche-zhang:2022} the \emph{Mahler summability problem} of deciding effectively whether $f(x)=g(x^p)-g(x)$ for some $g(x)\in\mathbb{K}(x)$; if so, we say $f(x)$ is \emph{Mahler summable}. We defined in \cite{arreche-zhang:2022} a collection of $\mathbb{K}$-vectors, called \emph{Mahler discrete residues} of $f(x)$ and defined purely in terms of its partial fraction decomposition, having the property that they are all zero if and only if $f(x)$ is Mahler summable.

 More generally, a (linear) \emph{Mahler equation} is any equation of the form \begin{equation}\label{eq:mahler-intro}y(x^{p^n})+a_{n-1}(x)y(x^{p^{n-1}})+\dots+a_1(x)y(x^p)++a_0(x)y(x)=0,\end{equation} where the $a_i(x)\in\mathbb{K}(x)$ and $y(x)$ is an unknown ``function'' (or possibly some more general entity, e.g., the generating series of a combinatorial object, a Puisseux series, etc.). The motivation to study Mahler equations in general comes from several directions. They first arose in \cite{Mahler1929} in connection with transcendence results on values of special functions at algebraic numbers, and have since found other applications to automata theory and automatic sequences since the work of \cite{cobham:1968}. We refer to~\cite{AB:2017,dhr-m, Dreyfus2018, adh21} and the references therein for more details. We also mention that a different (and, for some purposes, better) approach to the Mahler summability problem is contained in \cite{Dreyfus2018}, where the authors develop efficient algorithms to find, in particular, all the rational solutions to a linear Mahler equation. Thus \cite{Dreyfus2018} decides efficiently whether any \emph{given} $f(x)\in\mathbb{K}(x)$ is Mahler summable: namely, by either actually finding the corresponding certificate $g(x)\in \K(x)$ such that $f(x)=g(x^p)-g(x)$ if it exists or else deciding that there is no such $g(x)\in\K(x)$. We emphasize that, in contrast, the approach undertaken in \cite{arreche-zhang:2022} is obstruction-theoretic, with the upshot that it spells out (theoretically) exactly what it takes for any $f(x)\in\K(x)$ whatsoever to be Mahler summable or not, but with the drawback that it is likely to be infeasible in practice for all but the simplest/smallest choices of $f(x)$. All the same, the approach initiated in \cite{arreche-zhang:2022}, and continued in the present work, is a worthwhile and useful complement to that of \cite{Dreyfus2018} --- not only because of the theoretical questions that it answers for the first time, but moreover also because of its practical implications.

A particularly fruitful approach over the last few decades to study difference equations in general, and Mahler equations such as \eqref{eq:mahler-intro} in particular, is through the Galois theory for linear difference equations developed in \cite{vanderput-singer:1997}, and the differential (also sometimes called parameterized) Galois theory for difference equations developed in \cite{HardouinSinger2008}. Both theories associate a geometric object to a given difference equation such as \eqref{eq:mahler-intro}, called the \emph{Galois group}, that encodes the sought (differential-)algebraic properties of the solutions to the equation. There are now several algorithms and theoretical results (see in particular \cite{roques:2017,dhr-m,arreche-singer:2016,arreche-dreyfus-roques:2019}) addressing  qualitative questions about solutions of Mahler equations \eqref{eq:mahler-intro}, in particular whether they must be (differentially) transcendental, which rely on procedures to compute ``enough'' information about the corresponding Galois group (i.e., whether it is ``sufficiently large''). These Galois-theoretic arguments very often involve, as a sub-problem, deciding whether a certain auxiliary object (often but not always a rational solution to some Riccati-type equation) is Mahler summable (possibly after applying some linear differential operator to it, i.e., a telescoper). Rather than being able to answer the Mahler summability question for any one individual rational function, the systematic obstructions to the Mahler summability problems developed here serve as essential building blocks for other results and algorithms that rely on determining Mahler summability as an intermediate step. An immediate application of the technology of the technology developed here is Proposition~\ref{prop:nishioka}, which has the following concrete consequence (when paired with the results of \cite[Theorem~1.3]{adh21}): if $y_1(x),\dots,y_t(x)\in\mathbb{K}((x))$ are Laurent series solutions to Mahler equations of the form \[y_i(x^p)=a_i(x)y_i(x)\] for some non-zero $a_i(x)\in\mathbb{K}(x)$, then either the $y_1(x),\dots,y_t(x)$ are differentially independent over $\mathbb{K}(x)$ or else they are multiplicatively dependent over $\mathbb{K}(x)^\times$, i.e., there exist integers $k_1,\dots,k_t\in\mathbb{Z}$, not all zero, such that $\prod_{i=1}^ty_i(x)^{k_i}\in\mathbb{K}(x)$. Let us explain in more detail the technology that we develop.

For arbitrary $\lambda\in\mathbb{Z}$ and $f(x)\in\mathbb{K}(x)$, we say that $f(x)$ is $\lambda$\emph{-Mahler summable} if there exists $g(x)\in\mathbb{K}(x)$ such that $f(x)=p^\lambda g(x^p)-g(x)$. We shall construct certain $\mathbb{K}$-vectors from the partial fraction decomposition of $f(x)$, which we call the \emph{(twisted)} $\lambda$-\emph{Mahler discrete residues} of $f(x)$, and prove our main result in Section~\ref{sec:proof}:
\begin{thm}\label{thm:main} For $\lambda\in\mathbb{Z}$, $f(x)\in\mathbb{K}(x)$ is $\lambda$-Mahler summable if and only if every $\lambda$-Mahler discrete residue of $f$ is zero.
\end{thm}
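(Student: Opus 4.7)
The plan is to follow the blueprint of \cite{arreche-zhang:2022}, where we established the analogous result in the untwisted case $\lambda=0$, making the adjustments required to accommodate the twist factor $p^\lambda$. The argument naturally splits into the two implications, the forward (``only if'') direction being essentially tautological and the converse being the substantive one.

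For the forward direction, the $\lambda$-Mahler discrete residues are (or should be) defined precisely so that every elementary summable piece of the form $p^\lambda h(x^p) - h(x)$ contributes zero to each residue. By $\mathbb{K}$-linearity it then suffices to verify this vanishing on a handful of building blocks obtained from the partial fraction expansion of $h(x)$, grouped according to the behavior of the pole $\alpha$ under the Mahler dynamics $\alpha\mapsto\alpha^p$.

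For the converse, I would decompose $f(x)$ into partial fractions and aggregate the contributions across each \emph{Mahler orbit}, i.e., the equivalence class of a pole $\alpha$ under the relation generated by $\alpha\sim\alpha^p$. Within each such orbit, develop a reduction procedure that iteratively subtracts explicit pieces of the form $p^\lambda h(x^p) - h(x)$ so as to concentrate the remaining pole support onto a canonical family of orbit representatives (e.g., for aperiodic orbits, the minimal one under the depth of the tree; for periodic orbits, any cycle representative). After the reduction terminates, what survives in each orbit is determined by a short list of coefficients whose aggregates, weighted appropriately by powers of $p^\lambda$ accumulated along the iteration, are by definition the $\lambda$-Mahler discrete residues of $f$. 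If all these vanish, the surviving remainder is zero, and the certificate $g(x)$ is assembled by summing the pieces that were subtracted along the way.

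The main obstacle lies in the orbits where the Mahler dynamics is periodic, namely the orbit $\{0\}$ together with its contribution from the polynomial part of $f$ (the pole ``at infinity''), and the orbits of roots of unity of order coprime to $p$. For aperiodic orbits the reduction terminates in finitely many steps with no residual obstruction, so no residues are needed. For periodic orbits the twist factor $p^\lambda$ changes the linear algebra nontrivially: an obstruction that was genuine in the case $\lambda=0$ may now become invertible (for instance the constant term of the polynomial part, where the equation $(p^\lambda-1)b_0 = a_0$ is solvable precisely when $\lambda\neq 0$), while other obstructions persist in modified form, weighted along each cycle by a geometric combination of powers of $p^\lambda$. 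Carefully identifying the correct residual invariants for each orbit type, and verifying that they truly obstruct summability (rather than being spuriously introduced by the reduction), is where the bulk of the technical work will be concentrated.
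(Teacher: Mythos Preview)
Your overall reduction strategy --- subtract explicit $\lambda$-summable pieces $\Delta_\lambda^{(n)}(\cdot)$ so as to concentrate the poles of $f_\tau$ at a single ``height'' within each Mahler tree $\tau$, and declare the surviving partial-fraction coefficients to be the residues --- is exactly what the paper does. But you have the logical architecture inverted, and you are missing the one tool that makes the argument close.

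First, the directions. Once the reduction $\bar f_{\lambda,\tau}$ is in hand, the \emph{converse} (residues zero $\Rightarrow$ summable) is the trivial direction: $\bar f_{\lambda,\tau}=0$ and $f_\tau-\bar f_{\lambda,\tau}$ is $\lambda$-summable by construction. The substantive direction is the \emph{forward} one: if some residue is nonzero then $\bar f_{\lambda,\tau}\neq 0$, and you must show that this specific $\bar f_{\lambda,\tau}$ cannot be $\lambda$-summable. Your plan for this --- verify on elementary building blocks $\Delta_\lambda(h)$ and extend by linearity --- will not work as stated, because $\mathrm{dres}_\lambda(f,\tau,k)$ is \emph{not} $\mathbb{K}$-linear in $f$: its very support depends on $\mathrm{ht}(f,\tau)$ (and on the bouquet $\beta(f,\tau)$ in the non-torsion case), which changes under addition.

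The missing ingredient is a \emph{Mahler dispersion} criterion (Theorem~\ref{thm:summable-dispersion}): if $f$ is $\lambda$-summable then $\mathrm{disp}(f,\tau)>0$ for every $\tau\in\mathrm{supp}(f)$, with precisely enumerated exceptions when $\lambda\geq 1$ and $\tau\in\mathcal{T}_+$ and $\mathrm{ord}(f,\tau)=\lambda$. The reduction is engineered so that $\bar f_{\lambda,\tau}$, if nonzero, has dispersion~$0$; the criterion then forces $\bar f_{\lambda,\tau}=0$. This is the key lemma and it is not tautological --- it requires analyzing how $\Delta_\lambda$ spreads poles (Proposition~\ref{prop:summable-dispersion}).

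Two further corrections. Your sentence ``for aperiodic orbits the reduction terminates in finitely many steps with no residual obstruction, so no residues are needed'' is wrong as written: non-torsion trees carry genuine residues (Definition~\ref{defn:non-torsion-residues}), and they are nonzero in general. What is true is that in the non-torsion case the dispersion criterion has no exceptions, so the argument above goes through cleanly. By contrast, for torsion $\tau\in\mathcal{T}_+$ with $\lambda\geq 1$ the exceptions in Theorem~\ref{thm:summable-dispersion} are real: there exist nonzero $g_\tau$ supported on $\mathcal{C}(\tau)$ with $\Delta_\lambda(g_\tau)$ having dispersion~$0$ (Remark~\ref{rem:lambda-exceptions}). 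Handling this forces an analysis of the cycle map $\mathcal{D}_{\lambda,\tau}$ (Definition~\ref{defn:d-map}), whose kernel is one-dimensional precisely when $\lambda\geq 1$ (Lemma~\ref{lem:d-map-kernel}), and requires introducing an auxiliary scalar invariant $\omega_{\lambda,\tau}(f)$ (Definition~\ref{defn:omega-average}) together with an extra family of residue components supported on $\mathcal{C}(\tau)$ itself (equation~\eqref{eq:torsion-residues-cycle}). Your proposal gestures at ``modified'' obstructions along cycles but does not anticipate this layer, which is where most of the work in the torsion case actually lives.
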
 

Our desire to develop an obstruction theory for such a ``twisted'' $\lambda$-Mahler summability problem, beyond the ``un-twisted'' $0$-Mahler summability problem considered in \cite{arreche-zhang:2022}, is motivated by our desire to apply this obstruction theory to the following kind of \emph{Mahler creative telescoping problem}. Given $f_1,\dots,f_n\in\K(x)$ decide whether there exist linear differential operators $\mathcal{L}_1,\dots,\mathcal{L}_n\in\K[\delta]$, for $\delta$ some suitable derivation, such that $\mathcal{L}_1(f_1)+\dots+\mathcal{L}_n(f_n)$ is suitably Mahler summable. The unfortunately vague (but deliberate) double-usage of ``suitable'' above is due to the fact that there are in the Mahler case two traditional and respectable ways to adjoin a ``Mahler-compatible'' derivation in order to study differential-algebraic properties of solutions of Mahler equations, as we next explain and recall.

A $\sigma\delta$-field is a field equipped with an endomorphism $\sigma$ and a derivation $\delta$ such that $\sigma\circ\delta=\delta\circ\sigma$. Such are the basefields considered in the $\delta$-Galois theory for linear $\sigma$-equations developed in \cite{HardouinSinger2008}. Denoting by $\sigma:\K(x)\rightarrow\K(x):f(x)\mapsto f(x^p)$ the \emph{Mahler endomorphism}, one can show there is no non-trivial derivation $\delta$ on $\K(x)$ that commutes with this $\sigma$. In the literature one finds the following two approaches (often used in combination; see e.g. \cite{dhr-m,adh21}): (1) take $\delta=x\frac{d}{dx}$, and find a systematic way to deal with the fact that $\sigma$ and $\delta$ do not quite commute (but almost do), $\sigma\circ\delta=p\, \delta\circ\sigma$; or (2) work over the larger field $\K(x, \log x)$, where $\sigma(\log x)=p\log x$, and set $\delta=x\log x\frac{d}{dx}$, and find a systematic way to deal with this new element $\log x$ as the cost of having $\sigma\circ\delta=\delta\circ\sigma$ on the nose. There is, to be sure, a dictionary of sorts between these two approaches. We postpone a more careful discussion of these issues until it becomes absolutely necessary in Section~\ref{sec:telescoping}, except to adopt the latter approach in this introduction to briefly motivate the centrality of the $\lambda$-Mahler summability problems for arbitrary $\lambda\in\mathbb{Z}$ in the differential study of Mahler functions.

Let us consider the $\sigma\delta$-field $L:=\K(x, \log x)$, and given $F\in L$, let us write the $\log$-Laurent series expansion \[F=\sum_{\lambda\geq N}f_\lambda(x)\log^\lambda x\in \K(x)((\log x)),\] where $f_\lambda(x)\in\K(x)$ for each $\lambda\in \mathbb{Z}$, and $\log^\lambda x:=[\log x]^\lambda$. Let us suppose that there exists $G\in \hat{L}:=\K(x)((\log x))$ such that $F=\sigma(G)-G$ (where $\sigma$ is applied term-by-term). Writing such a putative $G=\sum_{\lambda\geq N} g_\lambda(x)\log^\lambda x \in \hat{L}$, for some $g_\lambda(x)\in\K(x)$ for $\lambda\in\mathbb{Z}$, we find that $F$ is Mahler summable within $\hat{L}$ if and only if each $f_\lambda(x)=p^\lambda g_\lambda(x^p)-g(x)$ for each $\lambda\in \mathbb{Z}$. 
 
Our strategy expands upon that of \cite{arreche-zhang:2022}, which in turn was inspired by that of \cite{ChenSinger2012}: for $\lambda\in\mathbb{Z}$, we utilize the coefficients occurring in the partial fraction decomposition of $f(x)$ to construct in Section~\ref{sec:reduction} a $\lambda$\emph{-Mahler reduction} $\bar{f}_\lambda(x)\in\mathbb{K}(x)$ such that \begin{equation}\label{eq:mahler-reduction}\bar{f}_\lambda(x)=f(x)+\bigl(p^\lambda g_\lambda(x^p)-g_\lambda(x)\bigr)\end{equation} for some $g_\lambda(x)\in\mathbb{K}(x)$ (whose explicit computation it is our purpose to avoid!), with the structure of this $\bar{f}_\lambda(x)$ being such that it cannot possibly be $\lambda$-Mahler summable unless $\bar{f}_\lambda(x)=0$. The $\lambda$-Mahler discrete residues of $f(x)$ are (vectors whose components are) the coefficients occurring in the partial fraction decomposition of $\bar{f}_\lambda(x)$. This $\bar{f}_\lambda(x)$ plays the role of a ``$\lambda$-Mahler remainder'' of $f(x)$, analogous to the remainder of Hermite reduction in the context of integration. 


\section{Preliminaries} \label{sec:preliminaries}
In this section we recall and expand upon some conventions, notions, and ancillary results from \cite{arreche-zhang:2022} that we shall use systematically throughout this work.

\subsection{Notation and conventions}

We fix once and for all an algebraically closed field $\mathbb{K}$ of characteristic zero and an integer $p\geq 2$ (not necessarily prime). We denote by $\mathbb{K}(x)$ the field of rational functions in the indeterminate $x$ with coefficients in $\mathbb{K}$. We denote by $\sigma:\mathbb{K}(x)\rightarrow \mathbb{K}(x)$ the $\mathbb{K}$-linear endomorphism defined by $\sigma(x)=x^p$, called the \emph{Mahler operator}, so that $\sigma(f(x))=f(x^p)$ for $f(x)\in\mathbb{K}(x)$. For $\lambda\in\mathbb{Z}$, we write $\Delta_\lambda:=p^\lambda\sigma-\mathrm{id}$, so that $\Delta_\lambda(f(x))=p^\lambda f(x^p)-f(x)$ for $f(x)\in\mathbb{K}(x)$. We often suppress the functional notation and write simply $f\in\mathbb{K}(x)$ instead of $f(x)$ whenever no confusion is likely to arise. We say that $f\in \mathbb{K}(x)$ is $\lambda$\emph{-Mahler summable} if there exists $g\in\mathbb{K}(x)$ such that $f=\Delta_\lambda(g)$.

Let $\mathbb{K}^\times=\mathbb{K}\backslash\{0\}$ denote the multiplicative group of $\mathbb{K}$. Let $\mathbb{K}^\times_t$ denote the torsion subgroup of $\mathbb{K}^\times$, \ie, the group of roots of unity in $\mathbb{K}^\times$. For $\zeta\in\mathbb{K}^\times_t$, the \emph{order} of $\zeta$ is the smallest $r\in\mathbb{N}$ such that $\zeta^r=1$. We fix once and for all a compatible system of $p$-power roots of unity $(\zeta_{p^n})_{n\geq 0} \subset \mathbb{K}^\times_t$, that is, each $\zeta_{p^n}$ has order $p^n$ and $\zeta_{p^n}^{p^\ell}=\zeta_{p^{n-\ell}}$ for $0\leq\ell\leq n$. 
Each $f\in\mathbb{K}(x)$ decomposes uniquely as \begin{equation}\label{eq:f-decomposition}f=f_{\infty}+f_\mathcal{T},\end{equation} where $f_{\infty}\in\mathbb{K}[x,x^{-1}]$ is a Laurent polynomial and $f_\mathcal{T}=\frac{a}{b}$ for polynomials $a,b\in\mathbb{K}[x]$ such that either $a=0$ or else $\mathrm{deg}(a)<\mathrm{deg}(b)$ and $\mathrm{gcd}(a,b)=1=\mathrm{gcd}(x,b)$. The reasoning behind our choice of subscripts $\infty$ and $\mathcal{T}$ for the Laurent polynomial component of $f$ and its complement will become apparent in the sequel. 

\begin{lem}\label{lem:rational-decomposition} The $\mathbb{K}$-linear decomposition $\mathbb{K}(x)\simeq \mathbb{K}[x,x^{-1}]\oplus \mathbb{K}(x)_\mathcal{T}$ given by $f\leftrightarrow f_{\infty}\oplus f_\mathcal{T}$ as in \eqref{eq:f-decomposition} is $\sigma$-stable. For $f,g\in\mathbb{K}(x)$ and for $\lambda\in\mathbb{Z}$, $f=\Delta_\lambda(g)$ if and only if $f_{\infty}=\Delta_\lambda(g_{\infty})$ and $f_\mathcal{T}=\Delta_\lambda(g_\mathcal{T})$.
\end{lem}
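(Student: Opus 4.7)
The plan is to verify that $\sigma$ preserves each summand in the decomposition $\mathbb{K}(x) \simeq \mathbb{K}[x,x^{-1}] \oplus \mathbb{K}(x)_{\mathcal{T}}$ and then deduce the ``if and only if'' statement by uniqueness of the decomposition.

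First I would check $\sigma$-stability of $\mathbb{K}[x,x^{-1}]$. This is immediate: if $h \in \mathbb{K}[x,x^{-1}]$ is a Laurent polynomial, then $\sigma(h) = h(x^p)$ is still a Laurent polynomial, since replacing $x$ by $x^p$ preserves the property of being a finite $\mathbb{K}$-linear combination of integer powers of $x$.

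Next I would check $\sigma$-stability of $\mathbb{K}(x)_{\mathcal{T}}$. Given $f_{\mathcal{T}} = a/b$ with $a,b \in \mathbb{K}[x]$ satisfying $\mathrm{deg}(a) < \mathrm{deg}(b)$, $\gcd(a,b) = 1$, and $\gcd(x,b) = 1$ (or else $a=0$), I need to show $\sigma(f_{\mathcal{T}}) = a(x^p)/b(x^p)$ still satisfies these three conditions. The degree condition is immediate because substitution multiplies degrees by $p$, giving $\mathrm{deg}(a(x^p)) = p\,\mathrm{deg}(a) < p\,\mathrm{deg}(b) = \mathrm{deg}(b(x^p))$. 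For the coprimality $\gcd(a(x^p), b(x^p)) = 1$, I would use a B\'ezout identity $ua + vb = 1$ for some $u,v \in \mathbb{K}[x]$, and substitute $x \mapsto x^p$ to get $u(x^p)a(x^p) + v(x^p)b(x^p) = 1$. Finally, $\gcd(x,b) = 1$ means $b(0) \neq 0$, and since $b(x^p)$ evaluated at $x=0$ is also $b(0) \neq 0$, we get $\gcd(x,b(x^p)) = 1$ as well.

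With $\sigma$-stability of both summands established, the operator $\Delta_\lambda = p^\lambda \sigma - \mathrm{id}$ also preserves each summand, since both $\sigma$ and $\mathrm{id}$ do. Writing $g = g_{\infty} + g_{\mathcal{T}}$ then gives $\Delta_\lambda(g) = \Delta_\lambda(g_{\infty}) + \Delta_\lambda(g_{\mathcal{T}})$, where the first summand lies in $\mathbb{K}[x,x^{-1}]$ and the second in $\mathbb{K}(x)_{\mathcal{T}}$. By uniqueness of the decomposition of $\Delta_\lambda(g)$ as $\Delta_\lambda(g)_{\infty} + \Delta_\lambda(g)_{\mathcal{T}}$, we conclude $\Delta_\lambda(g)_{\infty} = \Delta_\lambda(g_{\infty})$ and $\Delta_\lambda(g)_{\mathcal{T}} = \Delta_\lambda(g_{\mathcal{T}})$. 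Therefore $f = \Delta_\lambda(g)$ if and only if $f_{\infty} = \Delta_\lambda(g_{\infty})$ and $f_{\mathcal{T}} = \Delta_\lambda(g_{\mathcal{T}})$.

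Overall the proof is essentially a routine bookkeeping exercise; no step presents a real obstacle. The most substantive detail is the verification that $\gcd(a(x^p), b(x^p)) = 1$, but the B\'ezout-identity argument handles this cleanly without requiring any factorization-theoretic analysis (e.g., knowing how irreducible factors of $b$ behave under $x \mapsto x^p$).
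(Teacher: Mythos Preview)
Your proof is correct. The paper itself states this lemma without proof (it is treated as routine, in line with the analogous result in the earlier work \cite{arreche-zhang:2022}), so there is no ``paper's own proof'' to compare against; your argument supplies exactly the kind of direct verification one would expect, and each step is sound.
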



\subsection{Mahler trajectories, Mahler trees, and Mahler~cycles}\label{sec:trajectories-trees-cycles}

We let \[\mathcal{P}:=\{p^n \ | \ n\in\mathbb{Z}_{\geq 0}\}\] denote the multiplicative monoid of non-negative powers of $p$. Then $\mathcal{P}$ acts on $\mathbb{Z}$ by multiplication, and the set of \emph{maximal trajectories} for this action is \[\mathbb{Z}/\mathcal{P} := \bigl\{\{0\}\bigr\} \cup \bigl\{\{ip^n \ | \ n\in\mathbb{Z}_{\geq 0}\} \ \big| \ i\in\mathbb{Z} \ \text{such that} \ p\nmid i \bigr\}.\]

\begin{defn}\label{defn:trajectory-projection}
For a maximal trajectory $\theta\in\mathbb{Z}/\mathcal{P}$, we let \begin{equation}\label{eq:trajectory-subspace} \textstyle\mathbb{K}[x,x^{-1}]_\theta := \left\{\sum_j c_j x^j\in\mathbb{K}[x,x^{-1}] \ \middle| \ c_j=0 \ \text{for all} \ j \notin \theta\right\},\end{equation} and call it the $\theta$\emph{-subspace}. The $\theta$\emph{-component} $f_\theta$ of $f\in\mathbb{K}(x)$ is the projection of the component $f_{\infty}$ of $f$ in \eqref{eq:f-decomposition} to $\mathbb{K}[x,x^{-1}]_\theta$ as in~\eqref{eq:trajectory-subspace}.\end{defn}

We obtain similarly as in \cite[Lem.~2.3]{arreche-zhang:2022} 
the following result.

\begin{lem}\label{lem:trajectory-projection} For $f,g\in\mathbb{K}(x)$ and for $\lambda\in\mathbb{Z}$, $f_{\infty}=\Delta_\lambda(g_{\infty})$ if and only if $f_\theta=\Delta_\lambda(g_\theta)$ for every maximal trajectory $\theta\in\mathbb{Z}/\mathcal{P}$.\end{lem}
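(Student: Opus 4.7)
The plan is to leverage the direct-sum decomposition of Laurent polynomials by trajectories and the fact that $\sigma$ (hence $\Delta_\lambda$) respects it.

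First I would observe that, by Definition~\ref{defn:trajectory-projection} together with the fact that the maximal trajectories $\theta\in\mathbb{Z}/\mathcal{P}$ partition $\mathbb{Z}$, we have a $\mathbb{K}$-linear direct-sum decomposition
\[
\mathbb{K}[x,x^{-1}]=\bigoplus_{\theta\in\mathbb{Z}/\mathcal{P}}\mathbb{K}[x,x^{-1}]_\theta,
\]
where for any $h\in\mathbb{K}[x,x^{-1}]$ the component $h_\theta$ is uniquely determined as the projection onto $\mathbb{K}[x,x^{-1}]_\theta$. In particular, applying this to $f_\infty$ recovers the $f_\theta$ of Definition~\ref{defn:trajectory-projection}.

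Next I would verify that each $\theta$-subspace is $\sigma$-stable (and hence $\Delta_\lambda$-stable for every $\lambda\in\mathbb{Z}$). This is the crux, but it is immediate from the definition: if $j\in\theta$, then $\sigma(x^j)=x^{jp}$, and $jp\in\theta$ because $\theta$ is by construction closed under multiplication by $p$ (for $\theta=\{0\}$ this is trivial; for $\theta=\{ip^n\mid n\geq 0\}$ with $p\nmid i$, multiplying $ip^n$ by $p$ gives $ip^{n+1}\in\theta$). Consequently $\sigma\bigl(\mathbb{K}[x,x^{-1}]_\theta\bigr)\subseteq \mathbb{K}[x,x^{-1}]_\theta$, and since $\mathrm{id}$ also preserves each summand, so does $\Delta_\lambda=p^\lambda\sigma-\mathrm{id}$.

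With these two ingredients the lemma essentially writes itself. Writing $g_\infty=\sum_\theta g_\theta$ and applying $\Delta_\lambda$ term-by-term gives
\[
\Delta_\lambda(g_\infty)=\sum_{\theta\in\mathbb{Z}/\mathcal{P}}\Delta_\lambda(g_\theta),
\]
and by $\sigma$-stability each summand on the right lies in $\mathbb{K}[x,x^{-1}]_\theta$. By the uniqueness of the trajectory decomposition, this must coincide componentwise with $f_\infty=\sum_\theta f_\theta$, giving $f_\theta=\Delta_\lambda(g_\theta)$ for each $\theta$. Conversely, if $f_\theta=\Delta_\lambda(g_\theta)$ for each $\theta$, summing over $\theta\in\mathbb{Z}/\mathcal{P}$ yields $f_\infty=\Delta_\lambda(g_\infty)$.

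I do not anticipate a genuine obstacle: the only real content is the $\sigma$-stability of the $\theta$-subspaces, which is baked into the definition of a $\mathcal{P}$-trajectory. The one point to be mildly careful about is that $\mathcal{P}$ acts as a monoid (not a group), so the trajectories are forward orbits under multiplication by $p$; this is exactly what is needed to make $\sigma$ (which moves exponents forward along trajectories) preserve each $\mathbb{K}[x,x^{-1}]_\theta$, and no inverse direction is required since $\Delta_\lambda$ only involves $\sigma$ and $\mathrm{id}$.
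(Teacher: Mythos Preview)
Your proposal is correct and is exactly the natural argument: the direct-sum decomposition of $\mathbb{K}[x,x^{-1}]$ by trajectories together with the $\sigma$-stability of each $\mathbb{K}[x,x^{-1}]_\theta$ immediately gives both directions. The paper itself does not supply a proof but simply cites the analogous \cite[Lem.~2.3]{arreche-zhang:2022}, so your argument is precisely the one being invoked.
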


\begin{defn} \label{defn:trees} We denote by $\mathcal{T}$ the set of equivalence classes in $\mathbb{K}^\times$ for the equivalence relation $\alpha\sim \gamma\Leftrightarrow\alpha^{p^r}=\gamma^{p^s}$ for some $r,s\in\mathbb{Z}_{\geq 0}$. For $\alpha\in\mathbb{K}^\times$, we denote by $\tau(\alpha)\in\mathcal{T}$ the equivalence class of $\alpha$ under $\sim$. The elements $\tau\in\mathcal{T}$ are called \emph{Mahler trees}.
\end{defn}

We refer to \cite[Remark~2.7]{arreche-zhang:2022} for a brief discussion on our choice of nomenclature in Definition~\ref{defn:trees}.

\begin{defn}\label{defn:tree-projection}
For a Mahler tree $\tau\in\mathcal{T}$, the $\tau$\emph{-subspace} is \begin{equation}\label{eq:tree-subspace}\mathbb{K}(x)_\tau := \bigl\{f_\mathcal{T}\in \mathbb{K}(x)_\mathcal{T} \ \big| \ \text{every pole of} \ f_\mathcal{T} \ \text{is contained in} \ \tau \}.\end{equation} For $f\in\mathbb{K}(x)$, the $\tau$\emph{-component} $f_\tau$ of $f$ is the projection of the component $f_\mathcal{T}$ of $f$ in \eqref{eq:f-decomposition} to the $\tau$-subspace $\mathbb{K}(x)_\tau$ in \eqref{eq:tree-subspace}.\end{defn}

The following result is proved similarly as in \cite[Lem.~2.12]{arreche-zhang:2022}.

\begin{lem}\label{lem:tree-decomposition} For $f,g\in\mathbb{K}(x)$ and for $\lambda\in\mathbb{Z}$, $f_\mathcal{T}=\Delta_\lambda(g_\mathcal{T})$ if and only if $f_\tau=\Delta_\lambda(g_\tau\!)$ for every Mahler tree $\tau\in\mathcal{T}$.
\end{lem}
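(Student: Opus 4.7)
The plan is to reduce the statement to a diagonal action on a direct sum decomposition. First, I would observe that by partial fraction decomposition over the algebraically closed field $\mathbb{K}$, every element of $\mathbb{K}(x)_\mathcal{T}$ decomposes uniquely as a finite sum grouped according to the Mahler tree containing each pole, yielding a $\mathbb{K}$-linear isomorphism
\[\mathbb{K}(x)_\mathcal{T} \;\simeq\; \bigoplus_{\tau \in \mathcal{T}} \mathbb{K}(x)_\tau, \qquad f_\mathcal{T} \longleftrightarrow (f_\tau)_{\tau \in \mathcal{T}}.\]
It then suffices to show that $\Delta_\lambda = p^\lambda \sigma - \mathrm{id}$ acts diagonally on this decomposition, i.e.\ restricts to each summand $\mathbb{K}(x)_\tau$; the lemma then falls out by comparing $\tau$-components on each side.

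The key step, which is the only substantive point in the argument, is verifying the $\sigma$-stability $\sigma(\mathbb{K}(x)_\tau) \subseteq \mathbb{K}(x)_\tau$ for every tree $\tau\in\mathcal{T}$. Given $h = a/b \in \mathbb{K}(x)_\tau$ in lowest terms with $\deg(a)<\deg(b)$ and $\gcd(a,b)=\gcd(x,b)=1$, the image $\sigma(h) = a(x^p)/b(x^p)$ has $\deg(a(x^p)) < \deg(b(x^p))$; moreover any common root of $a(x^p)$ and $b(x^p)$, or of $x$ and $b(x^p)$, would upon raising to the $p$-th power produce a corresponding common root that contradicts one of the coprimality hypotheses on $h$. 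Hence $\sigma(h) \in \mathbb{K}(x)_\mathcal{T}$. Finally, every pole $\beta$ of $\sigma(h)$ satisfies $\beta^p = \alpha$ for some pole $\alpha$ of $h$, and since $\alpha\in\tau$ the relation $\beta^{p}=\alpha^{p^0}$ places $\beta\sim\alpha$ by Definition~\ref{defn:trees}, so $\beta\in\tau$ and thus $\sigma(h)\in\mathbb{K}(x)_\tau$ as required.

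With $\sigma$-stability in hand, both directions of the lemma are straightforward. For the forward implication, I would project $f_\mathcal{T} = \Delta_\lambda(g_\mathcal{T})$ onto each $\mathbb{K}(x)_\tau$ and use that this projection commutes with both $\sigma$ and $\mathrm{id}$ to read off $f_\tau = \Delta_\lambda(g_\tau)$. For the reverse implication, I would sum the identities $f_\tau = \Delta_\lambda(g_\tau)$ over $\tau$ (a finite sum, since only finitely many components are nonzero) and invoke $\mathbb{K}$-linearity of $\Delta_\lambda$. The scalar $p^\lambda$ plays no essential role here since it commutes with the tree-component projections, so the whole argument really parallels the untwisted case treated in \cite[Lem.~2.12]{arreche-zhang:2022}; I do not foresee any serious obstacle beyond the mechanical $\sigma$-stability check above.
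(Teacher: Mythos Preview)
Your proposal is correct and follows exactly the approach the paper indicates: the paper does not spell out a proof but simply states that the result ``is proved similarly as in \cite[Lem.~2.12]{arreche-zhang:2022},'' which is precisely the untwisted argument you have reproduced with the harmless scalar $p^\lambda$ inserted. Your $\sigma$-stability verification for each $\mathbb{K}(x)_\tau$ and the resulting diagonal action of $\Delta_\lambda$ on the direct sum $\bigoplus_{\tau\in\mathcal{T}}\mathbb{K}(x)_\tau$ is the intended mechanism.
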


\begin{defn} \label{defn:cycles} For a Mahler tree $\tau\in\mathcal{T}$, the (possibly empty) \emph{Mahler cycle} of $\tau$ is \[\mathcal{C}(\tau):=\{\gamma \in \tau \ | \ \gamma \ \text{is a root of unity of order coprime to } p\}.\] The (possibly zero) \emph{cycle length} of $\tau$ is defined to be $\varepsilon(\tau):=|\mathcal{C}(\tau)|$.

For $e\in\mathbb{Z}_{\geq 0}$, we write $\mathcal{T}_e:=\{\tau\in \mathcal{T} \ | \ \varepsilon(\tau)=e\}$. We refer to $\mathcal{T}_0$ as the set of \emph{non-torsion Mahler trees}, and to $\mathcal{T}_+:=\mathcal{T}-\mathcal{T}_0$ as the set of \emph{torsion Mahler trees}.
\end{defn}

\begin{rem}\label{rem:cycle-facts} Let us collect as in \cite[Rem.~2.10]{arreche-zhang:2022} some immediate observations about Mahler cycles that we shall use, and refer to, throughout the sequel.

For $\tau\in\mathcal{T}$ it follows from the Definition~\ref{defn:trees} that either $\tau\subset\mathbb{K}^\times_t$ or else $\tau\cap\mathbb{K}^\times_t=\emptyset$ (that is, either $\tau$ consists entirely of roots of unity or else $\tau$ contains no roots of unity at all). In particular, $\tau\cap\mathbb{K}^\times_t=\emptyset\Rightarrow\mathcal{C}(\tau)=\emptyset\Leftrightarrow \varepsilon(\tau)=0\Leftrightarrow \tau\in\mathcal{T}_0$ (the \emph{non-torsion case}).

On the other hand, $\mathbb{K}^\times_t$ consists of the pre-periodic points for the action of the monoid $\mathcal{P}$ on $\mathbb{K}^\times$ given by $\alpha\mapsto \alpha^{p^n}$ for $n\in\mathbb{Z}_{\geq 0}$. For $\tau\subset\mathbb{K}^\times_t$ (the \emph{torsion case}), the Mahler cycle $\mathcal{C}(\tau)$ is a non-empty set endowed with a simply transitive action of the quotient monoid $\mathcal{P}/\mathcal{P}^e\simeq\mathbb{Z}/e\mathbb{Z}$, where $\mathcal{P}^e:=\{p^{ne} \ | \ n\in\mathbb{Z}\}$, and $e:=\varepsilon(\tau)$. We emphasize that in general $\mathcal{C}(\tau)$ is only a set, and not a group. The Mahler tree $\tau(1)$ consists precisely of the roots of unity $\zeta\in\mathbb{K}^\times_t$ whose order $r$ is such that $\mathrm{gcd}(r,p^n)=r$ for some $p^n\in\mathcal{P}$, or equivalently such that every prime factor of $r$ divides $p$. When $\tau\subset\mathbb{K}^\times_t$ but $\tau\neq \tau(1)$, the cycle length $\varepsilon(\tau)=e$ is the order of $p$ in the group of units $(\mathbb{Z}/r\mathbb{Z})^\times$, where $r>1$ is the common order of the roots of unity $\gamma\in\mathcal{C}(\tau)$, and $\mathcal{C}(\tau)=\{\gamma^{p^\ell} \ | \ 0\leq \ell\leq e-1\}$ for any given $\gamma\in\mathcal{C}(\tau)$. We shall often abusively write $\mathcal{C}(\tau)=\{\gamma^{p^\ell} \ | \ \ell\in\mathbb{Z}/e\mathbb{Z}\}$.
\end{rem}


\subsection{Mahler supports and singular supports in Mahler trees}\label{sec:forest-support}

Mahler trees allow us to define the following bespoke variants of the singular support $\mathrm{sing}(f)$ of a rational function $f$ (\ie, its set of poles) and the order $\mathrm{ord}_\alpha(f)$ of a pole of $f$ at $\alpha\in\mathbb{K}$,
which are particularly well-suited to the Mahler context.

\begin{defn} \label{defn:supp} For $f\in \mathbb{K}(x)$, we define $\mathrm{supp}(f)\subset \mathcal{T}\cup\{\infty\}$, called the \emph{Mahler support} of $f$, as follows:
\begin{itemize}
\item $\infty\in\mathrm{supp}(f)$ if and only if $f_{\infty}\neq 0$; and
\item for $\tau\in\mathcal{T}$, $\tau\in\mathrm{supp}(f)$ if and only if $\tau$ contains a pole of $f$.
\end{itemize}

For $\tau\in\mathcal{T}$, the \emph{singular support} of $f$ in $\tau$, denoted by $\mathrm{sing}(f,\tau)$, is the (possibly empty) set of poles of $f$ contained in $\tau$, and the \emph{order} of $f$ at $\tau$ is \[\mathrm{ord}(f,\tau):=\mathrm{max}\bigl(\{0\}\cup\left\{\mathrm{ord}_\alpha(f) \ \middle| \ \alpha\in\mathrm{sing}(f,\tau)\right\}\bigr).\]
\end{defn}

For the sake of completeness, we include the straightforward proof of the following lemma, which was omitted from \cite[Section~2.2]{arreche-zhang:2022} for lack of space.

\begin{lem}\label{lem:stable-support} For $f,g\in\mathbb{K}(x)$, $\tau\in\mathcal{T}$, $\lambda\in\mathbb{Z}$, and $0\neq c\in\mathbb{K}$, we have the following:
\begin{enumerate}
\item $\mathrm{supp}(f)=\emptyset\Longleftrightarrow f=0$;
\item $\mathrm{supp}(\sigma(f))=\mathrm{supp}(f)=\mathrm{supp}(c\cdot f)$; and
\item $\mathrm{supp}(f+g)\subseteq \mathrm{supp}(f)\cup\mathrm{supp}(g)$.
\item $\tau\in\mathrm{supp}(\Delta_\lambda(g)) \Longleftrightarrow \tau\in\mathrm{supp}(g)$;
\item $\mathrm{ord}(\sigma(f),\tau)=\mathrm{ord}(f,\tau)=\mathrm{ord}(c\cdot f,\tau)$;
\item $\mathrm{ord}(f+g,\tau)\leq\mathrm{max}\bigl(\mathrm{ord}(f,\tau),\mathrm{ord}(g,\tau)\bigr)$; and
\item $\mathrm{ord}(\Delta_\lambda(g),\tau)=\mathrm{ord}(g,\tau)$.
\end{enumerate}
\end{lem}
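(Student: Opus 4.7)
The plan is to dispatch parts (1), (2), (3), (5), (6) as straightforward consequences of the definitions and the preliminaries, and then to focus the main effort on (7), from which (4) will follow automatically. Part (1) reduces to the observation that if $\mathrm{supp}(f)=\emptyset$, then $f_\infty=0$ and $f$ has no finite poles; since every $\alpha\in\mathbb{K}^\times$ belongs to some Mahler tree by Definition~\ref{defn:trees}, we conclude that $f_\mathcal{T}$ has no poles, hence by its normalized form $f_\mathcal{T}=0$, so $f=0$. For parts (2) and (5), scaling by $c\in\mathbb{K}^\times$ leaves every pole of $f$ and its $\infty$-component untouched; as for $\sigma$, the $\sigma$-stability of $\mathbb{K}[x,x^{-1}]\oplus\mathbb{K}(x)_\mathcal{T}$ in Lemma~\ref{lem:rational-decomposition} handles $f_\infty$, while $\alpha$ and any $p$-th root of $\alpha$ always lie in the same Mahler tree by construction of $\mathcal{T}$, so poles of $\sigma(f)$ in $\tau$ correspond exactly to poles of $f$ in $\tau$ with matching orders. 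Parts (3) and (6) are then standard partial-fraction bookkeeping.

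The heart of the proof is part (7). The inequality $\mathrm{ord}(\Delta_\lambda(g),\tau)\leq\mathrm{ord}(g,\tau)$ follows immediately from (5) and (6). For the reverse, set $n:=\mathrm{ord}(g,\tau)$ and assume $n>0$ (the case $n=0$ is trivial). Let $S\subset\tau$ denote the finite nonempty set of poles of $g$ of order exactly $n$ that lie in $\tau$. The key step is a pigeonhole argument: the set $\widetilde{S}:=\{\beta\in\mathbb{K}^\times:\beta^p\in S\}$ is contained in $\tau$ by Definition~\ref{defn:trees}, and $|\widetilde{S}|=p|S|$ since each nonzero element of $S$ has exactly $p$ distinct $p$-th roots in $\mathbb{K}$. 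Because $p\geq 2$, we may select $\beta\in\widetilde{S}\setminus S$. To compute the coefficient of $(x-\beta)^{-n}$ in $\Delta_\lambda(g)=p^\lambda\sigma(g)-g$, note that the $-g$-contribution vanishes because $\beta\notin S$, while using $x^p-\beta^p=p\beta^{p-1}(x-\beta)+O((x-\beta)^2)$ near $x=\beta$ the $p^\lambda\sigma(g)$-contribution equals $p^\lambda(p\beta^{p-1})^{-n}$ times the coefficient of $(x-\beta^p)^{-n}$ in $g$, which is nonzero since $\beta^p\in S$. Therefore $\Delta_\lambda(g)$ has a pole of order exactly $n$ at $\beta\in\tau$, giving $\mathrm{ord}(\Delta_\lambda(g),\tau)\geq n$.

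Part (4) is then immediate from (7), since for $\tau\in\mathcal{T}$ we have $\tau\in\mathrm{supp}(h)$ if and only if $\mathrm{ord}(h,\tau)>0$. The step I expect to be the main obstacle is precisely the pigeonhole argument locating $\beta\in\tau\setminus S$ with $\beta^p\in S$. A direct count bypasses any case analysis between torsion and non-torsion Mahler trees, where one might otherwise fear that consistent cancellation of the leading partial-fraction coefficient around a Mahler cycle could force $\mathrm{ord}(\Delta_\lambda(g),\tau)<\mathrm{ord}(g,\tau)$ for certain values of $\lambda$; the fact that the $p$-fold branching of Mahler trees at every vertex provides enough room to escape any finite $S$ crucially uses $p\geq 2$.
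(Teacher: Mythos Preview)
Your proof is correct and follows essentially the same approach as the paper's own proof. In particular, both arguments hinge on the same pigeonhole step for (7): the $p$-th roots of the maximal-order poles of $g$ in $\tau$ form a set of size $p|S|>|S|$, so at least one such root $\beta$ is not itself a maximal-order pole of $g$, and then $\Delta_\lambda(g)$ inherits a pole of the full order $n$ at $\beta$ from the $p^\lambda\sigma(g)$ term with no cancellation from $-g$. The paper phrases this by listing the maximal-order poles as $\alpha_1,\dots,\alpha_s$ and their $p$-th roots as $\zeta_p^i\gamma_j$, but the content is identical; the paper also proves (4) and (7) together rather than deducing (4) from (7), which is a cosmetic difference only.
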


\begin{proof}
(1).~$f=0\Longleftrightarrow f_{\infty}=0$ and $f_\mathcal{T}=0$, and $f_\mathcal{T}=0\Longleftrightarrow f$ has no poles in $\mathbb{K}^\times$.

(2) and (5).~For $0\neq c\in \mathbb{K}$, $cf_{\infty}\neq 0$ if and only if $f_{\infty}\neq 0$, and $f$ and $cf$ have the same poles and the orders of these poles are the same, and therefore $\mathrm{supp}(f)=\mathrm{supp}(cf)$ and $\mathrm{ord}(f,\tau)=\mathrm{ord}(cf,\tau)$ for every $\tau\in\mathcal{T}$. Moreover, $\sigma(f_{\infty})\neq 0$ if and only if $f_{\infty}\neq 0$, since $\sigma$ is an injective endomorphism of $\K(x)$, and $\alpha\in\mathbb{K}^\times$ is a pole of $\sigma(f)$ if and only if $\alpha^p$ is a pole of $f$, whence $\tau$ contains a pole of $f$ if and only if $\tau$ contains a pole of $\sigma(f)$. In this case, it is clear that $\mathrm{ord}(\sigma(f),\tau)\leq\mathrm{ord}(f,\tau)$. Moreover, since $f$ has only finitely many poles in $\tau$ of maximal order $m:=\mathrm{ord}(f,\tau)$, there exists $\alpha\in\mathrm{sing}(\sigma(f),\tau)$ such that $\mathrm{ord}_{\alpha^p}(f)=m>\mathrm{ord}_\alpha(f)$, and it follows that $\mathrm{ord}_\alpha(\sigma(f))=m=\mathrm{ord}(\sigma(f),\tau)$.

(3) and (6).~If $f_{\infty}+g_{\infty}\neq 0$ then at least one of $f_{\infty}\neq 0$ or $g_{\infty}\neq 0$. The set of poles of $f+g$ is contained in the union of the set of poles of $f$ and the set of poles of $g$, and therefore if $\tau$ contains a pole of $f+g$ then $\tau$ must contain a pole of $f$ or a pole of $g$. This shows that $\mathrm{supp}(f+g)\subseteq\mathrm{supp}(f)\cup\mathrm{supp}(g)$. For $m$ the maximal order of a pole of $f+g$ in $\tau$ we see that at least one of $f$ or $g$ must contain a pole of order $m$ in $\tau$. This shows that $\mathrm{ord}(f+g,\tau)\leq\max(\mathrm{ord}(f,\tau),\mathrm{ord}(g,\tau))$.

(4) and (7).~By~(2) and (3), $\mathrm{supp}(\Delta_\lambda(g))\subseteq\mathrm{supp}(g)$, and by~(5) and~(6), $\mathrm{ord}(\Delta_\lambda(g),\tau)\leq\mathrm{ord}(g,\tau)$. Suppose $\tau\in\mathrm{supp}(g)$, and let $\alpha_1,\dots,\alpha_s\in\mathrm{sing}(g,\tau)$ be all the elements, pairwise distinct, with $\mathrm{ord}_{\alpha_j}(g)=\mathrm{ord}(g,\tau)=:m\geq 1$, and choose $\gamma_j\in \tau$ such that $\gamma_j^p=\alpha_j$, we find as in the proof of (5) that $\mathrm{ord}_{\zeta_p^i\gamma_j}(\sigma(g))=m$ and the elements $\zeta_p^i\gamma_j$ are pairwise distinct for $0\leq i\leq p-1$ and $1\leq j\leq s$, whence at least one of the $\zeta_p^i\gamma_j$ is different from every $\alpha_{j'}$ for $1\leq j'\leq s$, and therefore $\mathrm{ord}(\Delta_\lambda(g),\tau)=m$, which implies in particular that $\tau\in\mathrm{supp}(\Delta_\lambda(g))$.
\end{proof}


\subsection{Mahler dispersion}\label{sec:dispersion}

We now recall from \cite{arreche-zhang:2022} the following Mahler variant of the notion of (polar) dispersion used in \cite{ChenSinger2012}, following the original definitions in \cite{Abramov:1971,Abramov:1974}.

\begin{defn} \label{defn:dispersion} For $f\in \mathbb{K}(x)$ and $\tau\in\mathrm{supp}(f)$, the \emph{Mahler dispersion} of $f$ at $\tau$, denoted by $\mathrm{disp}(f,\tau)$, is defined as follows.

If $\tau\in \mathcal{T}$, $\mathrm{disp}(f,\tau)$ is the largest $d\in\mathbb{Z}_{\geq 0}$ (if it exists) for which there exists $\alpha\in\mathrm{sing}(f,\tau)$ such that $\alpha^{p^d}\in\mathrm{sing}(f,\tau)$. If there is no such $d\in\mathbb{Z}_{\geq 0}$, then we set $\mathrm{disp}(f,\tau)=\infty$.

If $\tau=\infty$, let us write $f_{\infty}=\sum_{i=n}^Nc_ix^i\in\mathbb{K}[x,x^{-1}]$ with $c_nc_N\neq 0$.
\begin{itemize}
\item If $f_{\infty}=c_0\neq 0$ then we set $\mathrm{disp}(f,\infty)=0$; otherwise
\item $\mathrm{disp}(f,\infty)$ is the largest $d\in\mathbb{Z}_{\geq 0}$ for which there exists an index $i\neq 0$ such that $c_i\neq 0$ and $c_{ip^d}\neq 0$.
\end{itemize} 
For $f\in\mathbb{K}(x)$ and $\tau\in\mathcal{T}\cup\{\infty\}$ such that $\tau\notin\mathrm{supp}(f)$, we do not define $\mathrm{disp}(f,\tau)$ at all (cf.~\cite{Abramov:1971,Abramov:1974,ChenSinger2012}).
\end{defn}

Similarly as in the shift and $q$-difference cases (cf.~\cite[Lemma 6.3]{HardouinSinger2008} and \cite[Lemma~2.4 and Lemma~2.9]{ChenSinger2012}), Mahler dispersions will play a crucial role in what follows. As we prove in Theorem~\ref{thm:summable-dispersion}, they already provide a partial obstruction to summability: if $f\in\mathbb{K}(x)$ is $\lambda$-Mahler summable then almost every Mahler dispersion of $f$ is non-zero. Moreover, Mahler dispersions also detect whether $f$ has any ``bad'' poles (\ie, at roots of unity of order coprime to $p$) according to the following result proved in \cite[Lem.~2.16]{arreche-zhang:2022}.

\begin{lem}[\protect{\cite[Lem.~2.16]{arreche-zhang:2022}}] \label{lem:infinite-dispersion} Let $f\in\mathbb{K}(x)$ and $\tau\in\mathrm{supp}(f)$. Then $\mathrm{disp}(f,\tau)=\infty$ if and only if $\mathrm{sing}(f,\tau)\cap\mathcal{C}(\tau)\neq \emptyset$.
\end{lem}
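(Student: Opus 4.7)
First observe that only the case $\tau\in\mathcal{T}$ requires argument: if $\tau=\infty$, then $f_{\infty}\in\mathbb{K}[x,x^{-1}]$ has only finitely many nonzero coefficients $c_i$, so Definition~\ref{defn:dispersion} forces $\mathrm{disp}(f,\infty)<\infty$ automatically, and moreover $\mathcal{C}(\tau)$ is only defined for $\tau\in\mathcal{T}$. So assume $\tau\in\mathcal{T}$ throughout, and recall that $\mathrm{sing}(f,\tau)$ is finite.

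For the $(\Leftarrow)$ direction, suppose $\gamma\in\mathrm{sing}(f,\tau)\cap\mathcal{C}(\tau)$. By Remark~\ref{rem:cycle-facts}, $\gamma$ has finite order coprime to $p$, and the monoid $\mathcal{P}$ acts on $\mathcal{C}(\tau)$ through its finite quotient $\mathcal{P}/\mathcal{P}^{\varepsilon(\tau)}$; concretely $\gamma^{p^{n\varepsilon(\tau)}}=\gamma$ for every $n\geq 0$. Thus $d=n\varepsilon(\tau)$ is an arbitrarily large integer with $\gamma\in\mathrm{sing}(f,\tau)$ and $\gamma^{p^d}=\gamma\in\mathrm{sing}(f,\tau)$, so $\mathrm{disp}(f,\tau)=\infty$.

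For the $(\Rightarrow)$ direction, suppose $\mathrm{disp}(f,\tau)=\infty$. Then the set of pairs $(\alpha,d)$ with $\alpha\in\mathrm{sing}(f,\tau)$ and $\alpha^{p^d}\in\mathrm{sing}(f,\tau)$ contains elements with $d$ arbitrarily large. Since $\mathrm{sing}(f,\tau)$ is finite, pigeonhole produces a single $\alpha\in\mathrm{sing}(f,\tau)$ for which the set $D_\alpha:=\{d\geq 0 \ | \ \alpha^{p^d}\in\mathrm{sing}(f,\tau)\}$ is infinite. Applying pigeonhole once more to the finite set $\{\alpha^{p^d} \ | \ d\in D_\alpha\}\subseteq\mathrm{sing}(f,\tau)$, we obtain $d_1<d_2$ in $D_\alpha$ with $\alpha^{p^{d_1}}=\alpha^{p^{d_2}}$, whence $\alpha^{p^{d_1}(p^{d_2-d_1}-1)}=1$ and $\alpha$ is a root of unity.

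Writing the order of $\alpha$ as $p^ks$ with $\gcd(s,p)=1$, we see that $\alpha^{p^d}$ has order $s$ coprime to $p$ as soon as $d\geq k$, and hence $\alpha^{p^d}\in\mathcal{C}(\tau)$. Since $D_\alpha$ is infinite we may choose $d\in D_\alpha$ with $d\geq k$, and then $\alpha^{p^d}\in\mathrm{sing}(f,\tau)\cap\mathcal{C}(\tau)$, so this intersection is nonempty as claimed. The only mild subtlety is the double application of pigeonhole to go from an \emph{a priori} $d$-dependent witness $\alpha_d$ to a single root of unity $\alpha$, after which the structure of Mahler cycles recorded in Remark~\ref{rem:cycle-facts} finishes the argument.
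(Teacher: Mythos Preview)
Your proof is correct. The paper does not supply its own proof of this lemma but defers to \cite[Lem.~2.16]{arreche-zhang:2022}; your argument is the natural one and is essentially what one finds there: the backward direction is immediate from the periodicity $\gamma^{p^e}=\gamma$ on the cycle, and the forward direction is exactly the double pigeonhole you describe, first to pin down a single $\alpha$ witnessing infinitely many $d$, then to force $\alpha$ to be a root of unity, after which a sufficiently large $d\in D_\alpha$ lands $\alpha^{p^d}$ in $\mathcal{C}(\tau)\cap\mathrm{sing}(f,\tau)$.
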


\subsection{Mahler coefficients}\label{sec:mahler-coefficients}

Here we extend our study of the effect of the Mahler operator $\sigma$ on partial fraction decompositions initiated in \cite[\S2.4]{arreche-zhang:2022}. For $\alpha\in\mathbb{K}^\times$ and $m,k,n\in\mathbb{Z}$ with $n\geq 0$ and $1\leq k \leq m$, we define the \emph{Mahler coefficients} $V^m_{k,n}(\alpha)\in\mathbb{K}$ implicitly by\begin{equation}\label{eq:mahler-coefficients} \sigma^n\left(\frac{1}{(x-\alpha^{p^n})^m}\right)=\frac{1}{(x^{p^n}-\alpha^{p^n})^m}=\sum_{k=1}^m\sum_{i=0}^{p^n-1}\frac{V^m_{k,n}(\zeta_{p^n}^i\alpha)}{(x-\zeta_{p^n}^i\alpha)^k}.\end{equation} These Mahler coefficients are computed explicitly with the following result, proved analogously to the similar \cite[Lem.~2.17]{arreche-zhang:2022} in case $n=1$.

\begin{lem}\label{lem:mahler-coefficients} For every $\alpha\in \mathbb{K}^\times$, the Mahler coefficients \[V^m_{k,n}(\alpha)=\mathbb{V}^m_{k,n}\cdot\alpha^{k-mp^n},\] where the universal coefficients $\mathbb{V}^m_{k,n}\in\mathbb{Q}$ are the first $m$ Taylor coefficients at $x=1$ of \begin{equation}\label{eq:taylor-coefficients}(x^{p^n-1}+\dots+x+1)^{-m}=\sum_{k=1}^m\mathbb{V}^m_{k,n}\cdot(x-1)^{m-k}+O((x-1)^m).\end{equation}\end{lem}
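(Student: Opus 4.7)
The plan is to directly compute the partial fraction expansion of $\tfrac{1}{(x^{p^n}-\alpha^{p^n})^m}$ at each pole $\beta=\zeta_{p^n}^i\alpha$ by reducing to a single Taylor expansion at $t=1$, then read off the coefficients $V^m_{k,n}(\beta)$. The uniform answer $\mathbb{V}^m_{k,n}\cdot\alpha^{k-mp^n}$ will appear naturally because each pole $\beta$ satisfies $\beta^{p^n}=\alpha^{p^n}$, so the computation at $\beta$ is insensitive to which of the $p^n$-th roots one picks, modulo a rescaling by a power of $\beta$.

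First I would factor $x^{p^n}-\alpha^{p^n}=\prod_{j=0}^{p^n-1}(x-\zeta_{p^n}^j\alpha)$ to confirm that the LHS of \eqref{eq:mahler-coefficients} has poles exactly at the $\beta=\zeta_{p^n}^i\alpha$ for $0\leq i\leq p^n-1$, each of order $m$. Fixing one such $\beta$, I would perform the substitution $t=x/\beta$ and use $\beta^{p^n}=\alpha^{p^n}$ to rewrite
\[
x^{p^n}-\alpha^{p^n} \;=\; \alpha^{p^n}(t^{p^n}-1)\;=\;\alpha^{p^n}(t-1)\bigl(t^{p^n-1}+\dots+t+1\bigr).
\]
Since $t-1=(x-\beta)/\beta$, raising to the $m$-th power gives
\[
\frac{1}{(x^{p^n}-\alpha^{p^n})^m} \;=\; \frac{\beta^m}{\alpha^{mp^n}(x-\beta)^m}\cdot\bigl(t^{p^n-1}+\dots+t+1\bigr)^{-m}.
\]

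Next I would extract the coefficients of the principal part at $x=\beta$ from the Taylor expansion \eqref{eq:taylor-coefficients} of $H(t):=(t^{p^n-1}+\cdots+t+1)^{-m}$ at $t=1$ (note that $H$ is regular at $t=1$ since $p^n\neq 0$ in $\mathbb{Q}$). Substituting $(t-1)^{m-k}=(x-\beta)^{m-k}/\beta^{m-k}$ and multiplying through by $\beta^m/\alpha^{mp^n}(x-\beta)^m$, the terms of order $(x-\beta)^{m-k}$ in $H(x/\beta)$ contribute exactly
\[
\frac{\mathbb{V}^m_{k,n}\,\beta^m}{\alpha^{mp^n}\beta^{m-k}}\cdot\frac{1}{(x-\beta)^k} \;=\; \frac{\mathbb{V}^m_{k,n}\,\beta^k}{\alpha^{mp^n}}\cdot\frac{1}{(x-\beta)^k},
\]
while the $O((x-\beta)^m)$ remainder of $H$ contributes a holomorphic function at $x=\beta$. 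By uniqueness of the principal part of the Laurent expansion at $\beta$, we conclude $V^m_{k,n}(\beta)=\mathbb{V}^m_{k,n}\beta^k/\alpha^{mp^n}$. Finally, using $\alpha^{mp^n}=\beta^{mp^n}$ rewrites this as $\mathbb{V}^m_{k,n}\beta^{k-mp^n}$, which is the asserted formula applied to $\beta$ in place of $\alpha$; since $\beta\in\mathbb{K}^\times$ was an arbitrary pole of the LHS of \eqref{eq:mahler-coefficients} and every element of $\mathbb{K}^\times$ is a pole for a suitable choice of $\alpha$, the formula holds for all $\alpha\in\mathbb{K}^\times$.

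There is no essential obstacle here; the only point requiring a bit of care is the bookkeeping in the substitution $t=x/\beta$ and the identification $\beta^{p^n}=\alpha^{p^n}$ that collapses the parameter dependence so that the ``universal'' coefficients $\mathbb{V}^m_{k,n}$ are literally the same for every pole $\beta$. It is also worth noting in passing that the $\mathbb{V}^m_{k,n}$ are rational (rather than merely lying in $\mathbb{K}$) because the series $(x^{p^n-1}+\cdots+x+1)^{-m}$ has rational coefficients in $(x-1)$.
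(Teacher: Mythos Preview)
Your proof is correct and is precisely the kind of direct computation the paper has in mind: the paper does not write out a proof here but refers to the analogous argument for $n=1$ in \cite[Lem.~2.17]{arreche-zhang:2022}, and your substitution $t=x/\beta$ together with the factorization $t^{p^n}-1=(t-1)(t^{p^n-1}+\cdots+1)$ is exactly that argument carried out for general $n$. The bookkeeping and the final observation that every $\beta\in\mathbb{K}^\times$ arises as a pole (so the formula is universal) are handled cleanly.
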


Although Lemma~\ref{lem:mahler-coefficients} serves to compute the $V^m_{k,n}(\alpha)$ for $\alpha\in\mathbb{K}^\times$, $n\in\mathbb{Z}_{\geq 0}$, and $1\leq k \leq m$ efficiently in practice\footnote{That is, by computing successive derivatives of the left-hand side and evaluating at $x=1$.}, the following result provides an explicit symbolic expression for these Mahler coefficients as sums over certain integer partitions.

\begin{defn}\label{defn:partitions} For $k,n\in\mathbb{Z}_{\geq 0}$, let $\Pi_{n}(k)$ be the set of integer partitions $\mu=(\mu_1,\dots,\mu_\ell)$ of $k$ with greatest part $\mu_1<p^n$, and denote by $\ell(\mu):=\ell$ the length of $\mu$ and by $\ell_i(\mu)$ the multiplicity of $i$ in $\mu$ for $1\leq i\leq p^n-1$. We adopt the conventions that $\Pi_n(0)=\{\emptyset\}$ for every $n\geq 0$ and $\Pi_0(k)=\emptyset$ for every $k\geq 1$. The empty partition $\mu=\emptyset$ has length $\ell(\emptyset)=0$ and multiplicity $\ell_i(\emptyset)=0$ for every $1\leq i \leq p^n-1$ (vacuously so when $n=0$).
\end{defn}

\begin{prop}\label{prop:mahler-coefficients} For $n\geq 0$ and $1\leq k \leq m$, \[\mathbb{V}^m_{k,n}=p^{-nm}
\cdot\sum_{\mu\in\Pi_{n}(m-k)}(-p^n)^{-\ell(\mu)}\begin{pmatrix}m-1+\ell(\mu) \\ m-1,\ell_1(\mu),\dots,\ell_{p^n-1}(\mu)\end{pmatrix}\prod_{i=1}^{p^n-1}\begin{pmatrix}p^n \\ i+1\end{pmatrix}^{\ell_i(\mu)}.\]\end{prop}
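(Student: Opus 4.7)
The plan is to compute the Taylor expansion in \eqref{eq:taylor-coefficients} directly, by combining three standard maneuvers: the telescoping identity $1+x+\cdots+x^{p^n-1}=\frac{x^{p^n}-1}{x-1}$, a factorization that isolates the leading constant $p^n$ at $x=1$, and then the generalized binomial and multinomial series. Since Lemma~\ref{lem:mahler-coefficients} has already reduced the task to extracting a Taylor coefficient at $x=1$, no further manipulation of rational functions is needed; the entire argument is a bookkeeping exercise on a formal power series in $y:=x-1$.

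First I would substitute $y=x-1$ and use the binomial theorem on $(1+y)^{p^n}-1$ to rewrite
\[
1+x+\cdots+x^{p^n-1}\;=\;\frac{(1+y)^{p^n}-1}{y}\;=\;p^n\bigl(1-u(y)\bigr),
\]
where
\[
u(y)\;:=\;-\sum_{i=1}^{p^n-1}\frac{1}{p^n}\binom{p^n}{i+1}y^{i}
\]
is a polynomial in $y$ with zero constant term (the leading $\binom{p^n}{1}=p^n$ has been pulled out as the overall factor). Thus
\[
\bigl(1+x+\cdots+x^{p^n-1}\bigr)^{-m}\;=\;p^{-nm}\,(1-u(y))^{-m}\;=\;p^{-nm}\sum_{\ell\geq 0}\binom{m-1+\ell}{\ell}u(y)^{\ell},
\]
which is a well-defined formal power series in $y$ because $u(y)$ has no constant term.

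Then I would apply the multinomial theorem to each power $u(y)^{\ell}$, so that the coefficient of $y^{m-k}$ in the full expansion is a sum over nonnegative integer tuples $(\ell_1,\ldots,\ell_{p^n-1})$ satisfying $\sum_i\ell_i=\ell$ and $\sum_i i\,\ell_i=m-k$. Such tuples are visibly in bijection with partitions $\mu\in\Pi_n(m-k)$ via $\ell_i=\ell_i(\mu)$ and $\ell=\ell(\mu)$. Merging the outer coefficient $\binom{m-1+\ell}{\ell}$ with the multinomial $\binom{\ell}{\ell_1,\ldots,\ell_{p^n-1}}$ produces $\binom{m-1+\ell(\mu)}{m-1,\ell_1(\mu),\ldots,\ell_{p^n-1}(\mu)}$, and the sign $(-1)^{\ell(\mu)}$ together with the factor $p^{-n\ell(\mu)}$ from $u(y)^{\ell(\mu)}$ combine to $(-p^n)^{-\ell(\mu)}$. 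Reading off the coefficient of $(x-1)^{m-k}=y^{m-k}$ and comparing with \eqref{eq:taylor-coefficients} yields the claimed formula.

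The main obstacle is purely notational: lining up the multi-indices $(\ell_1,\ldots,\ell_{p^n-1})$ with the data $(\ell_i(\mu))$ of a partition in $\Pi_n(m-k)$, and verifying the edge cases compatibly with the conventions in Definition~\ref{defn:partitions}. Concretely, when $k=m$ the only contribution is $\mu=\emptyset$, giving $\mathbb{V}^m_{m,n}=p^{-nm}$ (matching the value of the left-hand side of \eqref{eq:taylor-coefficients} at $x=1$); when $n=0$ one has $\Pi_0(m-k)=\emptyset$ for $k<m$, so the formula correctly returns $\mathbb{V}^m_{k,0}=0$ in that range, consistent with $(1)^{-m}=1$.
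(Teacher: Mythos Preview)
Your argument is correct and takes a genuinely different route from the paper. The paper views $(1+x+\cdots+x^{p^n-1})^{-m}$ as the composition $f_m\circ g_n$ with $f_m(x)=x^{-m}$ and $g_n(x)=1+x+\cdots+x^{p^n-1}$, and applies Fa\`a di Bruno's formula directly, computing $f_m^{(\ell)}(p^n)=(-1)^\ell p^{-n(m+\ell)}\frac{(m-1+\ell)!}{(m-1)!}$ and $g_n^{(i)}(1)=i!\binom{p^n}{i+1}$; the restriction to $\Pi_n(m-k)$ emerges because $g_n^{(i)}(1)=0$ for $i\geq p^n$. Your approach instead rewrites $g_n(1+y)=p^n(1-u(y))$ via the telescoping identity, then expands $(1-u)^{-m}$ by the negative binomial series and $u^\ell$ by the multinomial theorem. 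This is more elementary in that it avoids invoking Fa\`a di Bruno as a black box, and the factorization $p^n(1-u(y))$ makes the factor $p^{-nm}$ and the sign $(-p^n)^{-\ell(\mu)}$ appear transparently rather than through derivative computations. The two arguments are of course close cousins --- your negative-binomial-plus-multinomial expansion is effectively a hands-on derivation of the relevant case of Fa\`a di Bruno --- but the presentations differ enough that a reader unfamiliar with Fa\`a di Bruno would find yours more self-contained.
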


\begin{proof} By Lemma~\ref{lem:mahler-coefficients}, $V^m_{k,n}(\alpha)=\mathbb{V}^m_{k,n}\cdot\alpha^{k-mp^n}$, where the $\mathbb{V}^m_{k,n}\in\mathbb{Q}$ are given by \eqref{eq:taylor-coefficients}. Writing $f_m(x)=x^{-m}$ and $g_n(x)=x^{p^n-1}+\dots+x+1$, and letting $W^m_{k,n}\in\mathbb{Q}$ be the coefficient of $(x-1)^k$ in the Taylor expansion of $(f_m\circ g_n)(x)$ at $x=1$ as in Lemma~\ref{lem:mahler-coefficients}, we have that $\mathbb{V}^m_{k,n}=W^m_{m-k,n}$ for every $1\leq k\leq m$. By Fa\`a di Bruno's formula \cite{johnson:2002}, we have \[W^m_{k,n}=\frac{(f_m\circ g_n)^{(k)}(1)}{k!}=\frac{1}{k!}\cdot\sum_{\mu\in\Pi(k)}\frac{k!}{\ell_1(\mu)!\cdots\ell_k(\mu)!}f^{(\ell(\mu))}_m(g_n(1))\prod_{i=1}^k\left(\frac{g_n^{(i)}(1)}{i!}\right)^{\ell_i(\mu)}\] for every $k\geq 0$, where $\Pi(k)$ denotes the set of \emph{all} partitions of $k$, and $\ell(\mu)$ and $\ell_i(\mu)$ are as in Definition~\ref{defn:partitions}. For every $\ell,i\in\mathbb{Z}_{\geq 0}$, we compute \[f_m^{(\ell)}(g_n(1))=(-1)^\ell p^{-n(m+\ell)}\frac{(m-1+\ell)!}{(m-1)!} \qquad \text{and} \qquad g_n^{(i)}(1)=i!\begin{pmatrix}p^n \\ i+1\end{pmatrix},\] where we adopt the usual convention that $\left(\begin{smallmatrix}p^n\\ i+1\end{smallmatrix}\right)=0$ whenever $i\geq p^n$. Therefore the partitions $\mu\in \Pi(k)\backslash \Pi_n(k)$ with greatest part $\mu_1\geq p^n$ do not contribute to the sum. \end{proof}

We isolate the following special case for ease of reference (cf.~\cite[Cor.~2.18]{arreche-zhang:2022}), since it arises often.

\begin{cor}\label{cor:mahler-coefficients} Let $\alpha\in\mathbb{K}^\times$, $m\in\mathbb{N}$, and $n\in\mathbb{Z}_{\geq 0}$. Then $V^m_{m,n}(\alpha)=p^{-nm}\alpha^{m-p^nm}$.\end{cor}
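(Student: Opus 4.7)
The plan is to compute $\mathbb{V}^m_{m,n}$ directly and then invoke Lemma~\ref{lem:mahler-coefficients}, which gives $V^m_{m,n}(\alpha)=\mathbb{V}^m_{m,n}\cdot \alpha^{m-p^nm}$. So the only real content is showing $\mathbb{V}^m_{m,n}=p^{-nm}$.

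The quickest route is to bypass Proposition~\ref{prop:mahler-coefficients} and return to the defining Taylor expansion \eqref{eq:taylor-coefficients}. By the indexing in \eqref{eq:taylor-coefficients}, the coefficient $\mathbb{V}^m_{m,n}$ is precisely the constant term (\ie, the coefficient of $(x-1)^{m-m}=(x-1)^0$) in the Taylor expansion of $(x^{p^n-1}+\dots+x+1)^{-m}$ about $x=1$. Evaluating this function at $x=1$ yields $p^{-nm}$, since $x^{p^n-1}+\dots+x+1$ takes the value $p^n$ there. Thus $\mathbb{V}^m_{m,n}=p^{-nm}$, and the claim follows.

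Alternatively, and essentially as a sanity check, one may derive the same conclusion from Proposition~\ref{prop:mahler-coefficients}: setting $k=m$ the sum ranges over $\Pi_n(0)=\{\emptyset\}$ (per Definition~\ref{defn:partitions}), and the unique contribution from $\mu=\emptyset$ has $\ell(\emptyset)=0$ and $\ell_i(\emptyset)=0$ for every $i$, so that the multinomial coefficient equals $1$, the product over $i$ is the empty product $1$, and the factor $(-p^n)^{-\ell(\mu)}=1$. Hence the sum collapses to $1$, leaving the prefactor $p^{-nm}$.

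No real obstacle is expected, since the claim is just the $k=m$ boundary case where all the partition-sum machinery of Proposition~\ref{prop:mahler-coefficients} trivializes. The only thing to be careful about is the indexing convention in \eqref{eq:taylor-coefficients}, where $\mathbb{V}^m_{k,n}$ multiplies $(x-1)^{m-k}$ rather than $(x-1)^k$; with that convention, the identification of $\mathbb{V}^m_{m,n}$ with the value of $(x^{p^n-1}+\dots+1)^{-m}$ at $x=1$ is immediate.
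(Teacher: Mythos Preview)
Your proposal is correct. Your ``alternative'' argument via Proposition~\ref{prop:mahler-coefficients} is exactly the paper's own proof: specialize to $k=m$, observe that the sum over $\Pi_n(0)=\{\emptyset\}$ collapses to $1$, and conclude $\mathbb{V}^m_{m,n}=p^{-nm}$, then apply Lemma~\ref{lem:mahler-coefficients}. Your primary route---reading $\mathbb{V}^m_{m,n}$ directly off \eqref{eq:taylor-coefficients} as the constant Taylor coefficient, hence the value of $(x^{p^n-1}+\dots+1)^{-m}$ at $x=1$---is a perfectly valid and slightly more elementary shortcut that bypasses the partition-sum formula entirely; it buys you independence from Proposition~\ref{prop:mahler-coefficients} at no cost.
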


\begin{proof} In the special case where $k=m$ in Proposition~\ref{prop:mahler-coefficients}, the sum is over $\mu\in\Pi(0)=\{\emptyset\}$, and $\ell(\emptyset)=0=\ell_i(\emptyset)$ for every $i\in\mathbb{N}$, whence $V^m_{m,n}(\alpha)=p^{-nm}\alpha^{m-p^nm}$ by Lemma~\ref{lem:mahler-coefficients}.
\end{proof}

The Mahler coefficients $V^m_{k,n}(\alpha)$ defined above are the main ingredients in our definition of twisted Mahler discrete residues. Our proofs that these residues comprise a complete obstruction to $\lambda$-Mahler summability will rely on the following elementary computations, which we record here once and for all for future reference.

\begin{lem}\label{lem:mahler-coefficients-computations} Let $n\in\mathbb{Z}_{\geq 0}$, $\alpha\in\mathbb{K}^\times$, and $d_1,\dots,d_m\in\mathbb{K}$ for some $m\in \mathbb{N}$. Then \[\sigma^n\left(\sum_{k=1}^m\frac{d_k}{(x-\alpha^{p^n})^k}\right)=\sum_{k=1}^m\sum_{i=0}^{p^n-1}\frac{\sum_{s=k}^mV^s_{k,n}(\zeta_{p^n}^i\alpha)d_s}{(x-\zeta_{p^n}^i\alpha)^k}.\] For $\lambda\in\mathbb{Z}$ and $g\in\mathbb{K}(x)$, the element $\Delta_\lambda^{(n)}(g):=p^{\lambda n}\sigma^n(g)-g$ is $\lambda$-Mahler summable.
\end{lem}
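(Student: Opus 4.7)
The plan is to prove the two assertions of the lemma separately, and both should follow almost directly from the definitions and a simple telescoping identity.

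For the first identity, my strategy is to apply $\sigma^n$ term-by-term. For each $s$ with $1\le s\le m$, the defining relation \eqref{eq:mahler-coefficients} for Mahler coefficients says
\[
\sigma^n\!\left(\frac{d_s}{(x-\alpha^{p^n})^s}\right) = d_s\sum_{k=1}^{s}\sum_{i=0}^{p^n-1}\frac{V^s_{k,n}(\zeta_{p^n}^i\alpha)}{(x-\zeta_{p^n}^i\alpha)^k}.
\]
Summing over $s$ from $1$ to $m$ and swapping the order of summation of $s$ and $k$ (so that $s$ ranges from $k$ to $m$) yields the stated formula after collecting the coefficient of $(x-\zeta_{p^n}^i\alpha)^{-k}$. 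This is purely a bookkeeping step; no estimation or case analysis is required.

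For the second claim, my plan is to exhibit an explicit certificate via a classical telescoping trick. Since $\sigma$ is $\mathbb{K}$-linear, so is $\Delta_\lambda = p^\lambda\sigma - \mathrm{id}$. I will set
\[
h := \sum_{j=0}^{n-1} p^{\lambda j}\,\sigma^j(g) \in \mathbb{K}(x),
\]
and verify directly that
\[
\Delta_\lambda(h) = \sum_{j=0}^{n-1} p^{\lambda(j+1)}\sigma^{j+1}(g) \;-\; \sum_{j=0}^{n-1} p^{\lambda j}\sigma^j(g) = p^{\lambda n}\sigma^n(g) - g = \Delta_\lambda^{(n)}(g),
\]
after a change of index $k=j+1$ in the first sum causes all intermediate terms to cancel. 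This exhibits $\Delta_\lambda^{(n)}(g)$ as $\Delta_\lambda(h)$, proving $\lambda$-Mahler summability.

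Neither step presents real difficulty; the main bookkeeping concern is keeping straight the order-swap in the double sum for the first part, while the second part is a one-line telescoping computation once the correct candidate $h$ is guessed.
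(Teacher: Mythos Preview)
Your proposal is correct and follows essentially the same approach as the paper's own proof: apply the defining relation \eqref{eq:mahler-coefficients} term-by-term and swap the order of summation for the first identity, and exhibit the same explicit certificate $h=\sum_{j=0}^{n-1}p^{\lambda j}\sigma^j(g)$ for the telescoping argument in the second. The only cosmetic difference is that the paper singles out the trivial case $n=0$ explicitly before proceeding, whereas your argument handles it uniformly.
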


\begin{proof} The claims are trivial if $n=0$: $\zeta_{1}=1$, $V^s_{k,0}(\alpha)=\delta_{s,k}$ (Kronecker's $\delta$) for $k\leq s \leq m$, and $\Delta^{(0)}_\lambda(g)=0$ is $\lambda$-Mahler summable. Suppose that $n\geq 1$. For $1\leq s\leq m$ we have \[\sigma^n\left(\frac{d_s}{(x-\alpha^{p^n})^s}\right)=\sum_{k=1}^s\sum_{i=0}^{p^n-1}\frac{V^s_{k,n}(\zeta_{p^n}^i\alpha)d_s}{(x-\zeta_{p^n}^i\alpha)^k}\] by definition (cf.~\eqref{eq:mahler-coefficients}), and it follows that\[\sigma^n\left(\sum_{s=1}^m\frac{d_s}{(x-\alpha^{p^n})^s}\right)=\sum_{s=1}^m\sum_{k=1}^s\sum_{i=0}^{p^n-1}\frac{V^s_{k,n}(\zeta_{p^n}^i\alpha)d_s}{(x-\zeta_{p^n}^i\alpha)^k}=\sum_{k=1}^m\sum_{i=0}^{p^n-1}\frac{\sum_{s=k}^mV^s_{k,n}(\zeta_{p^n}^i\alpha)d_s}{(x-\zeta_{p^n}^i\alpha)^k}.\] Finally, since \[\Delta^{(n)}_\lambda(g)=p^{\lambda n}\sigma^n(g)-g=p^\lambda\sigma\left(\sum_{j=0}^{n-1}p^{\lambda j}\sigma^j(g)\right)-\left(\sum_{j=0}^{n-1}p^{\lambda j}\sigma^j(g)\right)=\Delta_\lambda\left(\sum_{j=0}^{n-1}p^{\lambda j}\sigma^j(g)\right),\]  $\Delta_\lambda^{(n)}(g)$ is $\lambda$-Mahler summable. 
\end{proof}


\section{Cycle maps and their $\omega$-sections}\label{sec:cycle-maps} The goal of this section is to define and study the properties of two auxiliary maps $\mathcal{D}_{\lambda,\tau}$ and $\mathcal{I}^{(\omega)}_{\lambda,\tau}$ that will help us retain some control over the perverse periodic behavior of the roots of unity $\gamma\in\mathcal{C}(\tau)$ under the $p$-power map $\gamma\mapsto \gamma^p$. The following definitions and results are relevant only for torsion Mahler trees $\tau\in\mathcal{T}_+$.

\begin{defn}\label{defn:cyclic-component} With notation as in Definition~\ref{defn:cycles}, let $\tau\in\mathcal{T}_+$ be a torsion Mahler tree, let $g\in\mathbb{K}(x)$, and let us write the $\tau$-component $g_\tau$ of $g$ from Definition~\ref{defn:tree-projection} as \[g_\tau=\sum_{k\in\mathbb{N}}\sum_{\alpha\in\tau}\frac{d_k(\alpha)}{(x-\alpha)^k}.\] We define the \emph{cyclic component} of $g_\tau$ by \[\mathcal{C}(g_\tau):=\sum_{k\in\mathbb{N}}\sum_{\gamma\in\mathcal{C}(\tau)}\frac{d_k(\gamma)}{(x-\gamma)^k}.\]
\end{defn}

\begin{defn}\label{defn:d-map}Let $\mathcal{S}:=\bigoplus_{k\in\mathbb{N}}\mathbb{K}$ denote the $\mathbb{K}$-vector space of finitely supported sequences in $\mathbb{K}$. For $\tau\in\mathcal{T}_+$, we let $\mathcal{S}^{\mathcal{C}(\tau)}:=\bigoplus_{\gamma\in\mathcal{C}(\tau)}\mathcal{S}$. For $\lambda\in\mathbb{Z}$, we define \emph{cycle map} $\mathcal{D}_{\lambda,\tau}$ to be the $\mathbb{K}$-linear endomorphism \begin{equation}\label{eq:d-map-true}
\mathcal{D}_{\lambda,\tau}:  \mathcal{S}^{\mathcal{C}(\tau)} \rightarrow\mathcal{S}^{\mathcal{C}(\tau)}: 
\left(d_{k}(\gamma)\right)_{\substack{k\in\mathbb{N}\\ \gamma\in\mathcal{C}(\tau)} }\mapsto \left(-d_{k}(\gamma)+p^\lambda\sum_{s\geq k}V^s_{k,1}(\gamma)\cdot d_{s}(\gamma^p)\right)_{\substack{k\in\mathbb{N}\\ \gamma\in\mathcal{C}(\tau)} },\end{equation} where the Mahler coefficients $V^s_{k,1}(\gamma)$ are defined as in \eqref{eq:mahler-coefficients}.\end{defn}

We treat the $\mathbb{K}$-vector space $\mathcal{S}^{\mathcal{C}(\tau)}$ introduced in the preceding Definition~\ref{defn:d-map} as an abstract receptacle for the coefficients occurring in the partial fraction decomposition of $\mathcal{C}(g_\tau)$ for $\tau\in\mathcal{T}_+$ and arbitrary elements $g\in \mathbb{K}(x)$. Note that the infinite summation in \eqref{eq:d-map-true} is harmless, since $d_s(\gamma^p)=0$ for every $\gamma\in\mathcal{C}(\gamma)$ for large enough $s\in\mathbb{N}$. The cycle map $\mathcal{D}_{\lambda,\tau}$ for $\lambda=0$ is the negative of the (truncated) linear map introduced in~\cite[Lemma~4.14]{arreche-zhang:2022}. 
 The relevance of $\mathcal{D}_{\lambda,\tau}$ to our study of $\lambda$-Mahler summability is captured by the following immediate computational result.

\begin{lem}\label{lem:cyclic-component-short} Let $\lambda\in\mathbb{Z}$, $g\in\mathbb{K}(x)$, and $\tau\in\mathcal{T}_+$. Let us write the cyclic components \[\mathcal{C}(g_\tau)=\sum_{k\in\mathbb{N}}\sum_{\gamma\in\mathcal{C}(\tau)}\frac{d_k(\gamma)}{(x-\gamma)^k}\qquad\text{and}\qquad \mathcal{C}\left(\Delta_\lambda(g_\tau)\right)=\sum_{k\in\mathbb{N}}\sum_{\gamma\in\mathcal{C}(\tau)}\frac{c_k(\gamma)}{(x-\gamma)^k}\] as in Definition~\ref{defn:cyclic-component}. Writing $\mathbf{d}:=(d_k(\gamma))_{k,\gamma}$ and $\mathbf{c}:=(c_k(\gamma))_{k,\gamma}$ as vectors in $\mathcal{S}^{\mathcal{C}(\tau)}$ as in Definition~\ref{defn:d-map}, we have $\mathbf{c}=\mathcal{D}_{\lambda,\tau}(\mathbf{d})$.
\end{lem}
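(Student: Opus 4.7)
The plan is to compute each coefficient $c_k(\gamma)$ directly in terms of the $d_s(\alpha)$ by applying $\Delta_\lambda=p^\lambda\sigma-\mathrm{id}$ to the partial fraction decomposition of $g_\tau$ and isolating the coefficient of $(x-\gamma)^{-k}$ for each $\gamma\in\mathcal{C}(\tau)$ and $k\in\mathbb{N}$. The contribution of $-\mathrm{id}$ to the coefficient of $(x-\gamma)^{-k}$ in $-g_\tau$ is manifestly $-d_k(\gamma)$, so the real work lies in identifying the contribution of $p^\lambda\sigma(g_\tau)$ to the same coefficient.

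First I would write $g_\tau=\sum_{\alpha\in\tau}\sum_s d_s(\alpha)(x-\alpha)^{-s}$ (a finite sum, since $g_\tau\in\mathbb{K}(x)$) and apply $\sigma$ block-by-block in $\alpha$. Since $\sigma\bigl((x-\alpha)^{-s}\bigr)=(x^p-\alpha)^{-s}$ has poles precisely at the $p$ distinct $p$-th roots of $\alpha$, the coefficient of $(x-\gamma)^{-k}$ in $\sigma(g_\tau)$ receives a non-zero contribution from the $\alpha$-block if and only if $\gamma^p=\alpha$. Thus for each $\gamma$ a unique $\alpha\in\tau$ contributes, namely $\alpha=\gamma^p$. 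Crucially, since $\gamma\in\mathcal{C}(\tau)$ is a root of unity of order $r$ coprime to $p$, its $p$-th power $\gamma^p$ has the same order $r$ and hence also lies in $\mathcal{C}(\tau)$. So the relevant $d_s(\gamma^p)$ genuinely belong to the cyclic component of $g_\tau$, and no non-cyclic pole of $g_\tau$ can contribute to the coefficient at $\gamma$.

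Next I would apply Lemma~\ref{lem:mahler-coefficients-computations} with $n=1$ and with the $\alpha$ of that statement replaced by $\gamma$ (so that $\alpha^p=\gamma^p$ and the $i=0$ index singles out the pole at $\gamma$): this yields that the coefficient of $(x-\gamma)^{-k}$ in $\sigma\bigl(\sum_{s} d_s(\gamma^p)(x-\gamma^p)^{-s}\bigr)$ is exactly $\sum_{s\geq k}V^s_{k,1}(\gamma)\,d_s(\gamma^p)$. Multiplying by $p^\lambda$ and combining with the $-d_k(\gamma)$ from $-g_\tau$ produces
\[
c_k(\gamma)=-d_k(\gamma)+p^\lambda\sum_{s\geq k}V^s_{k,1}(\gamma)\,d_s(\gamma^p),
\]
which is exactly the $(k,\gamma)$-entry of $\mathcal{D}_{\lambda,\tau}(\mathbf{d})$ per Definition~\ref{defn:d-map}.

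The main obstacle is not computational but conceptual bookkeeping: one has to verify that no pole of $g_\tau$ lying outside $\mathcal{C}(\tau)$ contributes after the action of $\sigma$ to a pole in $\mathcal{C}(\tau)$. This is precisely what the closure property $\gamma\in\mathcal{C}(\tau)\Rightarrow\gamma^p\in\mathcal{C}(\tau)$ ensures, and it is what makes the cyclic components of $g_\tau$ and $\Delta_\lambda(g_\tau)$ related by the single matrix $\mathcal{D}_{\lambda,\tau}$ rather than by a larger operator also mixing in non-cyclic data.
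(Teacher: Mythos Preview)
Your argument is correct and follows essentially the same route as the paper's proof: both apply Lemma~\ref{lem:mahler-coefficients-computations} with $n=1$ to compute the coefficient of $(x-\gamma)^{-k}$ in $\sigma(g_\tau)$ as $\sum_{s\geq k}V^s_{k,1}(\gamma)d_s(\gamma^p)$, then combine with the $-d_k(\gamma)$ from $-g_\tau$. Your version is more explicit about the bookkeeping point that $\gamma\in\mathcal{C}(\tau)\Rightarrow\gamma^p\in\mathcal{C}(\tau)$, which the paper leaves implicit.
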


\begin{proof}It follows from Lemma~\ref{lem:mahler-coefficients-computations} that \[\mathcal{C}(\sigma(g_\tau))=\sum_{k\in\mathbb{N}}\sum_{\gamma\in\mathcal{C}(\tau)}\frac{\sum_{s\geq k} V^s_{k,1}(\gamma)d_s(\gamma^p)}{(x-\gamma)^k},\] and therefore, for every $k\in\mathbb{N}$ and $\gamma\in\mathcal{C}(\tau)$, \[c_k(\gamma)=-d_k(\gamma)+p^\lambda\sum_{s\geq k} V^s_{k,1}(\gamma)d_s(\gamma^p).\qedhere\] 
\end{proof}

The following fundamental Lemma is essential to our study of $\lambda$-Mahler summability at torsion Mahler trees $\tau\in\mathcal{T}_+$.

\begin{lem}\label{lem:d-map-kernel} Let $\lambda\in\mathbb{Z}$, $\tau\in\mathcal{T}_+$, and set  $e:=|\mathcal{C}(\tau)|$ as in Definition~\ref{defn:cycles}. Let $\mathcal{D}_{\lambda,\tau}$ be as in Definition~\ref{defn:d-map}. 
\begin{enumerate}
\item If $\lambda\leq 0$ then $\mathcal{D}_{\lambda,\tau}$ is an isomorphism.
\item If $\lambda\geq 1$ then $\mathrm{im}(\mathcal{D}_{\lambda,\tau})$ has codimension $1$ in $\mathcal{S}^{\mathcal{C}(\tau)}$ and $\mathrm{ker}(\mathcal{D}_{\lambda,\tau})=\mathbb{K}\cdot \mathbf{w}^{(\lambda)}$, where the vector $(w^{(\lambda)}_k(\gamma))=\mathbf{w}^{(\lambda)}\in\mathcal{S}^{\mathcal{C}(\tau)}$ is recursively determined by the conditions 

\begin{equation}\label{eq:kernel-recursion}
w^{(\lambda)}_k(\gamma):=
\begin{cases}0 & \text{for}\ k>\lambda;\\
\gamma^\lambda & \text{for}\ k=\lambda; \\
\displaystyle\frac{p^\lambda\gamma^k}{1-p^{(\lambda-k)e}}\sum_{j=0}^{e-1}\sum_{s=k+1}^\lambda p^{(\lambda-k)j}\mathbb{V}^s_{k,1}\gamma^{-sp^{j+1}} w^{(\lambda)}_s\bigl(\gamma^{p^{j+1}}\bigr) & \text{for any remaining} \ k<\lambda;
\end{cases}
    \end{equation}
    for each $\gamma\in\mathcal{C}(\tau)$, where the universal Mahler coefficients $\mathbb{V}^s_{k,1}\in\mathbb{Q}$ are as in Proposition~\ref{prop:mahler-coefficients}.
\end{enumerate}
\end{lem}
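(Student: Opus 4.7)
The plan is to analyze $\mathcal{D}_{\lambda,\tau}$ via the increasing filtration of $\mathcal{S}^{\mathcal{C}(\tau)}$ by the subspaces $\mathcal{F}_m := \{\mathbf{d}\in\mathcal{S}^{\mathcal{C}(\tau)} : d_k(\gamma) = 0 \text{ for all } k > m\}$. Since the $k$-th component of $\mathcal{D}_{\lambda,\tau}(\mathbf{d})$ only involves $d_k(\gamma)$ and the entries $d_s(\gamma^p)$ with $s \geq k$, this filtration is $\mathcal{D}_{\lambda,\tau}$-stable. The induced endomorphism on each graded piece $\mathcal{F}_k/\mathcal{F}_{k-1} \cong \mathbb{K}^{\mathcal{C}(\tau)}$ is $T_k = S_k - \mathrm{id}$, where
\[
S_k(d)(\gamma) \;:=\; p^{\lambda-k}\,\gamma^{k(1-p)}\,d(\gamma^p),
\]
using Corollary~\ref{cor:mahler-coefficients} to identify the diagonal coefficient $p^\lambda V^k_{k,1}(\gamma) = p^{\lambda-k}\gamma^{k(1-p)}$.

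The decisive structural input is from Remark~\ref{rem:cycle-facts}: the $p$-power map permutes $\mathcal{C}(\tau)$ as a single $e$-cycle, so $\gamma^{p^e}=\gamma$ for every $\gamma\in\mathcal{C}(\tau)$. A brief telescoping computation shows that $S_k^e$ acts on $\mathbb{K}^{\mathcal{C}(\tau)}$ as multiplication by the scalar $p^{(\lambda-k)e}$. Since $p\geq 2$ and $\mathrm{char}(\mathbb{K})=0$, the integer $p^{(\lambda-k)e}$ equals $1$ if and only if $k=\lambda$, so the eigenvalues $p^{\lambda-k}\omega-1$ of $T_k$ (with $\omega^e=1$) are all nonzero unless $k=\lambda$, in which exceptional case $T_\lambda$ has a $1$-dimensional kernel and $1$-dimensional cokernel. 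Part~(1) then follows immediately: if $\lambda\leq 0$ then no $k\in\mathbb{N}$ equals $\lambda$, so every $T_k$ is an isomorphism; a routine induction on $m$ shows that $\mathcal{D}_{\lambda,\tau}$ restricts to an isomorphism of each $\mathcal{F}_m$, and since each $\mathbf{d}\in\mathcal{S}^{\mathcal{C}(\tau)}$ lies in some $\mathcal{F}_m$ we conclude that $\mathcal{D}_{\lambda,\tau}$ is itself an isomorphism.

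For part~(2) the same graded analysis bounds both $\mathrm{ker}\,\mathcal{D}_{\lambda,\tau}$ and $\mathrm{coker}\,\mathcal{D}_{\lambda,\tau}$ by $1$-dimensional subspaces concentrated at level $\lambda$. To exhibit the explicit kernel generator, I would use downward induction on $k$: set $w^{(\lambda)}_k:=0$ for $k>\lambda$, let $w^{(\lambda)}_\lambda(\gamma):=\gamma^\lambda$ (which solves the level-$\lambda$ homogeneous equation $d=S_\lambda d$ because $\gamma^{\lambda(1-p)}(\gamma^p)^\lambda=\gamma^\lambda$), and for $k<\lambda$ solve the inhomogeneous equation
\[
d_k(\gamma) - p^{\lambda-k}\gamma^{k(1-p)}d_k(\gamma^p) \;=\; p^\lambda\sum_{s=k+1}^\lambda V^s_{k,1}(\gamma)\,w^{(\lambda)}_s(\gamma^p)
\]
by iterating $e$ times along the cycle: the left-hand operator then becomes multiplication by $1-p^{(\lambda-k)e}$, which is nonzero since $k<\lambda$. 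I expect the main technical obstacle to be the bookkeeping of the $\gamma$-exponents $k-sp^{j+1}$ accumulated across the $e$ iterations, which must be checked to collapse (using $\gamma^{p^e}=\gamma$) into the form recorded in~\eqref{eq:kernel-recursion}. Modulo this calculation, uniqueness at each level forces the stated recursion, and the dimension bounds yield $\mathrm{ker}(\mathcal{D}_{\lambda,\tau})=\mathbb{K}\cdot\mathbf{w}^{(\lambda)}$ with $\mathrm{im}(\mathcal{D}_{\lambda,\tau})$ of codimension~$1$.
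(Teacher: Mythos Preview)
Your proposal is correct and takes essentially the same approach as the paper's proof: both arguments use the filtration of $\mathcal{S}^{\mathcal{C}(\tau)}$ by the finite-dimensional subspaces $\mathcal{F}_m$, exploit the fact that iterating the top-graded operator $e$ times around the cycle yields the scalar $p^{(\lambda-k)e}$ (which equals $1$ iff $k=\lambda$), and derive the recursion~\eqref{eq:kernel-recursion} by the same telescoping sum over $j=0,\dots,e-1$. Your framing via the graded operators $S_k$ and $T_k=S_k-\mathrm{id}$ is slightly more systematic than the paper's presentation, but the mathematical content---including the telescoping bookkeeping you flag as the main technical point---is identical to what the paper carries out explicitly.
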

\begin{proof} Let $(d_k(\gamma))=\mathbf{d}\in\mathcal{S}^{\mathcal{C}(\tau)}-\{\mathbf{0}\}$, let $m\in\mathbb{N}$ be as large as possible such that $d_m(\gamma)\neq 0$ for some $\gamma\in\mathcal{C}(\tau)$, and let us write $(c_k(\gamma))=\mathbf{c}:=\mathcal{D}_{\lambda,\tau}(\mathbf{d})$.

Let us first assume that $\mathbf{d}\in\mathrm{ker}(\mathcal{D}_{\lambda,\tau})\Leftrightarrow\mathbf{c}=\mathbf{0}$. Then by the Definition~\ref{defn:d-map} and our choice of $m$, for each $\gamma\in\mathcal{C}(\tau)$, \begin{equation}\label{eq:d-kernel} 0=c_m(\gamma)
=p^\lambda V^m_{m,1}(\gamma)d_m(\gamma^p)-d_m(\gamma)=p^{\lambda-m}\gamma^{m-pm}d_m(\gamma^p)-d_m(\gamma),\end{equation} where the second equality results from Corollary~\ref{cor:mahler-coefficients}. Since \eqref{eq:d-kernel} holds for every $\gamma\in\mathcal{C}(\tau)$ simultaneously, it follows that $d_m(\gamma^{p^{j+1}})=p^{m-\lambda}\gamma^{(p^{j+1}-p^j)m}d_m(\gamma^{p^j})$ for every $j\geq 0$ and for each $\gamma\in\mathcal{C}(\gamma)$, whence none of the $d_m(\gamma^{p^j})$ can be zero. Since $\gamma^{p^e}=\gamma$, we find that \[1=\frac{d_m(\gamma^{p^e})}{d_m(\gamma)}=\prod_{j=0}^{e-1}\frac{d_m(\gamma^{p^{j+1}})}{d_m(\gamma^{p^j})}=\prod_{j=0}^{e-1}p^{m-\lambda}\gamma^{(p^{j+1}-p^j)m}=p^{(m-\lambda)e}\gamma^{(p^e-1)m}=p^{(m-\lambda)e},\] which is only possible if $m=\lambda$. Therefore $d_k(\gamma)=0$ for every $k>\lambda$, whence $\mathcal{D}_{\lambda,\tau}$ is injective in case $\lambda\leq 0$. In case $\lambda\geq 1$, it also follows from \eqref{eq:d-kernel} with $m=\lambda$ that $\gamma^{-p\lambda}d_\lambda(\gamma^p)=\gamma^{-\lambda} d_\lambda(\gamma)=\omega$ must be a constant that does not depend on $\gamma\in\mathcal{C}(
\gamma)$. We claim that if we further impose that this $\omega=1,$ then the remaining componenets of our vector $\mathbf{d}$ are uniquely determined by the recursion \eqref{eq:kernel-recursion}. Indeed, if $\lambda=1$ then there are no more components to determine, whereas if $\lambda\geq 2$ then we must have, for $1\leq k \leq \lambda-1$, \begin{gather*}0=-d_k(\gamma)+p^\lambda\sum_{s=k}^\lambda V^s_{k,1}(\gamma)d_s(\gamma^p)\qquad\Longleftrightarrow \\ d_k(\gamma)-p^{\lambda-k}\gamma^{k-pk}d_k(\gamma^p)=d_k(\gamma)-p^\lambda V^k_{k,1}(\gamma)d_k(\gamma^p)=p^\lambda\sum_{s=k+1}^\lambda V^s_{k,1}(\gamma)d_s(\gamma^p),
\end{gather*} where the first equality is obtained from Corollary~\ref{cor:mahler-coefficients} and the second is just a rearrangement. Replacing the arbitrary $\gamma$ in the above equation with $\gamma^{p^j}$ for $j=0,\dots,e-1$, we find that the telescoping sum \begin{multline*}\gamma^{-k}\bigl(1-p^{(\lambda-k)e}\bigr)d_k(\gamma)=
\sum_{j=0}^{e-1}p^{(\lambda-k)j}\gamma^{-kp^j}\cdot\Bigl(d_k\bigl(\gamma^{p^j}\bigr)-p^{\lambda-k}\gamma^{kp^j-kp^{j+1}} d_k\bigl(\gamma^{p^{j+1}}\bigr)\Bigr)\\
=\sum_{j=0}^{e-1}p^{(\lambda-k)j}\gamma^{-kp^j}\cdot p^\lambda\sum_{s=k+1}^\lambda V^s_{k,1}\bigl(\gamma^{p^j}\bigr) d_s\bigl(\gamma^{p^{j+1}}\bigr)=p^\lambda\sum_{j=0}^{e-1}\sum_{s=k+1}^\lambda p^{(\lambda-k)j}\mathbb{V}^s_{k,1}\gamma^{-sp^{j+1}} d_s\bigl(\gamma^{p^{j+1}}\bigr),\end{multline*} which is clearly equivalent to the expression defining the components $w^{(\lambda)}_k(\gamma)$ for $k<\lambda$ in \eqref{eq:kernel-recursion}, and where we have once again used Lemma~\ref{lem:mahler-coefficients} to obtain the last equality, since $V^s_{k,1}(\gamma^{p^j})=\mathbb{V}^s_{k,1}\gamma^{kp^j-sp^{j+1}}$. This concludes the proof of the statements concerning $\mathrm{ker}(\mathcal{D}_{\lambda,\tau})$.

Let us now prove the statements concerning $\mathrm{im}(\mathcal{D}_{\lambda,\tau})$. We see from Definition~\ref{defn:d-map} that $\mathcal{D}_{\lambda,\tau}$ preserves the increasing filtration of $\mathcal{S}^{\mathcal{C}(\tau)}$ by the finite-dimensional subspaces \begin{equation}\label{eq:filtration}
\mathcal{S}^{\mathcal{C}(\tau)}_{<m}:=\left\{(d_k(\gamma))\in\mathcal{S}^{\mathcal{C}(\tau)} \ \middle| \ d_k(\gamma)=0 \ \text{for} \ k\geq m \ \text{and every} \ \gamma\in\mathcal{C}(\tau)\right\}.\end{equation} In case $\lambda\leq 0$, since $\mathcal{D}_{\lambda,\tau}$ is injective, it must restrict to an automorphism of $\mathcal{S}^{\mathcal{C}(\tau)}_{<m}$ for each $m\in\mathbb{N}$, concluding the proof of (1). In case $\lambda\geq 1$, since $\mathrm{ker}(\mathcal{D}_{\lambda,\tau})$ is one dimensional, it follows that $\mathrm{im}(\mathcal{D}_{\lambda,\tau})\cap \mathcal{S}^{\mathcal{C}(\tau)}_{<m}$ has codimension $1$ in $\mathcal{S}^{\mathcal{C}(\tau)}_{<m}$ for every $m\geq \lambda+1$, and therefore $\mathrm{im}(\mathcal{D}_{\lambda,\tau})$ has codimension $1$ in all of $\mathcal{S}^{\mathcal{C}(\tau)}$. This concludes the proof.\end{proof}

\begin{defn}\label{defn:omega-section} Let $\lambda\in\mathbb{Z}$, $\tau\in\mathcal{T}_+$, and set $e:=|\mathcal{C}(\tau)|$ as in Definition~\ref{defn:cycles}. We define the \mbox{$0$\emph{-section}} $\mathcal{I}^{(0)}_{\lambda,\tau}$ (of the map $\mathcal{D}_{\lambda,\tau}$ of Definition~\ref{defn:d-map}) as follows. For $(c_k(\gamma))=\mathbf{c}\in\mathcal{S}^{\mathcal{C}(\tau)}$, let us write  $(d_k(\gamma))=\mathbf{d}=\mathcal{I}^{(0)}_{\lambda,\tau}(\mathbf{c})\in\mathcal{S}^{\mathcal{C}(\tau)}$. 
 We set each $d_k(\gamma)=0$ whenever $k\in\mathbb{N}$ is such that $c_k(\gamma)=0$ for every $\gamma\in\mathcal{C}(\tau)$. 
For any remaining $k\in\mathbb{N}$, we define recursively \begin{gather}\label{eq:omega-section-k}d_k(\gamma):=
   \frac{\gamma^k}{p^{(\lambda-k)e}-1}\sum_{j=0}^{e-1}p^{(\lambda-k)j}\gamma^{-kp^j}\left[ c_k\bigl(\gamma^{p^j}\bigr)-p^\lambda\sum_{s\geq k+1}V^s_{k,1} \bigl(\gamma^{p^j}\bigr) d_s \bigl(\gamma^{p^{j+1}}\bigr)     \right] \qquad \text{for} \ k\neq \lambda;\intertext{and, if $\lambda\geq 1$, we set}
d_\lambda(\gamma):=\frac{\gamma^\lambda}{e}\sum_{j=0}^{e-1}(j+1-e)\gamma^{-\lambda p^{j}}\left[c_\lambda\bigl(\gamma^{p^{j}}\bigr)-p^\lambda\sum_{s\geq\lambda+1} V^s_{\lambda,1}\bigl(\gamma^{p^{j}}\bigr)d_s\bigl(\gamma^{p^{j+1}}\bigr)\right].   \label{eq:omega-section-lambda}
\end{gather}
    More generally, for any $\omega\in\mathbb{K}$ the $\omega$\emph{-section} $\mathcal{I}^{(\omega)}_{\lambda,\tau}$ (of $\mathcal{D}_{\lambda,\tau}$) is defined by setting  \begin{equation}\label{eq:omega-section}
        \mathcal{I}^{(\omega)}_{\lambda,\tau}(\mathbf{c}):=\begin{cases}\mathcal{I}^{(0)}_{\lambda,\tau}(\mathbf{c}) & \text{if} \ \lambda\leq 0; \\ \vphantom{\displaystyle \sum^\lambda}\mathcal{I}^{(0)}_{\lambda,\tau}(\mathbf{c})+\omega\mathbf{w}^{(\lambda)} & \text{if} \ \lambda\geq 1;
        \end{cases}
    \end{equation} for every $\mathbf{c}\in\mathcal{S}^{\mathcal{C}(\tau)},$ where $\mathbf{w}^{(\lambda)}$ is the vector defined in \eqref{eq:kernel-recursion} for $\lambda\geq 1$.
\end{defn}

\begin{prop}\label{prop:omega-section} Let $\lambda\in\mathbb{Z}$, $\tau\in\mathcal{T}_+$, and set $e:=|\mathcal{C}(\tau)|$ as in Definition~\ref{defn:cycles}. Let $\omega\in\mathbb{K}$ and consider the $\omega$-section $\mathcal{I}^{(\omega)}_{\lambda,\tau}$ of Definition~\ref{defn:omega-section}. Let $\mathbf{c}\in\mathcal{S}^{\mathcal{C}(\tau)}$, and let us write $\mathbf{d}:=\mathcal{I}^{(\omega)}_{\lambda,\tau}(\mathbf{c})$ and $\tilde{\mathbf{c}}:=\mathcal{D}_{\lambda,\tau}(\mathbf{d})$ as in Definition~\ref{defn:d-map}. Then 
\begin{equation}\label{eq:omega-section-i}
c_k(\gamma)=\tilde{c}_k(\gamma)  \qquad \text{whenever} \ k\neq \lambda, \ \text{for every} \ \gamma\in\mathcal{C}(\gamma);\end{equation} and, in case $\lambda\geq 1$,  
\begin{equation}
\label{eq:omega-section-c}
    c_\lambda(\gamma)-\tilde{c}_\lambda(\gamma)=\frac{\gamma^\lambda}{e}\sum_{j=1}^e \gamma^{-\lambda p^j}\left(c_\lambda\Bigl(\gamma^{p^j}\Bigr)-p^\lambda\sum_{s\geq\lambda+1}V^s_{\lambda,1}\Bigl(\gamma^{p^j}\Bigr)d_s\Bigl(\gamma^{p^{j+1}}\Bigr)\right).
\end{equation} Moreover, $\mathbf{c}\in\mathrm{im}(\mathcal{D}_{\lambda,\tau})$ if and only if $\mathbf{c}=\tilde{\mathbf{c}}$.  \end{prop}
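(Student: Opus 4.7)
The plan is to establish \eqref{eq:omega-section-i} and \eqref{eq:omega-section-c} by direct substitution into Definition~\ref{defn:d-map}, and then to deduce the ``if and only if'' characterization of $\mathrm{im}(\mathcal{D}_{\lambda,\tau})$ by combining these identities with the kernel analysis already recorded in Lemma~\ref{lem:d-map-kernel}(2). Throughout the verification of \eqref{eq:omega-section-i} and \eqref{eq:omega-section-c} I may harmlessly assume $\omega = 0$, since the correction term $\omega\mathbf{w}^{(\lambda)}$ appearing in \eqref{eq:omega-section} lies in $\ker(\mathcal{D}_{\lambda,\tau})$ by Lemma~\ref{lem:d-map-kernel}(2), so replacing $\mathcal{I}^{(\omega)}_{\lambda,\tau}$ by $\mathcal{I}^{(0)}_{\lambda,\tau}$ does not change $\tilde{\mathbf{c}}$.

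For the two computational claims, I would start from the definition $\tilde{c}_k(\gamma) = -d_k(\gamma) + p^\lambda\sum_{s\geq k}V^s_{k,1}(\gamma)d_s(\gamma^p)$ and isolate the $s = k$ summand as $p^{\lambda-k}\gamma^{k-pk}d_k(\gamma^p)$ using Corollary~\ref{cor:mahler-coefficients}. Substituting the recursive formula \eqref{eq:omega-section-k} for $d_k(\gamma)$ and its $\gamma\mapsto\gamma^p$ shift for $d_k(\gamma^p)$, and invoking Lemma~\ref{lem:mahler-coefficients} to write $V^s_{k,1}(\gamma^{p^j}) = \mathbb{V}^s_{k,1}\gamma^{kp^j-sp^{j+1}}$, I expect everything to telescope around the cycle of length $e$ after using $\gamma^{p^e} = \gamma$. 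For $k\neq\lambda$ the normalizing denominator $p^{(\lambda-k)e}-1$ in \eqref{eq:omega-section-k} has been chosen precisely so that this telescoping collapses to $c_k(\gamma)$, yielding \eqref{eq:omega-section-i}. For $k=\lambda$ that denominator vanishes, which is why \eqref{eq:omega-section-lambda} replaces it with the weights $(j+1-e)/e$; the analogous telescoping with these weights leaves exactly the residual average appearing in \eqref{eq:omega-section-c}.

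For the ``iff'' characterization, the direction $\mathbf{c} = \tilde{\mathbf{c}} \Rightarrow \mathbf{c} \in \mathrm{im}(\mathcal{D}_{\lambda,\tau})$ is immediate. For the converse, the case $\lambda \leq 0$ is trivial: every $k \in \mathbb{N}$ satisfies $k \neq \lambda$, so \eqref{eq:omega-section-i} alone forces $\mathbf{c} = \tilde{\mathbf{c}}$ for every $\mathbf{c}$, matching the surjectivity asserted in Lemma~\ref{lem:d-map-kernel}(1). Suppose now $\lambda \geq 1$ and $\mathbf{c} = \mathcal{D}_{\lambda,\tau}(\mathbf{d}')$ for some $\mathbf{d}' \in \mathcal{S}^{\mathcal{C}(\tau)}$. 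By \eqref{eq:omega-section-i} the vector $\mathbf{c} - \tilde{\mathbf{c}}$ is supported only at index $k = \lambda$, and a quick inspection of \eqref{eq:omega-section-c} applied with $\gamma$ replaced by $\gamma^p$ shows that $\gamma^{-\lambda}(c_\lambda(\gamma) - \tilde{c}_\lambda(\gamma))$ is invariant under $\gamma \mapsto \gamma^p$, hence constant on the cycle $\mathcal{C}(\tau)$, say equal to $\omega \in \mathbb{K}$. Setting $\mathbf{u} := \mathbf{d}' - \mathbf{d}$, we obtain $\mathcal{D}_{\lambda,\tau}(\mathbf{u}) = \omega \mathbf{v}_\lambda$, where $\mathbf{v}_\lambda \in \mathcal{S}^{\mathcal{C}(\tau)}$ denotes the vector with $\lambda$-th component $\gamma^\lambda$ and every other component zero.

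To force $\omega = 0$, I would re-run the kernel argument from the proof of Lemma~\ref{lem:d-map-kernel}(2), now applied to $\mathbf{u}$ with $\omega\mathbf{v}_\lambda$ (rather than $\mathbf{0}$) on the right-hand side. Letting $m$ be the largest index with $u_m(\gamma)\neq 0$: if $m > \lambda$, the top-degree equation $\mathcal{D}_{\lambda,\tau}(\mathbf{u})_m = 0$ forces $p^{e(\lambda - m)} = 1$, impossible for $p \geq 2$; if $m < \lambda$, then $\mathcal{D}_{\lambda,\tau}(\mathbf{u})_\lambda = 0$ identically (its defining sum ranges only over $s\geq\lambda$, all of which have $u_s = 0$), immediately forcing $\omega = 0$; if $m = \lambda$, setting $v(\gamma) := \gamma^{-\lambda}u_\lambda(\gamma)$ the $k = \lambda$ equation reduces to $v(\gamma^p) - v(\gamma) = \omega$, and telescoping this relation once around the cycle of length $e$ yields $0 = e\omega$, whence $\omega = 0$ because $\mathrm{char}(\mathbb{K}) = 0$. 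In every case $\omega = 0$ and hence $\mathbf{c} = \tilde{\mathbf{c}}$. The main obstacle, I anticipate, will be the careful index bookkeeping in the cyclic telescoping verifications of \eqref{eq:omega-section-i} and \eqref{eq:omega-section-c}; once those are in place, the characterization of $\mathrm{im}(\mathcal{D}_{\lambda,\tau})$ falls out essentially for free from the kernel analysis.
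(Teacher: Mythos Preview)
Your proposal is correct. The telescoping verification you outline for \eqref{eq:omega-section-i} and \eqref{eq:omega-section-c} is exactly the computation the paper carries out (the paper frames it as ``derive \eqref{eq:omega-section-k} as a necessary condition, then invoke injectivity on filtration pieces to conclude sufficiency,'' but the underlying algebra is the same cyclic telescoping you describe). Your reduction to $\omega=0$ via $\mathbf{w}^{(\lambda)}\in\ker(\mathcal{D}_{\lambda,\tau})$ is a clean simplification the paper does not make explicit.

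Where you genuinely diverge is in the ``only if'' direction of the image characterization for $\lambda\geq 1$. The paper dispatches this in one line by a codimension count: the condition $\mathbf{c}=\tilde{\mathbf{c}}$ cuts out a proper subspace (of codimension one, once one observes from \eqref{eq:omega-section-c} that $\mathbf{c}-\tilde{\mathbf{c}}$ always lands in the one-dimensional span of $\mathbf{v}_\lambda$), and since this subspace is contained in $\mathrm{im}(\mathcal{D}_{\lambda,\tau})$, which also has codimension one by Lemma~\ref{lem:d-map-kernel}(2), they must coincide. Your approach instead takes a putative preimage $\mathbf{d}'$, forms $\mathbf{u}=\mathbf{d}'-\mathbf{d}$, and argues directly that $\mathcal{D}_{\lambda,\tau}(\mathbf{u})=\omega\mathbf{v}_\lambda$ forces $\omega=0$ by a three-case degree analysis that recycles the telescoping trick from the proof of Lemma~\ref{lem:d-map-kernel}. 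Both work; the paper's argument is shorter but leans on the codimension statement already proved, while yours is more self-contained and makes the obstruction at $k=\lambda$ concretely visible. (One trivial omission: your case split on $m$ presupposes $\mathbf{u}\neq\mathbf{0}$; when $\mathbf{u}=\mathbf{0}$ the conclusion $\omega=0$ is immediate.)
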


\begin{proof}
    The expression~\eqref{eq:omega-section-k} arises from a similar computation as in the proof of Lemma~\ref{lem:d-map-kernel}. Let $\mathbf{c}\in\mathcal{S}^{\mathcal{C}(\tau)}$ be arbitrary, and let us try (and maybe fail), to construct $\mathbf{d}\in\mathcal{S}^{\mathcal{C}(\tau)}$ such that $\mathcal{D}_{\lambda,\tau}(\mathbf{d})=\mathbf{c}$, that is, with \begin{gather}\label{eq:omega-section-proof} c_k(\gamma)=-d_k(\gamma)+p^\lambda\sum_{s\geq k} V^s_{k,1} (\gamma)d_s(\gamma) \qquad \Longleftrightarrow \\ 
    p^{\lambda-k}\gamma^{k-pk}d_k(\gamma^p)-d_k(\gamma)=c_k(\gamma)-p^\lambda\sum_{s\geq k+1}V^s(\gamma)d_s(\gamma^p).
    \end{gather} Then we again have the telescoping sum \begin{gather*}\bigl(p^{(\lambda-k)e}-1\bigr)\gamma^{-k}d_k(\gamma)=\sum_{j=0}^{e-1}p^{(\lambda-k)j}\gamma^{-kp^j}\cdot\Bigl(p^{\lambda-k}\gamma^{kp^j-kp^{j+1}}d_k\bigl(\gamma^{p^{j+1}}\bigr)-d_k\bigl(\gamma^{p^j}\bigr)\Bigr) \\ =\sum_{j=0}^{e-1}p^{(\lambda-k)j}\gamma^{-kp^j}\cdot \left(c_k\bigl(\gamma^{p^j}\bigr)-p^\lambda\sum_{s\geq k+1}V^s_{k,1}\bigl(\gamma^{p^j}\bigr)d_s\bigl(\gamma^p\bigr)\right),\end{gather*} which is equivalent to \eqref{eq:omega-section-k} provided precisely that $k\neq \lambda$. Thus we see that \eqref{eq:omega-section-k} is a \emph{necessary} condition on the $d_k(\gamma)$ in order to satisfy \eqref{eq:omega-section-i}. In case $\lambda\leq 0$, we know that $\mathcal{D}_{\lambda,\tau}$ is an isomorphism by Lemma~\ref{lem:d-map-kernel}, in which case this condition must also be sufficient and we have nothing more to show. 
    
    Let us assume from now on that $\lambda\geq 1$. Since by Lemma~\ref{lem:d-map-kernel} the restriction of $\mathcal{D}_{\lambda,\tau}$ to \begin{equation*}
       \mathcal{S}^{\mathcal{C}(\tau)}_{>\lambda} :=\left\{\mathbf{d}\in\mathcal{S}^{\mathcal{C}(\tau)} \ \middle| \ d_k(\gamma)=0 \ \text{for every} \ k\leq\lambda \ \text{and} \ \gamma\in\mathcal{C}(\gamma)\right\}\end{equation*} is injective, and since it preserves the induced filtration \eqref{eq:filtration}, it follows that $\mathrm{pr}_\lambda\circ\mathcal{D}_{\lambda,\tau}$ restricts to an automorphism of $\mathcal{S}^{\mathcal{C}(\tau)}_{>\lambda}$, where $\mathrm{pr}_\lambda:\mathcal{S}^{\mathcal{C}(\tau)}\twoheadrightarrow\mathcal{S}^{\mathcal{C}(\tau)}_{\>\lambda}$ denotes the obvious projection map. Therefore the necessary condition \eqref{eq:omega-section-k} must also be sufficient in order to satisfy \eqref{eq:omega-section-i} for $k>\lambda$. Since $\mathcal{D}_{\lambda,\tau}$ also restricts to an automorphism of $\mathcal{S}^{\mathcal{C}(\tau)}_{<\lambda}$ (trivially so in case $\lambda=1$, since $\mathcal{S}^{\mathcal{C}(\tau)}_{<1}=\{\mathbf{0}\}$), it similarly follows that the necessary condition \eqref{eq:omega-section} must also be sufficient in order to satisfy \eqref{eq:omega-section-i} for any $k<\lambda$ also, regardless of how the $d_\lambda(\gamma)$ are chosen.
       
       Now for the prescribed choice of $d_\lambda(\gamma)$ in \eqref{eq:omega-section-lambda}, we compute \begin{equation}\label{eq:omega-section-proof-1}\tilde{c}_\lambda(\gamma)-p^\lambda\sum_{s\geq \lambda+1}V^s_{\lambda,1}(\gamma)d_s(\gamma^p)=p^\lambda V^\lambda_{\lambda,1}(\gamma)d_\lambda(\gamma^p)-d_\lambda(\gamma)=\gamma^{\lambda-p\lambda}d_\lambda(\gamma^p)-d_\lambda(\gamma),\end{equation}

where the first equality follows from the definition of $\tilde{\mathbf{c}}=\mathcal{D}_{\lambda,\tau}(\mathbf{d})$, and the second equality from Corollary~\ref{cor:mahler-coefficients}. On the other hand, after re-indexing the sum in \eqref{eq:omega-section-lambda}, evaluated at $\gamma^p$ instead of $\gamma$, we find that \[\gamma^{\lambda-p\lambda}d_\lambda(\omega^p)=\frac{\gamma^\lambda}{e}\sum_{j=1}^e(j-e)\gamma^{-\lambda p^j}\left[c_\lambda\bigl(\gamma^{p^j}\bigr)-p^\lambda\sum_{s\geq \lambda+1}V^s_{\lambda,1}\bigl(\gamma^{p^j}\bigr) d_s\bigl(\gamma^{p^{j+1}}\bigr)\right],\] and after subtracting $d_\lambda(\gamma)$ exactly as given in \eqref{eq:omega-section-lambda} we find that \begin{multline}\label{eq:omega-section-proof-2}\gamma^{\lambda-p\lambda}d_\lambda(\gamma^p)-d_\lambda(\gamma)=-\frac{\gamma^\lambda}{e}\sum_{j=1}^{e-1}\gamma^{-\lambda p^j}\left[c_\lambda\bigl(\gamma^{p^j}\bigr)-p^\lambda\sum_{s\geq \lambda+1}V^s_{\lambda,1}\bigl(\gamma^{p^j}\bigr) d_s\bigl(\gamma^{p^{j+1}}\bigr)\right]\\-\frac{\gamma^\lambda}{e}(1-e)\gamma^{-\lambda}\left[c_\lambda\bigl(\gamma\bigr)-p^\lambda\sum_{s\geq \lambda+1}V^s_{\lambda,1}(\gamma) d_s\bigl(\gamma^p\bigr)\right]\\ = -\frac{\gamma^\lambda}{e}\sum_{j=0}^{e-1}\gamma^{-\lambda p^j}\left[c_\lambda\bigl(\gamma^{p^j}\bigr)-p^\lambda\sum_{s\geq \lambda+1}V^s_{\lambda,1}\bigl(\gamma^{p^j}\bigr) d_s\bigl(\gamma^{p^{j+1}}\bigr)\right]+c_\lambda(\gamma)-p^\lambda\sum_{s\geq \lambda+1} V^s_{\lambda,1}(\gamma)d_s(\gamma^p),\end{multline} with the convention that the sum $\sum_{j=1}^{e-1}$ is empty in case $e=1$. Putting \eqref{eq:omega-section-proof-1} and \eqref{eq:omega-section-proof-2} together establishes \eqref{eq:omega-section-c}. Since $\mathbf{c}=\tilde{\mathbf{c}}$ is a non-trivial sufficient for $\mathbf{c}\in\mathrm{im}(\mathcal{D}_{\lambda,\tau})$, by Lemma~\ref{lem:d-map-kernel} it must also be necessary, since $\mathrm{im}(\mathcal{D}_{\lambda,\tau})$ has codimension $1$ in $\mathcal{S}^{\mathcal{C}(\tau)}$. This concludes the proof. \end{proof}


\section{Mahler dispersion and $\lambda$-Mahler summability}\label{sec:summable-dispersion}

Our goal in this section is to prove Theorem~\ref{thm:summable-dispersion}: if $f\in\mathbb{K}(x)$ is $\lambda$-Mahler summable for some $\lambda\in\mathbb{Z}$, then it has non-zero dispersion almost everywhere, generalizing to arbitrary $\lambda\in\mathbb{Z}$ the analogous result for $\lambda=0$ obtained in \cite[Corollary~3.2]{arreche-zhang:2022}. In spite of the exceptions that occur for $\lambda\geq 1$, this will be an essential tool in our proofs that twisted Mahler discrete residues comprise a complete obstruction to $\lambda$-Mahler summability. 

In the following preliminary result, which generalizes \cite[Proposition~3.1]{arreche-zhang:2022} from the special case $\lambda=0$ to arbitrary $\lambda\in\mathbb{Z}$, we relate the Mahler dispersions of a $\lambda$-Mahler summable $f\in\mathbb{K}(x)$ to those of a certificate $g\in\mathbb{K}(x)$ such that $f=\Delta_\lambda(g)$.

\begin{prop}\label{prop:summable-dispersion} Let $f,g\in\mathbb{K}(x)$ and $\lambda\in\mathbb{Z}$ such that $f=\Delta_\lambda(g)$.

\begin{enumerate} 
\item If $\infty\in\mathrm{supp}(f)$, then $\mathrm{disp}(f,\infty)=\mathrm{disp}(g,\infty)+1$, except in case $\lambda\neq 0$ and the Laurent polynomial component $f_{\infty}=c_0\in\mathbb{K}^\times$, in which case we must have \mbox{$g_{\infty}=c_0/(p^\lambda-1)$}.
\item If $\infty\neq\tau\in\mathrm{supp}(f)$, then $\mathrm{disp}(f,\tau)=\mathrm{disp}(g,\tau)+1$, with the convention that $\infty+1=\infty$, except possibly in case that: $\mathcal{C}(\tau)$ is non-empty; and $\lambda\geq 1$; and the order of every pole of $g$ in $\mathcal{C}(\tau)$ is exactly~$\lambda$.
\end{enumerate} 
\end{prop}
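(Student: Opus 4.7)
The plan is to generalize the blueprint of \cite[Proposition~3.1]{arreche-zhang:2022}, starting with componentwise reduction via Lemmas~\ref{lem:rational-decomposition} and~\ref{lem:tree-decomposition}, which together ensure that $f_\infty = \Delta_\lambda(g_\infty)$ and $f_\tau = \Delta_\lambda(g_\tau)$ for every $\tau\in\mathcal{T}$. Two computations will be used throughout: writing $g_\infty = \sum_i c_i x^i$, the coefficient of $x^k$ in $p^\lambda\sigma(g_\infty) - g_\infty$ is $p^\lambda c_{k/p} - c_k$ (with the convention $c_{k/p} := 0$ when $p \nmid k$); and since $\mathrm{ord}_\alpha(\sigma(g)) = \mathrm{ord}_{\alpha^p}(g)$, we have $\mathrm{ord}_\alpha(f) = \max\bigl(\mathrm{ord}_{\alpha^p}(g), \mathrm{ord}_\alpha(g)\bigr)$ whenever these two orders differ.

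For part~(1), if $g_\infty \in \mathbb{K}$ is constant then $f_\infty = (p^\lambda-1)g_\infty$ is also constant, and being non-zero by hypothesis forces $\lambda \neq 0$ and $g_\infty = f_\infty/(p^\lambda-1) \in \mathbb{K}^\times$, placing us in the stated exception. Otherwise, choose a non-trivial maximal trajectory $\theta$ represented by $i\neq 0$ with $p\nmid i$ whose intersection with the support of $g_\infty$ realizes $\mathrm{disp}(g, \infty) = b - a$, where $a, b$ are respectively the smallest and largest $n$ with $c_{ip^n} \neq 0$. Minimality of $a$ gives $c_{ip^{a-1}} = 0$ and maximality of $b$ gives $c_{ip^{b+1}} = 0$, so the $f_\infty$-coefficients at $x^{ip^a}$ and $x^{ip^{b+1}}$ equal $-c_{ip^a}$ and $p^\lambda c_{ip^b}$ respectively, both non-zero; this yields $\mathrm{disp}(f,\infty) \geq b - a + 1$. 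Conversely, any $k\neq 0$ witnessing $\mathrm{disp}(f, \infty) = D$ supplies $i_1 \in \{k, k/p\}$ and $i_2 \in \{kp^D, kp^{D-1}\}$ with $c_{i_1}, c_{i_2} \neq 0$ and $i_2/i_1 = p^e$ for some $e \geq D-1$, giving $\mathrm{disp}(g, \infty) \geq D - 1$.

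For the non-exceptional case of part~(2), assume $\mathrm{sing}(g, \tau) \cap \mathcal{C}(\tau) = \emptyset$, so $\mathrm{disp}(g,\tau) < \infty$ by Lemma~\ref{lem:infinite-dispersion}. Pick $\alpha_1$, $\alpha_2 := \alpha_1^{p^d} \in \mathrm{sing}(g, \tau)$ witnessing $d := \mathrm{disp}(g, \tau)$. Maximality of $d$ forces $\alpha_2^p = \alpha_1^{p^{d+1}} \notin \mathrm{sing}(g, \tau)$, and that no $p$-th root $\alpha_1' \in \tau$ of $\alpha_1$ lies in $\mathrm{sing}(g, \tau)$ (lest the chain extend to length $d+1$). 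Hence $\mathrm{ord}_{(\alpha_1')^p}(g) > 0 = \mathrm{ord}_{\alpha_1'}(g)$ and $\mathrm{ord}_{\alpha_2}(g) > 0 = \mathrm{ord}_{\alpha_2^p}(g)$, so $\mathrm{ord}_{\alpha_1'}(f) = \mathrm{ord}_{\alpha_1}(g) > 0$ and $\mathrm{ord}_{\alpha_2}(f) = \mathrm{ord}_{\alpha_2}(g) > 0$. Since $\alpha_2 = (\alpha_1')^{p^{d+1}}$, this gives $\mathrm{disp}(f, \tau) \geq d+1$. The reverse $\mathrm{disp}(f, \tau) \leq \mathrm{disp}(g, \tau) + 1$ follows from the observation that any $\beta \in \mathrm{sing}(f, \tau)$ has $\beta$ or $\beta^p$ lying in $\mathrm{sing}(g, \tau)$.

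The hard part is the cyclic case $\mathrm{sing}(g, \tau) \cap \mathcal{C}(\tau) \neq \emptyset$, for which $\mathrm{disp}(g, \tau) = \infty$ and one must verify $\mathrm{disp}(f, \tau) = \infty$ outside of the stated exception. I would invoke Lemma~\ref{lem:cyclic-component-short} to translate "$f$ has no pole in $\mathcal{C}(\tau)$" into the condition $\mathcal{D}_{\lambda, \tau}(\mathbf{d}) = \mathbf{0}$, where $\mathbf{d}$ records the cyclic-component coefficients of $g_\tau$. Lemma~\ref{lem:d-map-kernel} then does the heavy lifting: $\ker(\mathcal{D}_{\lambda, \tau}) = 0$ when $\lambda \leq 0$, forcing $\mathbf{d} = \mathbf{0}$ and hence contradicting the hypothesis; while for $\lambda \geq 1$, $\ker(\mathcal{D}_{\lambda, \tau}) = \mathbb{K}\cdot\mathbf{w}^{(\lambda)}$ with $w^{(\lambda)}_k(\gamma) = 0$ for $k > \lambda$ and $w^{(\lambda)}_\lambda(\gamma) = \gamma^\lambda \neq 0$ for every $\gamma \in \mathcal{C}(\tau)$. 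Consequently, any non-zero $\mathbf{d} \in \ker(\mathcal{D}_{\lambda, \tau})$ corresponds exactly to $g$ having a pole of order exactly $\lambda$ at every $\gamma \in \mathcal{C}(\tau)$, which recovers the stated exception.
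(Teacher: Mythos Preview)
Your proof is correct and follows essentially the same approach as the paper: componentwise reduction, a trajectory-based analysis for part~(1), a witness-extension argument for the finite-dispersion subcase of part~(2), and the combination of Lemma~\ref{lem:cyclic-component-short} with Lemma~\ref{lem:d-map-kernel} for the cyclic subcase. The only cosmetic difference is that in part~(1) the paper establishes $\mathrm{disp}(f_\theta,\infty)=\mathrm{disp}(g_\theta,\infty)+1$ trajectory-by-trajectory and then takes the maximum, whereas you prove the two inequalities separately; both arguments are equivalent.
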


\begin{proof} (1).~First suppose that $\{0\}\neq\theta\in\mathbb{Z}/\mathcal{P}$ is such that $g_\theta\neq 0$, and let us write \[g_\theta=\sum_{j=0}^dc_{ip^j}x^{ip^j},\] where we assume that $c_ic_{ip^d}\neq 0$, \ie, that $\mathrm{disp}(g_\theta,\infty)=d$. Then \[\Delta_\lambda(g_\theta)=p^\lambda c_{ip^d}x^{ip^{d+1}}-c_ix^{i}+\sum_{j=1}^d(p^\lambda c_{ip^{j-1}}-c_{ip^j})x^{ip^j},\] from which it follows that $0\neq f_\theta=\Delta_\lambda(g_\theta)$ and $\mathrm{disp}(f_\theta,\infty)=\mathrm{disp}(\Delta_\lambda(g_\theta),\infty)=d+1$. Since in this case Definition~\ref{defn:dispersion} gives that \[\mathrm{disp}(f,\infty)=\mathrm{max}\left\{\mathrm{disp}\left(f_\theta,\infty\right) \ \middle| \ \{0\}\neq\theta\in\mathbb{Z}/\mathcal{P}, f_\theta\neq 0\right\},\] and similarly for $\mathrm{disp}(g,\infty)$, we find that $\mathrm{disp}(f,\infty)=\mathrm{disp}(g,\infty)+1$ provided that the Laurent component $g_{\infty}\in\mathbb{K}[x,x^{-1}]$ is not constant.

In any case, by Lemma~\ref{lem:stable-support}, if $\infty\in\mathrm{supp}(f)$ then $\infty\in\mathrm{supp}(g)$. In this case, we have $0\neq f_{\infty}=\Delta_\lambda(g_{\infty})$, since $\infty\in\mathrm{supp}(f)$, and if $\lambda=0$ it follows in particular $g_{\infty}\notin\mathbb{K}$. In case $\lambda\neq 0$ and $f_{\infty}=c_0\in\mathbb{K}^\times$, the computation above shows that $g_\theta=0$ for every $\{0\}\neq \theta\in\mathbb{Z}/\mathcal{P}$, and we see that $g_{\infty}=g_{\{0\}}=c_0/(p^\lambda-1)$. 

(2).~Suppose $\tau\in\mathrm{supp}(f)$, and therefore $\tau\in\mathrm{supp}(g)$ by Lemma~\ref{lem:stable-support}. We consider two cases, depending on whether $\mathrm{disp}(g,\tau)$ is finite or not.

If $\mathrm{disp}(g,\tau)=:d<\infty$, let $\alpha\in \tau$ be such that $\alpha$ and $\alpha^{p^d}$ are poles of $g$. Choose $\gamma\in \tau$ such that $\gamma^p=\alpha$. Then $\gamma$ is a pole of $\sigma(g)$ but not of $g$ (by the maximality of $d$), and therefore $\gamma$ is a pole of $f$. On the other hand, $\gamma^{p^{d+1}}=\alpha^{p^d}$ is a pole of $g$ but not of $\sigma(g)$, for if $\alpha^{p^d}$ were a pole of $\sigma(g)$ then $\alpha^{p^{d+1}}$ would be a pole of $g$, contradicting the maximality of $d$. Therefore $\gamma^{p^{d+1}}$ is a pole of $f$. It follows that $\mathrm{disp}(f,\tau)\geq d+1$. One can show equality by contradiction: if $\alpha\in\tau$ is a pole of $f$ such that $\alpha^{p^s}$ is also a pole of $f$ for some $s> d+1$, then each of $\alpha$ and $\alpha^{p^s}$ is either a pole of $g$ or a pole of $\sigma(g)$. If $\alpha^{p^s}$ is a pole of $g$, then $\alpha$ cannot also be a pole of $g$, for this would contradict the maximality of $d$, whence $\alpha$ must be a pole of $\sigma(g)$, but then $\alpha^p$ would have to be a pole of $g$, still contradicting the maximality of $d$. Hence $\alpha^{p^s}$ must be a pole of $\sigma(g)$. But then $\alpha^{p^{s+1}}$ is a pole of $g$, which again contradicts the maximality of $d$ whether $\alpha$ is a pole of $\sigma(g)$ or of $g$. This concludes the proof that $\mathrm{disp}(f,\tau)=\mathrm{disp}(g,\tau)+1$ in this case where $\mathrm{disp}(g,\tau)<\infty$.

If $\mathrm{disp}(g,\tau)=\infty$ then $g$ has a pole in $\mathcal{C}(\tau)$ by Lemma~\ref{lem:infinite-dispersion}. If $f$ also has a pole in $\mathcal{C}(\tau)$ then $\mathrm{disp}(f,\tau)=\infty=\mathrm{disp}(g,\tau)+1$ and we are done. So let us suppose $\mathrm{disp}(f,\tau)<\infty$ and conclude that $g$ has a pole of order exactly $\lambda$ at every $\gamma\in\mathcal{C}(\tau)$. In this case, writing \[0\neq\mathcal{C}(g_\tau)=\sum_{k\in\mathbb{N}}\sum_{\gamma\in\mathcal{C}(\tau)}\frac{d_k(\gamma)}{(x-\gamma)^k}\qquad\text{and}\qquad 0=\mathcal{C}(f_\tau)=\sum_{k\in\mathbb{N}}\sum_{\gamma\in\mathcal{C}(\tau)}\frac{c_k(\gamma)}{(x-\gamma)^k}\] as in Definition~\ref{defn:cyclic-component}, it follows from Lemma~\ref{lem:cyclic-component-short} that $\mathcal{D}_{\lambda,\tau}(\mathbf{d})=\mathbf{c}$, where $\mathbf{d}:=(d_k(\gamma))$ and $\mathbf{c}:=(c_k(\gamma))$. By Lemma~\ref{lem:d-map-kernel}, $\mathbf{d}=\omega\mathbf{w}^{(\lambda)}$ for some $0\neq\omega\in\mathbb{K}$, where $\mathbf{w}^{(\lambda)}=(w^{(\lambda)}_k(\gamma)$ is the unique vector specified in Lemma~\ref{lem:d-map-kernel}, which has every component $w^{(\lambda)}_k(\gamma)=0$ for $k>\lambda$ and each component $w^{(\lambda)}_\lambda(\gamma)=\gamma^\lambda\neq 0$ for $\gamma\in\mathcal{C}(\tau)$.\end{proof}

In the next result we deduce from Proposition~\ref{prop:summable-dispersion} that if $f\in\mathbb{K}(x)$ is $\lambda$-Mahler summable then $f$ has non-zero dispersion almost everywhere. For the applications in the sequel, it will be essential for us to have these restrictions be defined intrinsically in terms of $f$, with no regard to any particular choice of certificate $g\in\mathbb{K}(x)$ such that $f=\Delta_\lambda(g)$.

\begin{thm} \label{thm:summable-dispersion} Let $\lambda\in\mathbb{Z}$ and and suppose that $f\in \mathbb{K}(x)$ is $\lambda$-Mahler summable. 
\begin{enumerate}
\item If $\infty\in\mathrm{supp}(f)$ and either $\lambda=0$ or $f_{\infty}\notin \mathbb{K}$ then ${\mathrm{disp}(f,\infty)>0}$. 
\item If $\lambda\leq 0$ then $\mathrm{disp}(f,\tau)>0$ for every $\infty\neq\tau\in\mathrm{supp}(f)$.
\item If $\lambda\geq 1$ and $\infty\neq\tau\in\mathrm{supp}(f)$ is such that either $\tau\in\mathcal{T}_0$ or $\mathrm{ord}(f,\tau)\neq \lambda$ then $\mathrm{disp}(f,\tau)>0$.
\end{enumerate}
\end{thm}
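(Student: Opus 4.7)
The plan is to fix a certificate $g\in\mathbb{K}(x)$ with $f=\Delta_\lambda(g)$, as provided by the $\lambda$-Mahler summability of $f$, and appeal to the formula $\mathrm{disp}(f,\tau)=\mathrm{disp}(g,\tau)+1$ from Proposition~\ref{prop:summable-dispersion}. Since $\tau\in\mathrm{supp}(f)$ implies $\tau\in\mathrm{supp}(g)$ (by Lemma~\ref{lem:stable-support}(4) when $\tau\neq\infty$, and by the parallel argument from the proof of Proposition~\ref{prop:summable-dispersion}(1) when $\tau=\infty$), the dispersion $\mathrm{disp}(g,\tau)\geq 0$ is defined, and this formula would give $\mathrm{disp}(f,\tau)\geq 1$ at once---provided the hypotheses rule out the exceptional cases described in Proposition~\ref{prop:summable-dispersion}.

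Parts (1) and (2) should be quick: the hypothesis in (1) is precisely the negation of the single exception in Proposition~\ref{prop:summable-dispersion}(1), while the exception in Proposition~\ref{prop:summable-dispersion}(2) requires $\lambda\geq 1$ and is thus incompatible with $\lambda\leq 0$. In part (3), the alternative $\tau\in\mathcal{T}_0$ similarly rules out the exception (which requires $\mathcal{C}(\tau)\neq\emptyset$). The real work is the remaining subcase: $\lambda\geq 1$, $\tau\in\mathcal{T}_+$, and $\mathrm{ord}(f,\tau)\neq\lambda$.

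For this last subcase, I would argue by contradiction: assume $\mathrm{disp}(f,\tau)=0$, so Proposition~\ref{prop:summable-dispersion}(2) places us in the exceptional case where $g$ has a pole of order exactly $\lambda$ at every $\gamma\in\mathcal{C}(\tau)$; in particular $\mathrm{ord}(g,\tau)\geq \lambda$. By Lemma~\ref{lem:stable-support}(7) this equals $\mathrm{ord}(f,\tau)$, so the hypothesis $\mathrm{ord}(f,\tau)\neq \lambda$ forces $m:=\mathrm{ord}(g,\tau)>\lambda$. Setting $S_m:=\{\alpha\in\tau \ | \ \mathrm{ord}_\alpha(g)=m\}$, this is a non-empty finite subset of $\tau\setminus\mathcal{C}(\tau)$, since poles of $g$ in $\mathcal{C}(\tau)$ have the strictly smaller order $\lambda$.

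The contradiction will come from a counting-plus-orbit argument producing two poles of $f$ in $\tau$ related by a $p$-power. Since every element of $\tau$ has all $p$ of its $p$-th roots in $\tau$, the $p$-power map $\phi:\alpha\mapsto\alpha^p$ is $p$-to-$1$ on $\tau$, and the inequality $|\phi^{-1}(S_m)\cap\tau|=p|S_m|>|S_m|$ yields some $\alpha\in\tau$ with $\alpha^p\in S_m$ and $\alpha\notin S_m$. Then $\mathrm{ord}_\alpha(g(x^p))=\mathrm{ord}_{\alpha^p}(g)=m>\mathrm{ord}_\alpha(g)$, so $f=p^\lambda g(x^p)-g(x)$ has order exactly $m$ at $\alpha$, since the leading terms cannot cancel when the orders of the two summands differ. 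Since $\alpha^p\in\tau\setminus\mathcal{C}(\tau)$ but the orbit $\alpha^p,\alpha^{p^2},\dots$ eventually enters $\mathcal{C}(\tau)$ (which is disjoint from $S_m$), I would take the smallest $k\geq 1$ with $\alpha^{p^k}\in S_m$ and $\alpha^{p^{k+1}}\notin S_m$; the same reasoning gives $\mathrm{ord}_{\alpha^{p^k}}(f)=m$, so $\alpha$ and $\alpha^{p^k}$ are both poles of $f$ in $\tau$, contradicting $\mathrm{disp}(f,\tau)=0$. The main obstacle I foresee is this orbit/counting step; once settled, the remainder is routine bookkeeping on top of Proposition~\ref{prop:summable-dispersion}.
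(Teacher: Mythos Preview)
Your proposal is correct and follows the same plan as the paper: invoke Proposition~\ref{prop:summable-dispersion} directly for parts (1), (2), and the $\tau\in\mathcal{T}_0$ alternative in (3), then for the residual case $\lambda\geq 1$, $\tau\in\mathcal{T}_+$, $\mathrm{ord}(f,\tau)\neq\lambda$ produce two poles of $f$ in $\tau$ related by a $p$-power. The only difference is the bookkeeping in that last subcase: the paper first disposes of $\mathrm{ord}(f,\tau)<\lambda$ separately and then, for $m>\lambda$, lists the $(p+1)s$ elements $\zeta_p^i\beta_j,\alpha_j$ and asserts they are pairwise distinct, whereas your orbit argument (choose $\alpha\notin S_m$ with $\alpha^p\in S_m$ by counting, then take the first exit time $k$ so that $\alpha^{p^k}\in S_m$, $\alpha^{p^{k+1}}\notin S_m$) is actually more robust---the paper's pairwise-distinctness claim can fail when some $\alpha_{j'}^p=\alpha_j$, though the conclusion still holds.
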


\begin{proof}
Suppose $f\in\mathbb{K}(x)$ is $\lambda$-Mahler summable and let $g\in\mathbb{K}(x)$ such that $f=\Delta_\lambda(g)$.

(1) and (2).~If $\infty\in\mathrm{supp}(f)$ then by Proposition~\ref{prop:summable-dispersion} $\mathrm{disp}(f,\infty)=\mathrm{disp}(g,\infty)+1>0$ provided that either $\lambda=0$ or $f_{\infty}\notin\mathbb{K}$. If $\lambda\leq 0$ then $\mathrm{disp}(f,\tau)=\mathrm{disp}(g,\tau)+1>0$ for all $\infty\neq\tau\in\mathrm{supp}(f)$ by Proposition~\ref{prop:summable-dispersion}. 

(3).~Assuming that $\lambda\geq 1$, we know by Proposition~\ref{prop:summable-dispersion} that $\mathrm{disp}(f,\tau)=\mathrm{disp}(g,\tau)+1>0$ for every $\infty\neq\tau\in\mathrm{supp}(f)$, except possibly in case $\tau\in\mathcal{T}_+$ and every pole of $g$ in $\mathcal{C}(\tau)$ has order exactly $\lambda$. Thus our claim is already proved for $\tau\in\mathcal{T}_0$. So from now on we suppose $\tau\in\mathcal{T}_+$. By Lemma~\ref{lem:stable-support}(7), $\mathrm{ord}(f,\tau)=\mathrm{ord}(g,\tau)$, and therefore if $\mathrm{ord}(f,\tau)<\lambda$, there are no poles of $g$ of order $\lambda$ anywhere in $\tau$, let alone in $\mathcal{C}(\tau)$, so $\mathrm{disp}(f,\tau)=\mathrm{disp}(g,\tau)+1>0$ by Proposition~\ref{prop:summable-dispersion} in this case also. Moreover, if $f$ has a pole of any order in $\mathcal{C}(\tau)$, then $\mathrm{disp}(f,\tau)=\infty>0$ by Lemma~\ref{lem:infinite-dispersion}. It remains to show that if $m:=\mathrm{ord}(f,\tau)>\lambda$ then $\mathrm{disp}(f,\tau)>0$. In this case, even though $\mathrm{ord}(g,\tau)=m>\lambda$ by Lemma~\ref{lem:stable-support} it may still happen that $g$ has a pole of order exactly $\lambda$ at every $\gamma\in\mathcal{C}(\tau)$ and yet the higher-order poles of $g$ lie in the complement $\tau-\mathcal{C}(\tau)$, in which case Proposition~\ref{prop:summable-dispersion} remains silent. So let $\alpha_1,\dots,\alpha_s\in\mathrm{sing}(g,\tau)$ be all the pairwise-distinct elements at which $g$ has a pole of order $m>\lambda$. 
Choose $\beta_j\in\tau$ such that $\beta_j^p=\alpha_j$ for each $j=1,\dots,s$, and let us write
\begin{gather*}
g_\tau=\sum_{j=1}^s\frac{d_j}{(x-\alpha_j)^m}+(\text{lower-order terms}),\quad \text{so that}\\
f_\tau = \sum_{j=1}^s\left(\sum_{i=0}^{p-1}\frac{p^\lambda V^m_{m,1}(\zeta_p^i\beta_j)\cdot d_j}{(x-\zeta_p^i\beta_j)^m}-\frac{d_j}{(x-\alpha_j)^m}\right)+(\text{lower-order-terms})
\end{gather*}
by Lemma~\ref{lem:mahler-coefficients-computations}. If any $\alpha_j\in\mathcal{C}(\tau)$, then we already have $\mathrm{disp}(f,\tau)=\mathrm{disp}(g,\tau)+1>0$ by Proposition~\ref{prop:summable-dispersion}. So we can assume without loss of generality that no $\alpha_j$ belongs to $\mathcal{C}(\tau)$, which implies that the $(p+1)\cdot s$ apparent poles $\zeta_p^i\beta_j$ and $\alpha_j$ of $f_\tau$ of order $m$ are pairwise distinct, and in particular no cancellations occur and these are all true poles of $f$ of order $m$. Hence $\mathrm{disp}(f,\tau)\geq 1$ also in this last case where $\mathrm{ord}(f,\tau)=m>\lambda$.\end{proof}

\begin{rem}\label{rem:lambda-exceptions} The exceptions in Theorem~\ref{thm:summable-dispersion} cannot be omitted. If $\lambda\neq 0$ then every $\Delta_\lambda(\frac{c}{p^\lambda-1})=c\in\mathbb{K}$ is $\lambda$-Mahler summable and has $\mathrm{disp}(c,\infty)=0$ whenever $c\neq 0$. If $\lambda\geq 1$ then for any $\gamma\in\mathcal{C}(\tau)$ with $\varepsilon(\tau)=:e\geq 1$ one can construct (cf.~Section~\ref{sec:torsion-residues}) $g=\sum_{k=1}^\lambda\sum_{\ell=0}^{e-1}c_{k,\ell}\cdot(x-\gamma^{p^\ell})^{-k}$ such that $\mathrm{disp}(\Delta_\lambda(g),\tau)=0$. The simplest such example is with $\lambda,\gamma,e=1$ (and $p\in\mathbb{Z}_{\geq 2}$ still arbitrary): \[f:=\Delta_1\left(\frac{1}{x-1}\right)=\frac{p}{x^p-1}-\frac{1}{x-1}=\frac{pV^1_{1,1}(1)-1}{x-1}+\sum_{i=1}^{p-1}\frac{pV^1_{1,1}(\zeta_p^i)}{x-\zeta_p^i}=\sum_{i=1}^{p-1}\frac{\zeta_p^i}{x-\zeta_p^i},\] which is $1$-Mahler summable but has $\mathrm{disp}(f,\tau(1))=0$. More generally, all other such examples for arbitrary $\lambda\geq 1$  and $\tau\in\mathcal{T}_+$, of $f\in\mathbb{K}(x)$ such that $f_\tau$ is $\lambda$-Mahler summable but $\mathrm{disp}(f,\tau)=0$, arise essentially from the basic construction $f_\tau:=\Delta_\lambda(g_\tau)$ with \[g_\tau=\sum_{k=1}^\lambda\sum_{\gamma\in\mathcal{C}(\tau)}\frac{\omega \cdot w^{(\lambda)}_k(\gamma)}{(x-\gamma)^k}\] for an arbitrary constant $0\neq \omega\in\mathbb{K}$ and the vector $\mathbf{w}^{(\lambda)}=(w^{(\lambda)}_k(\gamma))$ defined in Lemma~\ref{lem:d-map-kernel}.
\end{rem}


\section{Twisted Mahler discrete residues}\label{sec:residues}

Our goal in this section is to define the $\lambda$-Mahler discrete residues of $f(x)\in\mathbb{K}(x)$ for $\lambda\in\mathbb{Z}$ and prove our Main~Theorem in Section~\ref{sec:proof}, that these $\lambda$\nobreakdash-Mahler discrete residues comprise a complete obstruction to $\lambda$\nobreakdash-Mahler summability. We begin with the relatively simple construction of $\lambda$-Mahler discrete residues at $\infty$ (for Laurent polynomials), followed by the construction of $\lambda$-Mahler discrete residues at Mahler trees $\tau\in\mathcal{T}=\mathcal{T}_0\cup\mathcal{T}_+$ (see Definition~\ref{defn:cycles}), first for non-torsion $\tau\in\mathcal{T}_0$, and finally for torsion $\tau\in\mathcal{T}_+$, in increasing order of complexity, and prove separately in each case that these $\lambda$-Mahler discrete residues comprise a complete obstruction to the $\lambda$-Mahler summability of the corresponding components of $f$.


\subsection{Twisted Mahler discrete residues at infinity}\label{sec:laurent-residues}

We now define the $\lambda$-Mahler discrete residue of $f\in\mathbb{K}(x)$ at $\infty$ in terms of the Laurent polynomial component $f_{\infty}\in\mathbb{K}[x,x^{-1}]$ of $f$ in \eqref{eq:f-decomposition}, and show that it forms a complete obstruction to the $\lambda$-Mahler summability of $f_{\infty}$. The definition and proof in this case are both straightforward, but they provide helpful moral guidance for the analogous definitions and proofs in the case of $\lambda$-Mahler discrete residues at Mahler trees $\tau\in\mathcal{T}$.

\begin{defn}\label{defn:laurent-residues} For $f\in\mathbb{K}(x)$ and $\lambda\in\mathbb{Z}$, the $\lambda$-\emph{Mahler discrete residue} of $f$ at $\infty$ is the vector \[\mathrm{dres}_\lambda(f,\infty)=\Bigl(\mathrm{dres}_\lambda(f,\infty)_\theta\Bigr)_{\theta\,\in\,\mathbb{Z}/\mathcal{P}}\in\bigoplus_{\theta\,\in\,\mathbb{Z}/\mathcal{P}}\mathbb{K}\] defined as follows. Write $f_{\infty}=\sum_{\theta\,\in\,\mathbb{Z}/\mathcal{P}}f_\theta$ as in Definition~\ref{defn:trajectory-projection}, and write each component $f_\theta=\sum_{j= 0}^{h_\theta} c_{ip^j}x^{ip^j}$ with $p\nmid i$ whenever $i\neq 0$ (that is, with each $i$ initial in its maximal $\mathcal{P}$-trajectory $\theta$), and where $h_\theta=0$ if $f_\theta=0$ and otherwise $h_\theta\in\mathbb{Z}_{\geq 0}$ is as large as possible such that $c_{ip^{h_\theta}}\neq 0$. Then we set
\[ \mathrm{dres}_\lambda(f,\infty)_\theta:=p^{\lambda h_\theta}\sum_{j= 0}^{h_\theta} p^{-\lambda j}c_{ip^j} \quad \text{for} \ \theta\neq\{0\}; \qquad \text{and}\qquad \mathrm{dres}_\lambda(f,\infty)_{\{0\}}:=\begin{cases} c_0 & \text{if} \ \lambda=0; \\ 0 & \text{if} \ \lambda\neq 0.\end{cases}\]\end{defn}

\begin{prop}\label{prop:laurent-residues} For $f\in\mathbb{K}(x)$ and $\lambda\in\mathbb{Z}$, the component $f_{\infty}\in \mathbb{K}[x,x^{-1}]$ in \eqref{eq:f-decomposition} is $\lambda$-Mahler summable if and only if $\mathrm{dres}_\lambda(f,\infty)=\mathbf{0}$.
\end{prop}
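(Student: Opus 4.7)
The plan is to use Lemma~\ref{lem:trajectory-projection} to reduce the summability of $f_\infty$ to the summability of each trajectory component $f_\theta$ separately, and then to verify in each case the desired equivalence by an explicit computation.

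First I would handle the trivial trajectory $\theta = \{0\}$, where $f_{\{0\}} = c_0 \in \mathbb{K}$. Any candidate certificate must lie in $\mathbb{K}[x,x^{-1}]_{\{0\}} = \mathbb{K}$, so summability amounts to finding $d_0 \in \mathbb{K}$ with $(p^\lambda - 1)d_0 = c_0$. When $\lambda = 0$ this forces $c_0 = 0$, matching $\mathrm{dres}_0(f,\infty)_{\{0\}} = c_0$; when $\lambda \neq 0$ the scalar $p^\lambda - 1$ is invertible, so a certificate always exists, matching the convention $\mathrm{dres}_\lambda(f,\infty)_{\{0\}} = 0$.

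For a nontrivial trajectory $\theta = \{ip^n : n \geq 0\}$ with $p \nmid i$, I would write $f_\theta = \sum_{j=0}^{h} c_{ip^j} x^{ip^j}$ (with $c_{ip^h} \neq 0$ when $f_\theta \neq 0$) and compare degrees in the candidate identity $f_\theta = \Delta_\lambda(g_\theta)$ with $g_\theta = \sum_{j=0}^{H} d_{ip^j} x^{ip^j}$. Expanding
\[
\Delta_\lambda(g_\theta) = -d_i x^i + \sum_{j=1}^{H} (p^\lambda d_{ip^{j-1}} - d_{ip^j}) x^{ip^j} + p^\lambda d_{ip^H} x^{ip^{H+1}}
\]
forces $H = h-1$ and determines the $d_{ip^j}$ recursively from the bottom up: $d_i = -c_i$ and $d_{ip^j} = p^\lambda d_{ip^{j-1}} - c_{ip^j}$ for $1 \leq j \leq h-1$, giving the closed form $d_{ip^j} = -\sum_{k=0}^{j} p^{(j-k)\lambda} c_{ip^k}$. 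The sole remaining equation, from the top-degree coefficient, is $p^\lambda d_{ip^{h-1}} = c_{ip^h}$, which after substitution collapses to $\sum_{k=0}^{h} p^{(h-k)\lambda} c_{ip^k} = 0$, i.e.\ $\mathrm{dres}_\lambda(f,\infty)_\theta = 0$.

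This telescoping calculation simultaneously yields both directions for each $\theta \neq \{0\}$: if $f_\theta$ is $\lambda$-Mahler summable the top-coefficient constraint forces the residue to vanish, and conversely if the residue vanishes the closed-form recursion produces an explicit certificate $g_\theta \in \mathbb{K}[x,x^{-1}]_\theta$. Combining with the $\theta = \{0\}$ case and Lemma~\ref{lem:trajectory-projection} gives the proposition. The main (minor) obstacle is just matching the normalization $p^{\lambda h_\theta}$ in Definition~\ref{defn:laurent-residues} to the form of the top-degree obstruction, which is purely bookkeeping.
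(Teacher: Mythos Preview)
Your proof is correct and takes a genuinely different route from the paper's. The paper constructs, for each $\{0\}\neq\theta\in\mathbb{Z}/\mathcal{P}$, the reduced element $\bar{f}_{\lambda,\theta}:=f_\theta+\sum_{j=0}^{h_\theta}\Delta_\lambda^{(h_\theta-j)}(c_{ip^j}x^{ip^j})=\mathrm{dres}_\lambda(f,\infty)_\theta\cdot x^{ip^{h_\theta}}$, invokes Lemma~\ref{lem:mahler-coefficients-computations} to conclude that $f_\theta$ and $\bar{f}_{\lambda,\theta}$ are simultaneously $\lambda$-Mahler summable, and then appeals to the dispersion obstruction Theorem~\ref{thm:summable-dispersion} to rule out summability of a nonzero monomial. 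Your approach instead solves the finite linear system $\Delta_\lambda(g_\theta)=f_\theta$ directly, extracting the residue as the unique compatibility condition at the top degree; this is more elementary and entirely self-contained, requiring neither Lemma~\ref{lem:mahler-coefficients-computations} nor Theorem~\ref{thm:summable-dispersion}. The paper's approach, by contrast, is the template that it later reuses (in vastly more complicated form) at Mahler trees, where a direct linear-system computation is not feasible; so the paper sacrifices a little elegance here for structural uniformity. One small omission in your write-up: when $h=0$ (so $f_\theta=c_ix^i$ with $c_i\neq 0$) the ``$H=h-1$'' step degenerates, but the degree-comparison argument still shows no nonzero $g_\theta$ can work, and the residue equals $c_i\neq 0$, so the conclusion holds.
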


\begin{proof} By Lemma~\ref{lem:trajectory-projection}, $f_{\infty}$ is $\lambda$-Mahler summable if and only if $f_\theta$ is $\lambda$-Mahler summable for all $\theta\in\mathbb{Z}/\mathcal{P}$. We shall show that $f_\theta$ is $\lambda$-Mahler summable if and only if \mbox{$\mathrm{dres}_\lambda(f,\infty)_\theta=0$}. If $\lambda\neq 0$ then $f_{\{0\}}=\Delta_\lambda(\frac{c_0}{p^\lambda-1})$ is always $\lambda$-Mahler summable, whilst we have defined $\mathrm{dres}_\lambda(f,\infty)_{\{0\}}=0$ in this case. On the other hand, for $\lambda=0$, $f_{\{0\}}=\mathrm{dres}_0(f,\infty)_{\{0\}}$, and $\mathrm{disp}(f_{\{0\}},\infty)=0$ if $f_{\{0\}}\neq 0$, whilst if $f_{\{0\}}=0$ then it is clearly $\lambda$-Mahler summable. By Theorem~\ref{thm:summable-dispersion} in case $\lambda=0$, and trivially in case $\lambda\neq 0$, we conclude that $f_{\{0\}}$ is $\lambda$-Mahler summable if and only if $\mathrm{dres}_\lambda(f,\infty)_{\{0\}}=0$.

Now let us assume $\{0\}\neq \theta\in\mathbb{Z}/\mathcal{P}$ and let us write $ f_\theta=\sum_{j\geq 0} c_{ip^j}x^{ip^j}\in\mathbb{K}[x,x^{-1}]_\theta$, for the unique minimal $i\in \theta$ such that $p\nmid i$. If $f_\theta=0$ then we have nothing to show, so suppose $f_\theta\neq 0$ and let $h_\theta\in\mathbb{Z}_{\geq 0}$ be maximal such that $c_{ip^{h_\theta}}\neq 0$. Letting $\Delta^{(n)}_\lambda:=p^{\lambda n}\sigma^n -\mathrm{id}$ as in Lemma~\ref{lem:mahler-coefficients-computations}, we find that \[\bar{f}_{\lambda,\theta}:=f_\theta+\sum_{j=0}^{h_\theta}\Delta_\lambda^{(h_\theta-j)}(c_{ip^j}x^{ip^j})=\sum_{j=0}^{h_\theta}p^{\lambda(h_\theta-j)}c_{ip^j}x^{ip^{h_\theta}}+0=\mathrm{dres}_\lambda(f,\infty)_\theta\cdot x^{ip^{h_\theta}}.\] By Lemma~\ref{lem:mahler-coefficients-computations}, we see that $f_\theta$ is $\lambda$-Mahler summable if and only if $\bar{f}_{\lambda,\theta}$ is $\lambda$-Mahler summable.
Clearly, $\bar{f}_{\lambda,\theta}=0$ if and only if $\mathrm{dres}(f,\infty)_\theta=0$. We also see that $\mathrm{disp}(\bar{f}_{\lambda,\theta},\infty)=0$ if $\mathrm{dres}_\lambda(f,\infty)_\theta\neq 0$, in which case $\bar{f}_{\lambda,\theta}$ cannot be $\lambda$-Mahler summable by Theorem~\ref{thm:summable-dispersion}, and so $f_\theta$ cannot be $\lambda$-Mahler summable either. On the other hand, if $\bar{f}_{\lambda,\theta}=0$ then $f_\theta$ is $\lambda$-Mahler summable by Lemma~\ref{lem:mahler-coefficients-computations}. \end{proof}

\begin{rem} \label{rem:laurent-remainder} The factor of $p^{\lambda h_\theta}$ in the Definition~\ref{defn:laurent-residues} of $\mathrm{dres}_\lambda(f,\infty)_\theta$ for $\{0\}\neq \theta\in\mathbb{Z}/\mathcal{P}$ plays no role in deciding whether $f_{\infty}$ is $\lambda$-Mahler summable, but this normalization allows us to define uniformly the $\bar{f}_{\lambda,\theta}=\mathrm{dres}_\lambda(f,\infty)_\theta\cdot x^{ip^{h_\theta}}$ as the $\theta$-component of the $\bar{f}_\lambda\in \mathbb{K}(x)$ in the $\lambda$-\emph{Mahler reduction}~\eqref{eq:mahler-reduction}. For every $\{0\}\neq\theta\in\mathbb{Z}/\mathcal{P}$, we set $h_\theta(f)$ to be the $h_\theta$ defined in the course of the proof of Proposition~\ref{prop:laurent-residues} in case $f_\theta\neq 0$, and in all other cases we set $h_\theta(f):=0$.
\end{rem}


\subsection{Twisted Mahler discrete residues at Mahler trees: the non-torsion case} \label{sec:non-torsion-residues}

We now define the $\lambda$-Mahler discrete residues of $f\in\mathbb{K}(x)$ at non-torsion Mahler trees $\tau\in\mathcal{T}_0$ 
in terms of the partial fraction decomposition of the component $f_\tau\in\mathbb{K}(x)_\tau$ in Definition~\ref{defn:tree-projection}, and show that it forms a complete obstruction to the $\lambda$-Mahler summability of $f_\tau$. 

We begin by introducing some auxiliary notion, which already appeared in \cite{arreche-zhang:2022}, but with an unfortunately different choice of notation.

\begin{defn}\label{defn:bouquet} Let $\tau\in\mathcal{T}_0$, $\gamma\in\tau$, and $h\in\mathbb{Z}_{\geq 0}$. The \emph{bouquet} of height $h$ rooted at $\gamma$ is \[\beta_h(\gamma):=\left\{\alpha\in\tau \ | \ \alpha^{p^n}=\gamma \ \text{for some} \ 0\leq n \leq h\right\}.\]
\end{defn}

\begin{lem}[cf.~\protect{\cite[Lem.~4.4]{arreche-zhang:2022}}]\label{lem:bouquet} Let $\tau\in\mathcal{T}_0$ and $S\subset \tau$ be a finite non-empty subset. Then there exists a unique $\gamma\in\tau$ such that $S\subseteq\beta_h(\gamma)$ with $h$ as small as possible.
\end{lem}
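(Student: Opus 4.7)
The plan is to analyze the ``ancestor'' structure of the non-torsion Mahler tree $\tau$. For each $\alpha \in \tau$, define the ancestor set $A(\alpha) := \{\alpha^{p^n} : n \geq 0\} \subset \tau$. Since $\tau \in \mathcal{T}_0$ contains no roots of unity (Remark~\ref{rem:cycle-facts}), the powers $\alpha^{p^n}$ are pairwise distinct, so $A(\alpha)$ is a totally ordered sequence (under the $p$-th power map) that uniquely determines $\alpha$ as its minimum element. The objective is to identify the sought $\gamma$ as the minimum of $\bigcap_{\alpha \in S} A(\alpha)$.

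The key step is to show that for any two elements $\alpha, \alpha' \in \tau$, the intersection $A(\alpha) \cap A(\alpha')$ is non-empty and of the form $A(\gamma)$ for a unique $\gamma \in \tau$. Non-emptiness is immediate from the defining relation $\sim$ on $\tau$: there exist $r, s \geq 0$ with $\alpha^{p^r} = (\alpha')^{p^s}$, and this common value lies in both $A(\alpha)$ and $A(\alpha')$. The structural claim rests on the sub-lemma that if $\alpha^{p^r} = (\alpha')^{p^s}$ and $\alpha^{p^{r'}} = (\alpha')^{p^{s'}}$ with $r' > r$, then necessarily $r' - r = s' - s$; indeed, otherwise rewriting via $(\alpha')^{p^{s'}} = (\alpha^{p^r})^{p^{r'-r}} = (\alpha')^{p^{s + (r' - r)}}$ would force $\alpha'$ to be a root of unity, contradicting $\tau \in \mathcal{T}_0$. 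Consequently, letting $r_0$ be the smallest $r \geq 0$ with $\alpha^{p^r} \in A(\alpha')$, one obtains $A(\alpha) \cap A(\alpha') = A(\alpha^{p^{r_0}})$. A straightforward finite induction on $|S|$ then yields $\bigcap_{\alpha \in S} A(\alpha) = A(\gamma)$ for a unique $\gamma \in \tau$.

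Given this $\gamma$, define $n(\alpha) \in \mathbb{Z}_{\geq 0}$ as the unique integer with $\alpha^{p^{n(\alpha)}} = \gamma$ and set $h := \max_{\alpha \in S} n(\alpha)$; then $S \subseteq \beta_h(\gamma)$ by construction. For uniqueness of the minimizing $\gamma$: if $S \subseteq \beta_{h'}(\gamma')$ for some $\gamma' \in \tau$ and $h' \in \mathbb{Z}_{\geq 0}$, then $\gamma' \in A(\alpha)$ for every $\alpha \in S$, hence $\gamma' \in A(\gamma)$, so $\gamma' = \gamma^{p^k}$ for some $k \geq 0$. The corresponding minimal height is then $h + k$, which strictly exceeds $h$ whenever $k \geq 1$; hence $\gamma' = \gamma$ is the unique root minimizing $h$. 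The main obstacle is establishing the structural shape of the pairwise intersections in the key step, since this is precisely the place where the non-torsion hypothesis $\tau \in \mathcal{T}_0$ is essential---in torsion trees, roots of unity would permit spurious relations between the offsets $r, s$ that would destroy the uniqueness of $\gamma$.
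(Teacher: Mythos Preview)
Your proof is correct and follows essentially the same approach as the paper: both identify $\gamma$ as the meet (least common ancestor) of $S$ in the directed tree structure on $\tau$ induced by $\alpha \mapsto \alpha^p$. The paper's proof merely names this meet and asserts its existence and uniqueness by appeal to the tree structure, whereas you unpack this explicitly via the ancestor sets $A(\alpha)$ and verify that their intersection has the form $A(\gamma)$ using the non-torsion hypothesis; your version is more detailed but conceptually identical.
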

\begin{proof} This is an immediate consequence of the proof of \cite[Lem.~4.4]{arreche-zhang:2022}, whose focus and notation was rather different from the one adopted here, so let us complement it here with an alternative and more conceptual argument. As explained in \cite[Remark~2.7 and Example~2.9]{arreche-zhang:2022}, we can introduce a digraph structure on $\tau$ in which we have a directed edge $\alpha\rightarrow \xi$ whenever $\alpha^p=\xi$, resulting in an infinite (directed) tree. The ``meet'' of the elements of $S$ is the unique $\gamma\in\tau$ such that $S\subseteq\beta_h(\gamma)$ with $h$ as small as possible. 
\end{proof}

\begin{defn}[cf.~\protect{\cite[Def.~4.6]{arreche-zhang:2022}}]\label{defn:non-torsion-height} For $f\in\mathbb{K}(x)$ and $\tau\in\mathrm{supp}(f)\cap\mathcal{T}_0$, the \emph{height} of $f$ at $\tau$, denoted by $\mathrm{ht}(f,\tau)$, is the smallest $h\in\mathbb{Z}_{\geq 0}$ such that $\mathrm{sing}(f,\tau)\subseteq\beta_h(\gamma)$ for the unique $\gamma\in\tau$ identified in Lemma~\ref{lem:bouquet} with $S=\mathrm{sing}(f,\tau)\subset\tau$. We write $\beta(f,\tau):=\beta_h(\gamma)$, the \emph{bouquet} of $f$ in $\tau$. For $\alpha\in\beta(f,\tau)$, the \emph{height} of $\alpha$ in $f$, denoted by $\eta(\alpha|f)$, is the unique $0\leq n \leq h$ such that $\alpha^{p^n}=\gamma$.\end{defn}

In \cite[Def.~4.10]{arreche-zhang:2022} we gave a recursive definition in the $\lambda=0$ case of Mahler discrete residues for non-torsion $\tau\in\mathcal{T}_0$. Here we provide a non-recursive definition for $\lambda\in\mathbb{Z}$ arbitrary, which can be shown to agree with the one from \cite{arreche-zhang:2022} in the special case $\lambda=0$.

\begin{defn}\label{defn:non-torsion-residues} For $f\in\mathbb{K}(x)$, $\lambda\in\mathbb{Z}$, and $\tau\in\mathcal{T}_0$, the $\lambda$-\emph{Mahler discrete residue} of $f$ at $\tau$ of degree $k\in\mathbb{N}$ is the vector \[\mathrm{dres}_\lambda(f,\tau,k)=\Bigl(\mathrm{dres}_\lambda(f,\tau,k)_\alpha\Bigr)_{\alpha\in \tau}\in\bigoplus_{\alpha\in\tau}\mathbb{K}\] defined as follows.

We set $\mathrm{dres}_\lambda(f,\tau,k)=\mathbf{0}$ if either $\tau\notin\mathrm{supp}(f)$ or $k>\mathrm{ord}(f,\tau)$ as in Definition~\ref{defn:supp}. For $\tau\in\mathrm{supp}(f)$, let us write \begin{equation}\label{eq:non-torsion-expansion}f_\tau=\sum_{k\in\mathbb{N}} \ \sum_{\alpha\in \tau}\frac{c_k(\alpha)}{(x-\alpha)^k}.\end{equation} We set $\mathrm{dres}_\lambda(f,\tau,k)_\alpha=0$ for every $k\in \mathbb{N}$ whenever $\alpha\in\tau$ is such that either $\alpha\notin \beta(f,\tau)$ or, for $\alpha\in\beta(f,\tau)$, such that $\eta(\alpha|f)\neq h$, where $h:=\mathrm{ht}(f,\tau)$ and $\beta(f,\tau)$ are as in Definition~\ref{defn:non-torsion-height}.

Finally, for the remaining $\alpha\in\beta(f,\tau)$ with $\eta(\alpha|f)=h$ and $1\leq k\leq \mathrm{ord}(f,\tau)=:m$, we define \begin{equation}\label{eq:non-torsion-residues}\mathrm{dres}_\lambda(f,\tau,k)_\alpha:=\sum_{s=k}^m  \sum_{n=0}^hp^{\lambda n}V^s_{k,n}(\alpha)c_s(\alpha^{p^n}),\end{equation}where the Mahler coefficients $V^s_{k,n}(\alpha)$ are as in Proposition~\ref{prop:mahler-coefficients}.
\end{defn}

\begin{prop} \label{prop:non-torsion-residues} For $f\in\mathbb{K}(x)$, $\lambda\in\mathbb{Z}$, and $\tau\in\mathcal{T}_0$, the component $f_\tau$ is $\lambda$-Mahler summable if and only if $\mathrm{dres}_\lambda(f,\tau,k)=\mathbf{0}$ for every $k\in\mathbb{N}$.
\end{prop}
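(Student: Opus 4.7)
My plan is to mirror the strategy of Proposition~\ref{prop:laurent-residues}: construct an explicit \emph{$\lambda$-Mahler reduction} $\bar{f}_{\lambda,\tau}\in\mathbb{K}(x)_\tau$ that (i) differs from $f_\tau$ by a $\lambda$-Mahler summable term, (ii) has partial-fraction coefficients given precisely by the $\lambda$-Mahler discrete residues defined in \eqref{eq:non-torsion-residues}, and (iii) has $\mathrm{disp}(\bar{f}_{\lambda,\tau},\tau)=0$ whenever $\bar{f}_{\lambda,\tau}\neq 0$. Combining (iii) with Theorem~\ref{thm:summable-dispersion} (whose cases~(2) and~(3) both apply to $\tau\in\mathcal{T}_0$ for every $\lambda\in\mathbb{Z}$) will force $\bar{f}_{\lambda,\tau}=0$ whenever $f_\tau$ is $\lambda$-Mahler summable, and the definition of the reduction will make the converse immediate.

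To carry out the construction, write $h:=\mathrm{ht}(f,\tau)$, let $\gamma\in\tau$ be as in Definition~\ref{defn:non-torsion-height}, group the poles of $f_\tau$ by height, and set
\[ u_{s,j}:=\sum_{\substack{\alpha'\in\beta(f,\tau)\\ \eta(\alpha'|f)=j}}\frac{c_s(\alpha')}{(x-\alpha')^s},\qquad\text{so that}\qquad f_\tau=\sum_{s=1}^m\sum_{j=0}^h u_{s,j}, \]
with $m:=\mathrm{ord}(f,\tau)$ and the $c_s(\alpha')$ as in \eqref{eq:non-torsion-expansion}. Then I define
\[\bar{f}_{\lambda,\tau}:=\sum_{s=1}^m\sum_{j=0}^h p^{\lambda(h-j)}\sigma^{h-j}(u_{s,j}),\]
and observe that $\bar{f}_{\lambda,\tau}-f_\tau=\sum_{s,j}\Delta_\lambda^{(h-j)}(u_{s,j})$ is $\lambda$-Mahler summable by Lemma~\ref{lem:mahler-coefficients-computations}.

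The main bookkeeping step is to verify (ii). For each $\alpha'\in\beta(f,\tau)$ at height $h-n$, the preimages of $\alpha'$ under $\xi\mapsto \xi^{p^n}$ inside $\beta(f,\tau)$ are exactly the $p^n$ elements at the top of the form $\zeta_{p^n}^i\alpha$ for any fixed $\alpha$ with $\alpha^{p^n}=\alpha'$. Using the partial-fraction identity
\[\sigma^n\!\left(\tfrac{1}{(x-\alpha')^s}\right)=\sum_{k=1}^s\sum_{i=0}^{p^n-1}\frac{V^s_{k,n}(\zeta_{p^n}^i\alpha)}{(x-\zeta_{p^n}^i\alpha)^k}\]
from \eqref{eq:mahler-coefficients} and re-indexing the summations so that the outer variable is $\alpha$ at the top rather than $\alpha'=\alpha^{p^n}$ somewhere below, the coefficient of $(x-\alpha)^{-k}$ in $\bar{f}_{\lambda,\tau}$ collapses to $\sum_{s\geq k}\sum_{n=0}^h p^{\lambda n}V^s_{k,n}(\alpha)c_s(\alpha^{p^n})=\mathrm{dres}_\lambda(f,\tau,k)_\alpha$. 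In particular $\bar{f}_{\lambda,\tau}=0$ if and only if every $\mathrm{dres}_\lambda(f,\tau,k)=\mathbf{0}$.

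Finally, for (iii): since the only possible poles of $\bar{f}_{\lambda,\tau}$ in $\tau$ lie in the finite set $T:=\{\alpha\in\tau\mid\alpha^{p^h}=\gamma\}$, and for any $\alpha\in T$ and $d\geq 1$ the iterate $\alpha^{p^d}$ either lies strictly below the top of the bouquet (if $d\leq h$) or equals $\gamma^{p^{d-h}}\neq \gamma$ for $d>h$ (since $\gamma$ is not a root of unity as $\tau\in\mathcal{T}_0$), and thus $\alpha^{p^d}\notin T$, we conclude $\mathrm{disp}(\bar{f}_{\lambda,\tau},\tau)=0$ whenever $\bar{f}_{\lambda,\tau}\neq 0$. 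Theorem~\ref{thm:summable-dispersion} then precludes $\lambda$-Mahler summability of $\bar{f}_{\lambda,\tau}$ (and equivalently of $f_\tau$), completing the equivalence. The only subtle part I anticipate is the combinatorial re-indexing in step~(ii), where the factor of $p^n$ apparent over-counting has to be absorbed correctly into the expansion of $\sigma^n$ via the Mahler coefficients $V^s_{k,n}(\zeta_{p^n}^i\alpha)$.
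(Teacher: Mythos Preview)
Your proposal is correct and follows essentially the same route as the paper: the reduction $\bar{f}_{\lambda,\tau}=\sum_{j}p^{\lambda(h-j)}\sigma^{h-j}(f_\tau^{(j)})$ is exactly what the paper builds via $f_\tau+\sum_n\Delta_\lambda^{(n)}(f_\tau^{(h-n)})$, and both arguments finish with the dispersion-zero appeal to Theorem~\ref{thm:summable-dispersion}. Your anticipated ``over-counting'' subtlety in step~(ii) is a non-issue---the $p^n$ preimages of each $\alpha'$ at height $h-n$ biject with the top-level $\alpha$'s, so the re-indexing is a clean change of summation variable with no extra factor to absorb; and in step~(iii), for $d>h$ the precise condition you need is $\gamma^{p^{d}}\neq\gamma$ (so that $\gamma^{p^{d-h}}\notin T$), not merely $\gamma^{p^{d-h}}\neq\gamma$, but both follow equally from $\tau\in\mathcal{T}_0$.
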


\begin{proof} The statement is trivial for $\tau\notin\mathrm{supp}(f)\Leftrightarrow f_\tau=0$. So let us suppose $\tau\in\mathrm{supp}(f)$, and let $h:=\mathrm{ht}(f,\tau)$, $m:=\mathrm{ord}(f,\tau)$, and $\eta(\alpha):=\eta(\alpha|f)$ for each $\alpha\in\beta(f,\tau)$. 
Writing $f_\tau$ as in \eqref{eq:non-torsion-expansion}, let us also write, for $0\leq n\leq h$,
\[ f_\tau^{(n)}:=\sum_{k=1}^m\sum_{\substack{\alpha\in\beta(f,\tau)\\\eta(\alpha)=n}}\frac{c_k(\alpha)}{(x-\alpha)^k}\qquad\text{so that}\qquad f_\tau=\sum_{n=0}^h f_\tau^{(n)}.\] 

By Lemma~\ref{lem:mahler-coefficients-computations}, for each $0\leq n\leq h$ 
we have
\[\sigma^{n}\left(f_\tau^{(h-n)}\right)
=\sum_{k=1}^m\sum_{\substack{\alpha\in\beta(f,\tau)\tau \\ \eta(\alpha)=h}}\frac{\sum_{s=k}^mV^s_{k,n}(\alpha)c_s(\alpha^{p^n})}{(x-\alpha)^k},\] and therefore \[\Delta_\lambda^{(n)}\left(f_\tau^{(h-n)}\right)
=- f_\tau^{(h-n)}+\sum_{k=1}^m\sum_{\substack{\alpha\in\beta(f,\tau) \\ \eta(\alpha)=h}}\frac{p^{\lambda n}\sum_{s=k}^mV^s_{k,n}(\alpha)c_s(\alpha^{p^n})}{(x-\alpha)^k}.\] It follows from the Definition~\ref{defn:non-torsion-residues} that \begin{equation}\label{eq:non-torsion-remainder}\bar{f}_\tau:=f_\tau+\sum_{n=0}^n\Delta_\lambda^{(n)}\left(f_\tau^{(h-n)}\right)=\sum_{k=1}^m\sum_{\alpha\in\tau}\frac{\mathrm{dres}_\lambda(f,\tau,k)_\alpha}{(x-\alpha)^k}.\end{equation}

By Lemma~\ref{lem:mahler-coefficients-computations}, $\bar{f}_{\lambda,\tau}-f_\tau$ is $\lambda$-Mahler summable, and therefore $f_\tau$ is $\lambda$-Mahler summable if and only if $\bar{f}_{\lambda,\tau}$ is $\lambda$-Mahler summable. If $\mathrm{dres}_\lambda(f,\tau,k)=\mathbf{0}$ for every $1\leq k\leq m$, then $\bar{f}_{\lambda,\tau}=0$ and therefore $f_\tau$ is $\lambda$-Mahler summable. On the other hand, if some $\mathrm{dres}_\lambda(f,\tau,k)\neq \mathbf{0}$, then $0\neq \bar{f}_{\lambda,\tau}$ has $\mathrm{disp}(\bar{f}_{\lambda,\tau},\tau)=0$ (see Definition~\ref{defn:dispersion}), whence by Theorem~\ref{thm:summable-dispersion} $\bar{f}_{\lambda,\tau}$ could not possibly be $\lambda$-Mahler summable, and therefore neither could $f_\tau$. This concludes the proof that $f_\tau$ is $\lambda$-Mahler summable if and only if $\mathrm{dres}_\lambda(f,\tau,k)=\mathbf{0}$ for every $l\in\mathbb{N}$. \end{proof}

\begin{rem}\label{rem:non-torsion-remainder} For $f\in\mathbb{K}(x)$ and $\tau\in\mathrm{supp}(f)\cap\mathcal{T}_0$, the element $\bar{f}_{\lambda,\tau}$ in \eqref{eq:non-torsion-remainder} is the $\tau$-component of the $\bar{f}_\lambda\in\mathbb{K}(x)$ in the $\lambda$-\emph{Mahler reduction} \eqref{eq:mahler-reduction}. 
\end{rem}


\subsection{Twisted Mahler discrete residues at Mahler trees: the torsion case} \label{sec:torsion-residues}

We now define the $\lambda$-Mahler discrete residues of $f\in\mathbb{K}(x)$ at torsion trees $\tau\in\mathcal{T}_+$ (see Definition~\ref{defn:cycles}) in terms of the partial fraction decomposition of the component $f_\tau\in\mathbb{K}(x)_\tau$ in Definition~\ref{defn:tree-projection}, and show that it forms a complete obstruction to the $\lambda$-Mahler summability of $f_\tau$. The definitions and proofs in this case are more technical than in the non-torsion case, involving the cycle map $\mathcal{D}_{\lambda,\tau}$ of Definition~\ref{defn:d-map} and its $\omega$-section $\mathcal{I}^{(\omega)}_{\lambda,\tau}$ from Definition~\ref{defn:omega-section}, for a particular choice of constant $\omega\in\mathbb{K}$ associated to $f$, which we construct in Definition~\ref{defn:omega-average}.

We begin by recalling the following definition from \cite{arreche-zhang:2022}, which is the torsion analogue of Definition~\ref{defn:non-torsion-height}.

\begin{defn}[cf.~\protect{\cite[Def.~4.6]{arreche-zhang:2022}}] \label{defn:torsion-height} For $\tau\in\mathcal{T}_+$ and $\alpha\in\tau$, the \emph{height} of $\alpha$, denoted by $\eta(\alpha)$, is the smallest $n\in\mathbb{Z}_{\geq 0}$ such that $\alpha^{p^n}\in\mathcal{C}(\tau)$ (cf.~Definition~\ref{defn:cycles}). For $f\in\mathbb{K}(x)$ and $\tau\in\mathrm{supp}(f)\cap\mathcal{T}_+$, the \emph{height} of $f$ at $\tau$ is \[\mathrm{ht}(f,\tau):=\mathrm{max}\{\eta(\alpha) \ | \ \alpha\in\mathrm{sing}(f,\tau)\},\] or equivalently, the smallest $h\in\mathbb{Z}_{\geq 0}$ such that $\alpha^{p^h}\in\mathcal{C}(\tau)$ for every pole $\alpha$ of $f$ in $\tau$.
\end{defn}

The following definition will allow us to use the correct $\omega$-section $\mathcal{I}^{(\omega)}_{\lambda,\tau}$ from Definition~\ref{defn:omega-section} in our construction of $\lambda$-Mahler discrete residues in the torsion case.

\begin{defn} \label{defn:omega-average} For $f\in\mathbb{K}(x)$ and $\tau\in\mathrm{supp}(f)\cap\mathcal{T}_+$, let us write \[f_\tau=\sum_{k\in\mathbb{N}}\sum_{\alpha\in\tau}\frac{c_k(\alpha)}{(x-\alpha)^k}.\] For $\lambda\in\mathbb{Z}$, we define the \emph{residual average} $\omega_{\lambda,\tau}(f)\in\mathbb{K}$ of $f$ (relative to $\lambda$ and $\tau$) as follows.

If $\lambda\leq 0$ or if $h:=\mathrm{ht}(f,\tau)=0$ (cf.~Definition~\ref{defn:torsion-height}), we simply set $\omega_{\lambda,\tau}(f)=0$. In case both $\lambda,h\geq 1$, let $\tau_h:=\{\alpha\in\tau \ | \ \eta(\alpha)=h\}$ be the set of elements of $\tau$ of height $h$. Let us write $\mathbf{c}=(c_k(\gamma))$, for $\gamma$ ranging over $\mathcal{C}(\tau)$ only, and let $(d_k^{(0)}(\gamma))=\mathbf{d}^{(0)}:=\mathcal{I}^{(0)}_{\lambda,\tau}(\mathbf{c})$ as in Definition~\ref{defn:omega-section} and $(\tilde{c}_k(\gamma))=\tilde{\mathbf{c}}=\mathcal{D}_{\lambda,\tau}(\mathbf{d}^{(0)})$, as in Definition~\ref{defn:omega-section}. Then we define
\begin{multline}\label{eq:omega-average}
\omega_{\lambda,\tau}(f):=\frac{1}{(p^h-p^{h-1})e}\sum_{\alpha\in\tau_h}\sum_{s\geq\lambda}\sum_{n=0}^{h-1}p^{\lambda n}\mathbb{V}^s_{\lambda,n}\alpha^{-sp^n}c_s(\alpha^{p^n})\\
-\frac{p^{\lambda(h-1)}}{e}\sum_{\gamma\in\mathcal{C}(\tau)}\sum_{s\geq \lambda}\mathbb{V}^s_{\lambda,h-1}\gamma^{-s}(\tilde{c}_s(\gamma)+d_s^{(0)}(\gamma)),\end{multline}

where the universal Mahler coefficients $\mathbb{V}^s_{\lambda,n}\in\mathbb{Q}$ are defined as in Section~\ref{sec:mahler-coefficients}. 
\end{defn}

The significance of this definition of the residual average $\omega_{\lambda,\tau}(f)$ and our choice of nomenclature is explained in the proof of Proposition~\ref{prop:torsion-residues} below (with the aid of Lemma~\ref{lem:uniform-height}). We are now ready to define the $\lambda$-Mahler discrete residues at torsion Mahler trees. In \cite[Def.~4.16]{arreche-zhang:2022} we gave a \emph{recursive} definition of Mahler discrete residues for torsion $\tau\in\mathcal{T}_+$ in the $\lambda=0$ case. Here we provide a less recursive definition for $\lambda\in\mathbb{Z}$ arbitrary, which can be shown to agree with the one from \cite{arreche-zhang:2022} in the special case $\lambda=0$. This new definition is only \emph{less} recursive than that of \cite{arreche-zhang:2022} because of the intervention of the map $\mathcal{I}_{\lambda,\tau}^{(\omega)}$, for which we have not found a closed form and whose definition is still essentially recursive.

\begin{defn}\label{defn:torsion-residues}For $f\in\mathbb{K}(x)$, $\lambda\in\mathbb{Z}$, and $\tau\in\mathcal{T}$ with $\tau\subset\mathbb{K}^\times_t$, the $\lambda$-\emph{Mahler discrete residue} of $f$ at $\tau$ of degree $k\in\mathbb{N}$ is the vector \[\mathrm{dres}_\lambda(f,\tau,k)=\Bigl(\mathrm{dres}_\lambda(f,\tau,k)_\alpha\Bigr)_{\alpha\in\tau}\in\bigoplus_{\alpha\in\tau}\mathbb{K}\] defined as follows.

We set $\mathrm{dres}_\lambda(f,\tau,k)=\mathbf{0}$ if either $\tau\notin\mathrm{supp}(f)$ or $k>\mathrm{ord}(f,\tau)$ as in Definition~\ref{defn:supp}. For $\tau\in\mathrm{supp}(f)$, let us write \begin{equation}\label{eq:torsion-expansion} f_\tau=\sum_{k\in\mathbb{N}}\sum_{\alpha\in\tau}\frac{c_k(\alpha)}{(x-\alpha)^k}.\end{equation} We set $\mathrm{dres}_\lambda(f,\tau,k)_\alpha=0$ for every $k\in\mathbb{N}-\{\lambda\}$ whenever $\alpha\in\tau$ is such that $\eta(\alpha)\neq h$, where $h:=\mathrm{ht}(f,\tau)$ and $\eta(\alpha)$ are as in Definition~\ref{defn:torsion-height}. In case $\lambda\geq 1$, we set $\mathrm{dres}_\lambda(f,\tau,\lambda)_\alpha=0$ also whenever $\eta(\alpha)\notin\{0,h\}$.

In case $h=0$, so that $\mathrm{sing}(f,\tau)\subseteq\mathcal{C}(\tau)$, we simply set \[\mathrm{dres}_\lambda(f,\tau,k)_\gamma:=c_k(\gamma)\] for every $1\leq k\leq\mathrm{ord}(f,\tau)$ and $\gamma\in\mathcal{C}(\tau)$. In case $h\geq 1$, let us write $\mathbf{c}=(c_k(\gamma))$ for $\gamma$ ranging over $\mathcal{C}(\tau)$ only, and let $(d_k(\gamma))=\mathbf{d}:=\mathcal{I}_{\lambda,\tau}^{(\omega)}(\mathbf{c})$ as in Definition~\ref{defn:omega-section}, where $\omega:=\omega_{\lambda,\tau}(f)$ (cf.~Definition~\ref{defn:omega-average}), and $(\tilde{c}_k(\gamma))=\tilde{\mathbf{c}}:=\mathcal{D}_{\lambda,\tau}(\mathbf{d})$ as in Definition~\ref{defn:d-map}. For $\alpha\in\tau$ such that $\eta(\alpha)=h$ and for $1\leq k\leq\mathrm{ord}(f,\tau)=:m$, we define \begin{multline}\label{eq:torsion-residues}\mathrm{dres}_\lambda(f,\tau,k)_\alpha:=\sum_{s=k}^m\sum_{n=0}^{h-1}p^{\lambda n}V^s_{k,n}(\alpha)c_s(\alpha^{p^n}) \\ -p^{\lambda(h-1)}\sum_{s=k}^m\mathbb{V}^s_{k,h-1}\alpha^{k-sp^{h+e-1}}\left(\tilde{c}_s\left(\alpha^{p^{h+e-1}}\right)+d_s\left(\alpha^{p^{h+e-1}}\right)
\right).\end{multline} In case $\lambda\geq 1$, for $\gamma\in\mathcal{C}(\tau)$ we set \begin{equation}\label{eq:torsion-residues-cycle}\mathrm{dres}_\lambda(f,\tau,\lambda)_\gamma:=c_\lambda(\gamma)-\tilde{c}_\lambda(\gamma)=\frac{\gamma^\lambda}{e}\sum_{j=1}^e \gamma^{-\lambda p^j}\left(c_\lambda(\gamma^{p^j})-p^\lambda\sum_{s\geq \lambda+1} V^s_{\lambda,1}(\gamma^{p^j})d_s(\gamma^{p^{j+1}})\right).\end{equation}
\end{defn}

\begin{rem}\label{rem:torsion-residues-computation} The Definition~\ref{defn:torsion-residues} can be expressed equivalently in ways that are easier to compute, but which require a lot of hedging. We cannot improve on the definition in case $h=0$; so let us address the case $h\geq 1$. The different ingredients used in Definition~\ref{defn:torsion-residues} are best computed in the following order. In every case, one should first compute the vector $\mathbf{d}^{(0)}:=\mathcal{I}^{(0)}_{\lambda,\tau}(\mathbf{c})$ of Proposition~\ref{prop:omega-section}. Every instance of $\tilde{c}_s$ in \eqref{eq:omega-average} and in \eqref{eq:torsion-residues} can (and should) be replaced with $c_s$, with the single exception of $\tilde{c}_\lambda$ (if it happens to occur), which should be rewritten in terms of the $c_s$ and $d_s^{(0)}$ using \eqref{eq:omega-section-lambda}. There is no need to find $\tilde{\mathbf{c}}$ by applying $\mathcal{D}_{\lambda,\tau}$ to anything. Having made these replacements, and only then, one should then compute the residual average $\omega$ from Definition~\ref{defn:omega-average}. If this $\omega$ happens to be $0$ then we already have all the required ingredients to compute our discrete residues. Only in case $\omega\neq 0$, we then proceed to compute the vector $\mathbf{w}^{(\lambda)}$ of Lemma~\ref{lem:d-map-kernel}, and by Definition~\ref{defn:omega-section} we can replace the $d_s$ in \eqref{eq:torsion-residues} with $d_s^{(0)}+\omega\cdot w^{(\lambda)}_s$, all of which have already been computed, and now we are once again in possession of all the required ingredients. \end{rem}

We next present several preparatory Lemmas that will aid us in streamlining our proof of Proposition~\ref{prop:torsion-residues} below that the $\lambda$-Mahler discrete residues just defined comprise a complete obstruction to the $\lambda$-Mahler summability of $f_\tau$ for $\tau\in\mathcal{T}_+$. We hope that the reader who, like us, finds the above Definition~\ref{defn:torsion-residues} painfully complicated, especially in comparison with the relatively simpler Definition~\ref{defn:non-torsion-residues} in the non-torsion case, can begin to glimpse in the statements of the following preliminary results the reasons for the emergence of the additional ingredients in Definition~\ref{defn:torsion-residues} that are absent from Definition~\ref{defn:non-torsion-residues}. This is why we have chosen to present them first, and postpone their proofs until after their usefulness has become apparent in the proof of Proposition~\ref{prop:torsion-residues}.

\begin{lem}\label{lem:height-0} Let $f\in\mathbb{K}(x)$, $\lambda\in\mathbb{Z}$, and $\tau\in\mathrm{supp}(f,\tau)\cap\mathcal{T}_+$. If $\mathrm{ht}(f,\tau)=0$ then $f_\tau$ is not $\lambda$-Mahler summable.
\end{lem}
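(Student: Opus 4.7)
The plan is to derive a contradiction: if $f_\tau = \Delta_\lambda(g_\tau)$ for some $g_\tau$, which by Lemma~\ref{lem:tree-decomposition} we may assume lies in $\mathbb{K}(x)_\tau$, then I will produce a pole of $f_\tau$ of strictly positive height in $\tau$, contradicting the hypothesis $\mathrm{ht}(f,\tau)=0$ (which means $\mathrm{sing}(f,\tau)\subseteq \mathcal{C}(\tau)$). Note that $g_\tau\neq 0$ since $f_\tau\neq 0$.

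First I would set $h:=\mathrm{ht}(g,\tau)$ and pick a pole $\alpha\in\tau$ of $g_\tau$ with $\eta(\alpha)=h$, and take $k$ to be the order of $g_\tau$ at $\alpha$, so that in the expansion $g_\tau=\sum_{j,\alpha'}d_j(\alpha')(x-\alpha')^{-j}$ one has $d_k(\alpha)\neq 0$ and $d_s(\alpha)=0$ for $s>k$. I would then produce a preimage $\beta\in\tau$ with $\beta^p=\alpha$ and $\eta(\beta)=h+1$, as follows. If $h\geq 1$, then $\alpha\notin\mathcal{C}(\tau)$ and any choice of $\beta$ with $\beta^p=\alpha$ automatically has height $h+1$ (an easy verification from Definition~\ref{defn:torsion-height}). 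If $h=0$, then $\alpha\in\mathcal{C}(\tau)$, and I would take $\beta:=\zeta_p\alpha^{p^{e-1}}$ (with $e:=|\mathcal{C}(\tau)|$), so $\beta^p=\zeta_p^p\,\alpha^{p^e}=\alpha$, and since $\beta$ has order divisible by $p$ it lies outside $\mathcal{C}(\tau)$ while $\beta^p=\alpha\in\mathcal{C}(\tau)$ forces $\eta(\beta)=1$. In either case $\beta\notin\mathrm{sing}(g,\tau)$ since $\eta(\beta)=h+1>h=\mathrm{ht}(g,\tau)$.

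Next I would compute the order of $\sigma(g_\tau)$ at $\beta$. The pole of $\sigma(g_\tau)$ at $\beta$ can only receive contributions from poles $\alpha'$ of $g_\tau$ with $(\alpha')=\beta^p=\alpha$, i.e., only from the term at $\alpha$. By Lemma~\ref{lem:mahler-coefficients-computations}, the coefficient of $(x-\beta)^{-k}$ in $\sigma(g_\tau)$ is $\sum_{s\geq k}V^s_{k,1}(\beta)\,d_s(\alpha)$, which by our choice of $k$ reduces to $V^k_{k,1}(\beta)\,d_k(\alpha)=p^{-1}\beta^{k-pk}d_k(\alpha)\neq 0$ using Corollary~\ref{cor:mahler-coefficients}. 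Hence $\sigma(g_\tau)$, and therefore $p^\lambda\sigma(g_\tau)$, has a pole of order exactly $k$ at $\beta$; since $g_\tau$ has no pole at $\beta$, these do not cancel, and $f_\tau=\Delta_\lambda(g_\tau)$ has a pole at $\beta$, of height $h+1\geq 1$. This contradicts $\mathrm{ht}(f,\tau)=0$, completing the proof.

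The main (and only nontrivial) obstacle is ensuring that the leading coefficient at $\beta$ in $\sigma(g_\tau)$ really is nonzero and that no cancellation occurs from other poles of $g_\tau$; this is handled by choosing $\alpha$ and $k$ so that $d_s(\alpha)=0$ for $s>k$, by observing that the fiber of $x\mapsto x^p$ over $\alpha$ in $\tau$ contains only $\alpha$ as an image, and by invoking Corollary~\ref{cor:mahler-coefficients} for the nonvanishing of $V^k_{k,1}(\beta)$. The case split $h=0$ vs.\ $h\geq 1$ is only needed to exhibit a suitable $\beta$; the remainder of the argument is uniform.
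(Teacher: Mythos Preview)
Your proof is correct and follows essentially the same approach as the paper: both argue that for any nonzero $g_\tau\in\mathbb{K}(x)_\tau$ one has $\mathrm{ht}(\Delta_\lambda(g),\tau)\geq 1$, by exhibiting a pole of $\sigma(g_\tau)$ at some $\beta$ of height $h+1$ where $g_\tau$ is regular. The only differences are cosmetic---the paper works with the global maximal order $m=\mathrm{ord}(g,\tau)$ rather than your local order $k=\mathrm{ord}_\alpha(g_\tau)$, and leaves the construction of $\beta$ implicit---and there is a harmless typo in your display (Corollary~\ref{cor:mahler-coefficients} gives $V^k_{k,1}(\beta)=p^{-k}\beta^{k-pk}$, not $p^{-1}\beta^{k-pk}$).
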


\begin{lem}\label{lem:g0g1} Let $\lambda\in\mathbb{Z}$ and $\tau\in\mathcal{T}_+$, and set $e:=|\mathcal{C}(\tau)|$ as in~Definition~\ref{defn:cycles}. Let $f\in\mathbb{K}(x)$, and let us write the cyclic component \[\mathcal{C}(f_\tau)=\sum_{k\in\mathbb{N}}\sum_{\gamma\in\mathcal{C}(\tau)}\frac{c_k(\gamma)}{(x-\gamma)^k},\] as in Definition~\ref{defn:cyclic-component}, and let us write $\mathbf{c}=(c_k(\gamma))\in\mathcal{S}^{\mathcal{C}(\tau)}$. Let $\omega\in\mathbb{K}$ be arbitrary, and let us write $\mathbf{d}=(d_k(\gamma))=\mathcal{I}_{\lambda,\tau}^{(\omega)}(\mathbf{c})$ as in Definition~\ref{defn:omega-section} and $\tilde{\mathbf{c}}=\mathcal{D}_{\lambda,\tau}(\mathbf{d})$ as in Definition~\ref{defn:d-map}. Set \begin{equation}\label{eq:g0g1-defn}g_0:=\sum_{k\in\mathbb{N}}\sum_{\gamma\in\mathcal{C}(\tau)}\frac{d_k(\gamma)}{(x-\gamma)^k}\qquad\text{and}\qquad g_1:=-\sum_{k\in\mathbb{N}}\sum_{\gamma\in\mathcal{C}(\tau)}\sum_{i=1}^{p-1}\frac{\zeta_p^{ki}(\tilde{c}_k(\gamma)+d_k(\gamma))}{(x-\zeta_p^i\gamma)^k}.\end{equation}

Then \begin{equation}\label{eq:g0g1-cases}\mathcal{C}(f_\tau)-\Delta_\lambda(g_0)=\begin{cases}
    g_1 & \text{if} \ \lambda\leq 0;\\
    \displaystyle g_1+\sum_{\gamma\in\mathcal{C}(\tau)}\frac{c_\lambda(\gamma)-\tilde{c}(\gamma)}{(x-\gamma)^\lambda}& \text{if} \ \lambda\geq 1.\end{cases}
\end{equation} Moreover, for any $h\geq 1$, writing $\tau_h:=\{\alpha\in\tau \ | \ \eta(\alpha)=h\}$, we have 

\begin{equation}\label{eq:g1-h}\sigma^{h-1}(g_1)=-\sum_{k\in\mathbb{N}}\sum_{\alpha\in\tau_h}\frac{\sum_{s\geq k}\mathbb{V}^s_{k,h-1}\alpha^{k-sp^{h+e-1}}\left(\tilde{c}_s\left(\alpha^{p^{h+e-1}}\right)+d_s\left(\alpha^{p^{h+e-1}}\right)\right)}{(x-\alpha)^k}.\end{equation}
\end{lem}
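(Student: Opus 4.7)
To prove \eqref{eq:g0g1-cases}, the plan is to compute $\Delta_\lambda(g_0)=p^\lambda\sigma(g_0)-g_0$ directly by applying Lemma~\ref{lem:mahler-coefficients-computations} to each term of $g_0$. Since $\alpha\mapsto\alpha^p$ is a bijection of $\mathcal{C}(\tau)$, each $\gamma\in\mathcal{C}(\tau)$ has a unique $p$-th root $\alpha\in\mathcal{C}(\tau)$ (namely $\alpha=\gamma^{p^{e-1}}$), so the lemma gives after re-indexing
\[ \sigma(g_0)=\sum_k\sum_{\gamma\in\mathcal{C}(\tau)}\sum_{i=0}^{p-1}\frac{\sum_{s\geq k}V^s_{k,1}(\zeta_p^i\gamma)\,d_s(\gamma^p)}{(x-\zeta_p^i\gamma)^k}. \]
The $i=0$ contributions reassemble, by Lemma~\ref{lem:cyclic-component-short}, into $\mathcal{C}(\Delta_\lambda(g_0))=\sum_k\sum_\gamma\tilde{c}_k(\gamma)/(x-\gamma)^k$. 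For the $i\neq 0$ contributions I would invoke Lemma~\ref{lem:mahler-coefficients} to observe that $V^s_{k,1}(\zeta_p^i\gamma)=\zeta_p^{ki}V^s_{k,1}(\gamma)$ (using $\zeta_p^{-isp}=1$), after which the defining identity $\tilde{c}_k(\gamma)+d_k(\gamma)=p^\lambda\sum_sV^s_{k,1}(\gamma)d_s(\gamma^p)$ from Definition~\ref{defn:d-map} collapses the full $i\neq 0$ contribution of $\Delta_\lambda(g_0)$ to exactly $-g_1$. Subtracting from $\mathcal{C}(f_\tau)$ then leaves the cyclic discrepancy $\sum_k\sum_\gamma(c_k(\gamma)-\tilde{c}_k(\gamma))/(x-\gamma)^k$ plus $g_1$, and Proposition~\ref{prop:omega-section} identifies this discrepancy as zero in every degree $k\neq\lambda$: the discrepancy vanishes entirely in case $\lambda\leq 0$ (since then $\lambda\notin\mathbb{N}$) and reduces to the single degree $k=\lambda$ component in case $\lambda\geq 1$, yielding the split in \eqref{eq:g0g1-cases}.

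For \eqref{eq:g1-h}, I would re-index the sum defining $g_1$ as a sum over $\beta=\zeta_p^i\gamma\in\tau_1$ and apply $\sigma^{h-1}$ term by term via Lemma~\ref{lem:mahler-coefficients-computations}. For each $\beta\in\tau_1$, its $p^{h-1}$ distinct $p^{h-1}$-th roots all lie in $\tau_h$, and as $\beta$ ranges over $\tau_1$ these enumerate $\tau_h$ bijectively. Substituting $V^s_{k,h-1}(\alpha)=\mathbb{V}^s_{k,h-1}\alpha^{k-sp^{h-1}}$ from Lemma~\ref{lem:mahler-coefficients}, the only substantive simplification is the collapse $\zeta_p^{si}\alpha^{k-sp^{h-1}}=\alpha^{k-sp^{h+e-1}}$ together with the rewrite $\tilde{c}_s(\gamma)+d_s(\gamma)=\tilde{c}_s(\alpha^{p^{h+e-1}})+d_s(\alpha^{p^{h+e-1}})$, both of which follow from the identity $\alpha^{p^{h+e-1}}=\beta^{p^e}=\gamma$. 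The latter in turn uses $\beta^p=(\zeta_p^i\gamma)^p=\gamma^p$ together with the cycle fact $\gamma^{p^e}=\gamma$ from Remark~\ref{rem:cycle-facts}.

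The main obstacle is combinatorial bookkeeping rather than any conceptual difficulty: one must keep straight the dictionary among the parameterizations $(\gamma,i)\in\mathcal{C}(\tau)\times\{1,\ldots,p-1\}$, $\beta\in\tau_1$, and the $p^{h-1}$-th roots $\alpha\in\tau_h$ of each $\beta$, and track how the combinations $\alpha^{p^{h+e-1}}\in\mathcal{C}(\tau)$ move through the sums. Once the correspondences are in place, everything reduces mechanically to the equivariance $V^s_{k,n}(\zeta\alpha)=\zeta^k V^s_{k,n}(\alpha)$ for $p^n$-th roots of unity $\zeta$ (a direct consequence of Lemma~\ref{lem:mahler-coefficients}) and to the periodicity $\gamma^{p^e}=\gamma$ on the cycle.
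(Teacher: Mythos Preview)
Your proposal is correct and follows essentially the same approach as the paper's proof: both compute $\Delta_\lambda(g_0)$ by expanding $\sigma(g_0)$ via Lemma~\ref{lem:mahler-coefficients-computations}, separate the $i=0$ contribution (which gives the $\tilde{c}_k$ via Lemma~\ref{lem:cyclic-component-short}) from the $i\neq 0$ contribution, use the equivariance $V^s_{k,1}(\zeta_p^i\gamma)=\zeta_p^{ki}V^s_{k,1}(\gamma)$ from Lemma~\ref{lem:mahler-coefficients} together with the defining relation of $\mathcal{D}_{\lambda,\tau}$ to identify the latter with $-g_1$, and for \eqref{eq:g1-h} both partition $\tau_h$ by the $(p^{h-1})$-th roots of elements $\zeta_p^i\gamma\in\tau_1$ and exploit $\alpha^{p^{h+e-1}}=\gamma$ to rewrite the coefficients. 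Your explicit invocation of Proposition~\ref{prop:omega-section} to kill the degrees $k\neq\lambda$ in the cyclic discrepancy is a step the paper leaves implicit but is indeed the intended mechanism.
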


\begin{lem}\label{lem:uniform-height} Let $\lambda\geq 1$, $h\geq 1$, $\bar{f}_\tau\in\mathbb{K}(x)_\tau$, and $\tau\in\mathrm{supp}(\bar{f})\cap\mathcal{T}_+$ such that $\mathrm{ord}(\bar{f},\tau)=\lambda$ and $\mathrm{sing}(f,\tau)\subseteq\tau_h=\{\alpha\in\tau \ | \ \eta(\alpha)= h\}$, so that we can write \[\bar{f}_\tau=\sum_{k=1}^\lambda\sum_{\alpha\in\tau_h}\frac{\bar{c}_k(\alpha)}{(x-\alpha)^k}.\] If $\bar{f}_\tau$ is $\lambda$-Mahler summable then all the elements $\alpha^{-\lambda}\bar{c}_\lambda(\alpha)$ are equal to the constant \[\bar{\omega}=\frac{1}{|\tau_h|}\sum_{\alpha\in\tau_h}\alpha^{-\lambda}\bar{c}_\lambda(\alpha),\] which is their arithmetic average. Letting $e:=|\mathcal{C}(\tau)|$, we have $|\tau_h|=(p^h-p^{h-1})e$.
\end{lem}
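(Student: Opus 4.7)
The count $|\tau_h|=(p^h-p^{h-1})e$ will follow from the $p$-th power tree structure of $\tau$: from each $\gamma\in\mathcal{C}(\tau)$ exactly one $p$-th preimage lies again in $\mathcal{C}(\tau)$ (the unique one of order coprime to $p$) while the remaining $p-1$ lie in $\tau_1$; and from each element $\alpha$ of height $m\geq 1$, all $p$ of its $p$-th preimages land in $\tau_{m+1}$ (since $\alpha^{p^m}\in\mathcal{C}(\tau)$ together with $\alpha^{p^{m-1}}\notin\mathcal{C}(\tau)$ force every preimage $\beta$ to have $\eta(\beta)=m+1$ exactly). A straightforward induction on $h$ then yields $|\tau_h|=e(p-1)p^{h-1}$ for $h\geq 1$.

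For the main claim, I will fix a certificate $g_\tau\in\mathbb{K}(x)_\tau$ with $\bar{f}_\tau=\Delta_\lambda(g_\tau)$. By Lemma~\ref{lem:stable-support}(7), $\mathrm{ord}(g,\tau)=\lambda$, so we may write $g_\tau=\sum_{k=1}^\lambda\sum_{\alpha\in\tau}d_k(\alpha)(x-\alpha)^{-k}$ as a finite sum. Expanding $\Delta_\lambda(g_\tau)$ via Lemma~\ref{lem:mahler-coefficients-computations} and reading off the coefficient of $(x-\alpha)^{-\lambda}$ with the aid of Corollary~\ref{cor:mahler-coefficients} (among all the $d_k$, only $d_\lambda(\alpha^p)$ contributes at this top order), the identity $\bar{f}_\tau=\Delta_\lambda(g_\tau)$ reduces to
\[\bar{c}_\lambda(\alpha)=\alpha^{\lambda-p\lambda}d_\lambda(\alpha^p)-d_\lambda(\alpha)\qquad\text{for every }\alpha\in\tau.\]
After the substitution $u(\alpha):=\alpha^{-\lambda}d_\lambda(\alpha)$ and $v(\alpha):=\alpha^{-\lambda}\bar{c}_\lambda(\alpha)$, this becomes $v(\alpha)=u(\alpha^p)-u(\alpha)$ on all of $\tau$, with $v$ supported in $\tau_h$ and $u$ of finite support.

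Next, I will propagate $u$ along the $p$-power dynamics. Because $v\equiv 0$ on $\mathcal{C}(\tau)$ and $\mathcal{C}(\tau)$ is a single orbit under $\alpha\mapsto\alpha^p$ (Remark~\ref{rem:cycle-facts}), $u$ takes a constant value $u_0$ on $\mathcal{C}(\tau)$. Iterating the recurrence along the chains $\beta\to\beta^p\to\cdots\to\beta^{p^{\eta(\beta)}}\in\mathcal{C}(\tau)$ through heights strictly less than $h$ then shows $u\equiv u_0$ on $\bigcup_{m<h}\tau_m$. For $\alpha\in\tau_h$, since $u(\alpha^p)=u_0$, we obtain $u(\alpha)=u_0-v(\alpha)$; and for $\beta\in\tau$ of height $h+n$ with $n\geq 1$, the intermediate iterates $\beta,\beta^p,\dots,\beta^{p^{n-1}}$ all have height exceeding $h$ (so $v$ vanishes on them), whence $u(\beta)=u(\beta^{p^n})=u_0-v(\beta^{p^n})$ with $\beta^{p^n}\in\tau_h$.

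The key step, and the main obstacle, is to exploit the finite support of $u$ in order to force uniformity of $v$ on $\tau_h$. For any fixed $\alpha\in\tau_h$ and each $n\geq 1$, all $p^n$ elements $\beta\in\tau$ with $\beta^{p^n}=\alpha$ have height exactly $h+n$ (since $\beta^{p^{h+n}}=\alpha^{p^h}\in\mathcal{C}(\tau)$ while $\beta^{p^{h+n-1}}=\alpha^{p^{h-1}}\notin\mathcal{C}(\tau)$), and the previous step gives $u(\beta)=u_0-v(\alpha)$ for each such $\beta$. The infinite set $\bigcup_{n\geq 0}\{\beta\in\tau:\beta^{p^n}=\alpha\}$ therefore lies in the level set $\{u=u_0-v(\alpha)\}$, which by finite support of $u$ forces $v(\alpha)=u_0$. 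Since $\alpha\in\tau_h$ was arbitrary, every $\alpha^{-\lambda}\bar{c}_\lambda(\alpha)$ equals $u_0$, which consequently coincides with their arithmetic average $\bar{\omega}$.
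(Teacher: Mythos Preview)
Your proof is correct and essentially parallel to the paper's, sharing the same reduction to the recurrence $v(\alpha)=u(\alpha^p)-u(\alpha)$ via Corollary~\ref{cor:mahler-coefficients}, the same propagation of $u\equiv u_0$ through heights $0,\ldots,h-1$, and the same counting of $|\tau_h|$. The one genuine difference is how you close the argument. The paper first establishes a height bound on the certificate: if $g_\tau$ had a pole in $\tau_n$ for some $n\geq h$, then $\Delta_\lambda(g_\tau)=\bar{f}_\tau$ would acquire a pole in $\tau_{n+1}$, contradicting $\mathrm{sing}(\bar{f},\tau)\subseteq\tau_h$; hence $d_\lambda(\alpha)=0$ on $\tau_h$, and the recurrence at height $h$ gives $v(\alpha)=u_0$ directly. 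You instead skip this height bound, extend the recurrence above height $h$ to show that every $\beta$ in the infinite subtree above a fixed $\alpha\in\tau_h$ satisfies $u(\beta)=u_0-v(\alpha)$, and then invoke finite support of $u$ to force $u_0-v(\alpha)=0$. Your route is arguably more self-contained (no separate height-bound step), while the paper's route makes the structural constraint on $g_\tau$ explicit, which is useful elsewhere; both arrive at the same conclusion by essentially the same mechanism.
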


\begin{prop}\label{prop:torsion-residues} For $f\in\mathbb{K}(x)$, $\lambda\in\mathbb{Z}$, and $\tau\in\mathcal{T}_+$, the component $f_\tau$ is $\lambda$-Mahler summable if and only if $\mathrm{dres}_\lambda(f,\tau,k)=\mathbf{0}$ for every $k\in\mathbb{N}$.
\end{prop}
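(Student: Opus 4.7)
The plan is to parallel the non-torsion case of Proposition~\ref{prop:non-torsion-residues}: construct a $\lambda$-Mahler reduction $\bar{f}_{\lambda,\tau}$ that differs from $f_\tau$ by a $\lambda$-Mahler summable element and whose partial fraction coefficients reproduce Definition~\ref{defn:torsion-residues}, then show that $\bar{f}_{\lambda,\tau}$ is $\lambda$-Mahler summable if and only if it vanishes. The case $\tau\notin\mathrm{supp}(f)$ is trivial, and the case $h:=\mathrm{ht}(f,\tau)=0$ is immediate from Lemma~\ref{lem:height-0} since the defined residues are then just the coefficients of $f_\tau=\mathcal{C}(f_\tau)$.

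For $h\geq 1$, set $\omega:=\omega_{\lambda,\tau}(f)$, let $g_0,g_1$ be as in Lemma~\ref{lem:g0g1}, and decompose $f_\tau=\mathcal{C}(f_\tau)+\sum_{n=1}^hf_\tau^{(n)}$ by the heights of Definition~\ref{defn:torsion-height}; define
\[
\bar{f}_{\lambda,\tau}:=f_\tau-\Delta_\lambda(g_0)+\Delta_\lambda^{(h-1)}\bigl(g_1+f_\tau^{(1)}\bigr)+\sum_{n=2}^{h-1}\Delta_\lambda^{(h-n)}\bigl(f_\tau^{(n)}\bigr).
\]
Using Lemma~\ref{lem:g0g1} to rewrite $\mathcal{C}(f_\tau)-\Delta_\lambda(g_0)$, the identity~\eqref{eq:g1-h} for $\sigma^{h-1}(g_1)$, and Lemma~\ref{lem:mahler-coefficients-computations} to expand each $\sigma^{h-n}(f_\tau^{(n)})$, all contributions concentrate at height $h$ and the cycle; collecting coefficients reproduces exactly the formulas~\eqref{eq:torsion-residues} at height $h$ and~\eqref{eq:torsion-residues-cycle} on the cycle. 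Since every correction making up $\bar{f}_{\lambda,\tau}-f_\tau$ is $\lambda$-Mahler summable by Lemma~\ref{lem:mahler-coefficients-computations}, $f_\tau$ is $\lambda$-Mahler summable if and only if $\bar{f}_{\lambda,\tau}$ is, and the vanishing direction is trivial.

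It remains to show that $\lambda$-summability of $\bar{f}_{\lambda,\tau}$ forces $\bar{f}_{\lambda,\tau}=0$. The symmetric form of~\eqref{eq:torsion-residues-cycle} forces the cyclic part of $\bar{f}_{\lambda,\tau}$ to have the shape $\sum_{\gamma\in\mathcal{C}(\tau)}\rho\gamma^\lambda/(x-\gamma)^\lambda$ for a single constant $\rho\in\mathbb{K}$ independent of $\gamma$. If $\rho\neq 0$, applying $\mathcal{I}^{(0)}_{\lambda,\tau}$ and then $\mathcal{D}_{\lambda,\tau}$ to this vector collapses it to zero (using $\mathcal{D}_{\lambda,\tau}(\mathbf{w}^{(\lambda)})=0$ from Lemma~\ref{lem:d-map-kernel}), so by Proposition~\ref{prop:omega-section} it does not lie in $\mathrm{im}(\mathcal{D}_{\lambda,\tau})$, and Lemma~\ref{lem:cyclic-component-short} then precludes summability. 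If $\rho=0$, all poles of $\bar{f}_{\lambda,\tau}$ lie in $\tau_h$ and hence $\mathrm{disp}(\bar{f}_{\lambda,\tau},\tau)=0$, so Theorem~\ref{thm:summable-dispersion} forces $\bar{f}_{\lambda,\tau}=0$ except in the sub-case $\lambda\geq 1$ and $\mathrm{ord}(\bar{f}_{\lambda,\tau},\tau)=\lambda$, in which case Lemma~\ref{lem:uniform-height} requires $\alpha^{-\lambda}\bar{c}_\lambda(\alpha)$ to equal the arithmetic average $\bar\omega$ for every $\alpha\in\tau_h$. The main obstacle will be verifying the key identity $\bar\omega=\omega_{\lambda,\tau}(f)-\omega$: combining the kernel property $\mathcal{D}_{\lambda,\tau}(\mathbf{w}^{(\lambda)})=0$ (which makes every $\tilde{c}_s$ invariant under $\omega$-shifts, so only the $s=\lambda$ term with $w^{(\lambda)}_\lambda(\gamma)=\gamma^\lambda$ contributes) with $\mathbb{V}^\lambda_{\lambda,h-1}=p^{-(h-1)\lambda}$ from Corollary~\ref{cor:mahler-coefficients} shows that a shift $\omega\mapsto\omega+\Delta\omega$ shifts each $\alpha^{-\lambda}\bar{c}_\lambda(\alpha)$ uniformly by $-\Delta\omega$; Definition~\ref{defn:omega-average} is engineered exactly so that the choice $\omega=\omega_{\lambda,\tau}(f)$ yields $\bar\omega=0$, forcing $\bar{c}_\lambda\equiv 0$ on $\tau_h$ and hence $\bar{f}_{\lambda,\tau}=0$.
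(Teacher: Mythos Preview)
Your proposal is correct and follows essentially the same strategy as the paper: build $\bar{f}_{\lambda,\tau}$ from $f_\tau$ by the same corrections $\Delta_\lambda(g_0)$, $\Delta_\lambda^{(h-1)}(g_1)$, and $\Delta_\lambda^{(h-n)}(f_\tau^{(n)})$, identify its coefficients with the residues of Definition~\ref{defn:torsion-residues}, and then argue that a nonzero $\bar{f}_{\lambda,\tau}$ cannot be $\lambda$-summable via Theorem~\ref{thm:summable-dispersion} and Lemma~\ref{lem:uniform-height}, with the residual average computation forcing $\bar\omega=0$.

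The one place where your route diverges slightly from the paper is in handling the cyclic residues. The paper applies Proposition~\ref{prop:omega-section} directly to the \emph{original} cyclic vector $\mathbf{c}$ of $f_\tau$: summability of $f_\tau$ gives $\mathbf{c}\in\mathrm{im}(\mathcal{D}_{\lambda,\tau})$ by Lemma~\ref{lem:cyclic-component-short}, hence $\mathbf{c}=\tilde{\mathbf{c}}$, so the cyclic residues $c_\lambda(\gamma)-\tilde{c}_\lambda(\gamma)$ vanish in one stroke. You instead first observe that \eqref{eq:torsion-residues-cycle} already has the form $\rho\gamma^\lambda$ for a single constant $\rho$ (correct, since the sum in \eqref{eq:torsion-residues-cycle} ranges over the full cycle), and then argue that this cyclic vector of $\bar f_{\lambda,\tau}$ cannot lie in $\mathrm{im}(\mathcal{D}_{\lambda,\tau})$ unless $\rho=0$. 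That conclusion is right (indeed, if $\rho\gamma^\lambda=-d_\lambda(\gamma)+\gamma^{\lambda-p\lambda}d_\lambda(\gamma^p)$ for all $\gamma$, summing $\gamma^{-\lambda}$ times this over $\mathcal{C}(\tau)$ telescopes to $e\rho=0$), but your stated justification---``applying $\mathcal{I}^{(0)}_{\lambda,\tau}$ and then $\mathcal{D}_{\lambda,\tau}$ collapses it to zero using $\mathcal{D}_{\lambda,\tau}(\mathbf{w}^{(\lambda)})=0$''---is a bit loose: the relevant fact is that formula~\eqref{eq:omega-section-c} applied to $\bar{\mathbf{c}}$ (where $d_s=0$ for $s>\lambda$) gives $\bar c_\lambda-\tilde{\bar c}_\lambda=\rho\gamma^\lambda$, so $\tilde{\bar c}_\lambda=0\neq\bar c_\lambda$. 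The invocation of $\mathbf{w}^{(\lambda)}$ is not really what drives this. Otherwise your argument and the paper's coincide.
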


\begin{proof} The statement is trivial for $\tau\notin\mathrm{supp}(f)\Leftrightarrow f_\tau=0$. If $\mathrm{ht}(f,\tau)=0$ then $0\neq f_\tau$ cannot be $\lambda$-Mahler summable by Lemma~\ref{lem:height-0}, whereas in this case we defined $\mathrm{dres}(f,\tau,k)_\gamma=c_k(\gamma)$ in Definition~\ref{defn:torsion-residues}, and we obtain our conclusion vacuously in this case.

From now on we assume $\tau\in\mathrm{supp}(f)$, and let $h:=\mathrm{ht}(f,\tau)\geq 1$, $m:=\mathrm{ord}(f,\tau)$, and $\omega:=\omega_{\lambda,\tau}(f)$. Writing $f_\tau$ as in \eqref{eq:torsion-expansion}, let $\tau_n:=\{\alpha\in\tau \ | \ \eta(\alpha)=n\}$ for $n\in\mathbb{Z}_{\geq 0}$ and let us also write \[f_\tau^{(n)}:=\sum_{k=1}^m\sum_{\alpha\in\tau_n}\frac{c_k(\alpha)}{(x-\alpha)^k}\qquad\text{so that} \qquad f_\tau=\sum_{n=0}^hf_\tau^{(n)}.\] The same computation as in the proof of Proposition~\ref{prop:non-torsion-residues} yields \begin{equation}\label{eq:easy-reduction} \tilde{f}_{\lambda,\tau}:=f_\tau+\sum_{n=0}^{h-1}  \Delta_\lambda^{(n)}(f_\tau^{(h-n)})=\sum_{k=1}^m\sum_{\alpha\in\tau_h}\frac{\sum_{s\geq k}\sum_{n=0}^{h-1}p^{\lambda n}V^s_{k,n}(\alpha)c_s(\alpha^{p^n})}{(x-\alpha)^k} +\sum_{k=1}^m\sum_{\gamma\in\mathcal{C}(\tau)}\frac{c_k(\gamma)}{(x-\gamma)^k}.
\end{equation}  Let us now write, as in Definition~\ref{defn:torsion-residues}, $\mathbf{c}=(c_k(\gamma))$ for $\gamma$ ranging over $\mathcal{C}(\tau)=\tau_0$ only, $(d_k(\gamma))=\mathbf{d}:=\mathcal{I}_{\lambda,\tau}^{(\omega)}(\mathbf{c})$, and $(\tilde{c}_k(\gamma))=\tilde{\mathbf{c}}:=\mathcal{D}_{\lambda,\tau}(\mathbf{d})$.

Writing $g_0$ and $g_1$ as in \eqref{eq:g0g1-defn}, it follows from Lemma~\ref{lem:g0g1} and Definition~\ref{defn:torsion-residues} that  

 \begin{equation}\label{eq:torsion-remainder}\bar{f}_{\lambda,\tau}:=\tilde{f}_{\lambda,\tau}-\Delta_\lambda(g_0)+\Delta_\lambda^{(h-1)}(g_1)=\sum_{k=1}^m\sum_{\alpha\in\tau}\frac{\mathrm{dres}_\lambda(f,\tau,k)_\alpha}{(x-\alpha)^k}.\end{equation}

By a twofold application of Lemma~\ref{lem:mahler-coefficients-computations}, to \eqref{eq:easy-reduction} and to \eqref{eq:torsion-remainder}, we find that  \[f_\tau \ \text{is} \ \lambda\text{-Mahler summable}\Longleftrightarrow \tilde{f}_{\lambda,\tau} \  \text{is} \  \lambda\text{-Mahler summable}\Longleftrightarrow \bar{f}_{\lambda,\tau} \  \text{is} \ \lambda\text{-Mahler summable}.\] On the other hand, we see from \eqref{eq:torsion-remainder} that $\bar{f}_{\lambda,\tau}=0$ if and only if $\mathrm{dres}_\lambda(f,\tau,k)=\mathbf{0}$ for every $k\in\mathbb{N}$. Therefore we immediately conclude that if $\mathrm{dres}_\lambda(f,\tau,k)=\mathbf{0}$ for every $k\in\mathbb{N}$ then $f_\tau$ is $\lambda$-Mahler summable. Moreover, in case $\lambda\leq 0$, if $f_\tau$ is $\lambda$-Mahler summable, so that $\bar{f}_{\lambda,\tau}$ is also $\lambda$-Mahler summable, then we must have $\bar{f}_{\lambda,\tau}=0$, for otherwise we would have $\mathrm{disp}(\bar{f}_{\lambda,\tau},\tau)=0$, contradicting Theorem~\ref{thm:summable-dispersion}(2). This concludes the proof of the Propostion in case $\lambda\leq 0$.

It remains to prove the converse in the case where $\lambda\geq 1$: assuming $f_\tau$ is $\lambda$-Mahler summable, we must have $\mathrm{dres}_\lambda(f,\tau,k)=\mathbf{0}$ for every $k\in\mathbb{N}$. By Proposition~\ref{prop:omega-section}, we must have $\mathbf{c}=\tilde{\mathbf{c}}$, and therefore $\mathrm{dres}_\lambda(f,\tau,k)_\gamma=c_\lambda(\gamma)-\tilde{c}_\lambda(\gamma)=0$ for every $\gamma\in\mathcal{C}(\tau)$, whence $\mathrm{sing}(\bar{f}_{\lambda,\tau},\tau)\subseteq\tau_h$ by the Definition~\ref{defn:torsion-residues} of $\mathrm{dres}_\lambda(f,\tau,k)$. Moreover, if we had $\bar{f}_{\lambda,\tau}\neq 0$, contrary to our contention, then we would have $\mathrm{disp}(\bar{f}_{\lambda,\tau},\tau)=0$, and by Theorem~\ref{thm:summable-dispersion}(3) this can only happen in case $\mathrm{ord}(\bar{f}_{\lambda,\tau},\tau)=\lambda$. So we already conclude that $\mathrm{dres}_\lambda(f,\tau,k)=\mathbf{0}$ for every $k>\lambda$ if $f_\tau$ is $\lambda$-Mahler summable. If we can further show that $\mathrm{dres}_\lambda(f,\tau,\lambda)=\mathbf{0}$ also, then this will force $\mathrm{ord}(\bar{f}_{\lambda,\tau},\tau)\neq \lambda$ and we will be able to conclude that actually $\mathrm{dres}_\lambda(f,\tau,k)=\mathbf{0}$ for every $k\in\mathbb{N}$, as we contend, by another application of Theorem~\ref{thm:summable-dispersion}.

Thus it remains to show that if $f_\tau$ is $\lambda$-Mahler summable then $\mathrm{dres}_\lambda(f,\tau,\lambda)=\mathbf{0}$, which task will occupy us for the rest of the proof. We already know that $\mathrm{dres}_\lambda(f,\tau,k)=\mathbf{0}$ for every $k>\lambda$ and $\mathrm{dres}_\lambda(f,\tau,\lambda)_\gamma=0$ for every $\gamma\in\mathcal{C}(\tau)$, and therefore $\bar{f}_{\lambda,\tau}$ satisfies the hypotheses of Lemma~\ref{lem:uniform-height} by \eqref{eq:torsion-remainder} and the Definition~\ref{defn:torsion-residues}. So let us write $\bar{c}_k(\alpha):=\mathrm{dres}_\lambda(f,\tau,k)_\alpha$ as in Lemma~\ref{lem:uniform-height}, so that \[\bar{f}_{\lambda,\tau}=\sum_{k=1}^\lambda\sum_{\alpha\in\tau_h}\frac{\bar{c}_k(\alpha)}{(x-\alpha)^k},\] and compute the arithmetic average $\bar{\omega}$ of the elements $\alpha^{-\lambda}\bar{c}_\lambda(\alpha)$ for $\alpha$ ranging over $\tau_h$, which must be equal to $\alpha^{-\lambda}\bar{c}_\lambda(\alpha)$ for each $\alpha\in\tau_h$ by Lemma~\ref{lem:uniform-height}. Firstly, we see that \[\frac{1}{|\tau_h|}\sum_{\alpha\in\tau_h}\alpha^{-\lambda}\left(\sum_{s\geq\lambda}\sum_{n=0}^{h-1}p^{\lambda n}V^s_{\lambda,n}(\alpha)c_s(\alpha^{p^n})\right)=\frac{1}{(p^h-p^{h-1})e}\sum_{\alpha\in\tau_h}\sum_{s\geq\lambda}\sum_{n=0}^{h-1}p^{\lambda n}\mathbb{V}^s_{\lambda,n}\alpha^{-sp^n}c_s(\alpha^{p^n}),\] since $V^s_{\lambda,n}(\alpha)=\mathbb{V}^s_{\lambda,n}\cdot\alpha^{\lambda-sp^n}$ by Lemma~\ref{lem:mahler-coefficients}. Secondly, we find that in the remaining portion of the average of $\alpha^{-\lambda}\bar{c}_\lambda(\alpha)=\alpha^{-\lambda}\mathrm{dres}_\lambda(f,\tau,\lambda)_\alpha$ for $\alpha$ ranging over $\tau_h$, \begin{multline}\label{eq:second-average}\frac{1}{|\tau_h|}\sum_{\alpha\in\tau_h}\alpha^{-\lambda}\left(-p^{\lambda(h-1)}\sum_{s\geq \lambda}\mathbb{V}^s_{\lambda,h-1}\alpha^{\lambda-sp^{h+e-1}}\left(\tilde{c}_s\left(\alpha^{p^{h+e-1}}\right)+d_s\left(\alpha^{p^{h+e-1}}\right)\right)\right) \\ =\frac{-p^{\lambda(h-1)}}{(p^h-p^{h-1})e}\sum_{\alpha\in\tau_h}\sum_{s\geq\lambda}\mathbb{V}^s_{\lambda,h-1}\left(\left(\alpha^{p^h}\right)^{p^{e-1}}\right)^{-s}\left(\tilde{c}_s\left(\left(\alpha^{p^h}\right)^{p^{e-1}}\right)+d_s\left(\left(\alpha^{p^h}\right)^{p^{e-1}}\right)\right),\end{multline} the summands depend only on $\alpha^{p^h}=\gamma\in\mathcal{C}(\tau)$. For each $\gamma\in\mathcal{C}(\tau)$, the set $\{\alpha\in\tau_h \ | \ \alpha^{p^h}=\gamma\}$ has $p^h-p^{h-1}$ elements: there are $(p-1)$ distinct $p^\text{th}$-roots of $\gamma$ that do not belong to $\mathcal{C}(\tau)$, and then there are $p^{h-1}$ distinct $(p^{h-1})^\text{th}$ roots of each of those elements. Therefore the expression in \eqref{eq:second-average} is equal to the simpler \[-\frac{p^{\lambda(h-1)}}{e}\sum_{\gamma\in\mathcal{C}(\tau)}\sum_{s\geq\lambda}\mathbb{V}^s_{\lambda,h-1}\gamma^{-s}(\tilde{c}_s(\gamma)+d_s(\gamma)),\] whence the average \begin{multline}\label{eq:actual-average}\bar{\omega}:=\frac{1}{|\tau_h|}\sum_{\alpha\in\tau_h}\alpha^{-\lambda}\bar{c}_\lambda(\alpha)=\frac{1}{(p^h-p^{h-1})e}\sum_{\alpha\in\tau_h}\sum_{n=0}^{h-1}\sum_{s\geq\lambda}p^{\lambda n}\mathbb{V}^s_{\lambda,n}\alpha^{-sp^n}c_s(\alpha^{p^n})\\-\frac{p^{\lambda(h-1)}}{e}\sum_{\gamma\in\mathcal{C}(\tau)}\sum_{s\geq\lambda}\mathbb{V}^s_{\lambda,h-1}\gamma^{-s}(\tilde{c}_s(\gamma)+d_s(\gamma))).\end{multline} Note that this is not necessarily the same as the similar expression for the residual average $\omega_{\lambda,\tau}(f)$ from Definition~\ref{defn:omega-average}, which was defined with respect to $(d_k^{(0)}(\gamma))=\mathbf{d}^{(0)}:=\mathcal{I}_{\lambda,\tau}^{(0)}(\mathbf{c})$ as
\begin{multline*}
\omega_{\lambda,\tau}(f)=\frac{1}{(p^h-p^{h-1})e}\sum_{\alpha\in\tau_h}\sum_{s\geq\lambda}\sum_{n=0}^{h-1}p^{\lambda n}\mathbb{V}^s_{\lambda,n}\alpha^{-sp^n}c_s(\alpha^{p^n})\\
-\frac{p^{\lambda(h-1)}}{e}\sum_{\gamma\in\mathcal{C}(\tau)}\sum_{s\geq \lambda}\mathbb{V}^s_{\lambda,h-1}\gamma^{-s}(\tilde{c}_s(\gamma)+d_s^{(0)}(\gamma)).
\end{multline*}
And yet, 
$d_s(\gamma)=d_s^{(0)}(\gamma)$ for every $s>\lambda$ and $\gamma\in\mathcal{C}(\tau)$ by Proposition~\ref{prop:omega-section} and \[d_\lambda(\gamma)=\omega_{\lambda,\tau}(f)\cdot \gamma^{\lambda}+d_\lambda^{(0)}(\gamma)\] for each $\gamma\in\mathcal{C}(\tau)$ by the Definition~\ref{defn:omega-section} of $\mathcal{I}^{(0)}_{\lambda,\tau}$ and of $\mathcal{I}_{\lambda,\tau}^{(\omega)}$ with $\omega=\omega_{\lambda,\tau}(f)$. By Corollary~\ref{cor:mahler-coefficients}, $\mathbb{V}^\lambda_{\lambda,h-1}=p^{-\lambda(h-1)}$, and therefore we find from \eqref{eq:actual-average} that \begin{multline*}\bar{\omega}=\frac{1}{(p^h-p^{h-1})e}\sum_{\alpha\in\tau_h}\sum_{n=0}^{h-1}\sum_{s\geq\lambda}p^{\lambda n}\mathbb{V}^s_{\lambda,n}\alpha^{-sp^n}c_s(\alpha^{p^n})\\-\frac{p^{\lambda(h-1)}}{e}\sum_{\gamma\in\mathcal{C}(\tau)}\sum_{s\geq\lambda+1}\mathbb{V}^s_{\lambda,h-1}\gamma^{-s}(\tilde{c}_s(\gamma)+d^{(0)}_s(\gamma)))-\frac{p^{\lambda(h-1)}}{e}\sum_{\gamma\in\mathcal{C}(\tau)}\mathbb{V}^\lambda_{\lambda,h-1}\gamma^{-\lambda}(\tilde{c}_\lambda(\gamma)+\omega\gamma^\lambda+d_\lambda^{(0)}(\gamma))\\ = \omega-\frac{p^{\lambda(h-1)}}{e}\sum_{\gamma\in\mathcal{C}(\tau)}p^{-\lambda(h-1)}\gamma^{-\lambda}\gamma^{\lambda}\omega=\omega-\omega=0.\end{multline*} Since we must have $\bar{c}_\lambda(\alpha)=\mathrm{dres}_\lambda(f,\tau,\lambda)_\alpha=\alpha^\lambda\bar{\omega}$ in \eqref{eq:actual-average} for each $\alpha\in\tau_h$ by Lemma~\ref{lem:uniform-height}, it follows that $\mathrm{dres}_\lambda(f,\tau,\lambda)=\mathbf{0}$, concluding the proof of Proposition~\ref{prop:torsion-residues}.\end{proof}

\begin{rem}\label{rem:torsion-remainder}For $f\in\mathbb{K}(x)$ and $\tau\in\mathrm{supp}(f)\cap\mathcal{T}_+$, the element $\bar{f}_{\lambda,\tau}$ in \eqref{eq:torsion-remainder} is the $\tau$-component of the $\bar{f}_\lambda\in\mathbb{K}(x)$ in the $\lambda$-\emph{Mahler reduction} \eqref{eq:mahler-reduction}. 
\end{rem}

We conclude this section by providing the proofs of the preliminary Lemmas that we used in the proof of Proposition~\ref{prop:torsion-residues}.

\begin{proof}[Proof of Lemma~\protect{\ref{lem:height-0}}] It suffices to show that for any $g\in\mathbb{K}(x)$ such that $g_\tau\neq 0$ we have $\mathrm{ht}(\Delta_\lambda(g),\tau)\geq 1$. So let us write $m:=\mathrm{ord}(g,\tau)$, $h:=\mathrm{ht}(g,\tau)$, $\tau_n:=\{\alpha\in\tau \ | \ \eta(\alpha)=n\}$ for $n\in\mathbb{Z}_{\geq 0}$, and \[0\neq g_\tau=\sum_{k=1}^m\sum_{n=0}^h\sum_{\alpha\in\tau_n}\frac{d_k(\alpha)}{(x-\alpha)^k}.\] Then \[\Delta_\lambda(g)=\sum_{\alpha\in\tau_{h+1}}\frac{p^\lambda V^m_{m,1}(\alpha)d_m(\alpha^p)}{(x-\alpha)^m}+(\text{lower-order or lower-height terms}),\] and since $p^\lambda V^m_{m,1}(\alpha)=p^{\lambda-m}\alpha^{m-pm}$ by Corollary~\ref{cor:mahler-coefficients} and at least one $d_m(\alpha^p)\neq 0$ for some $\alpha\in\tau_{h+1}$ by assumption, we conclude that $\Delta_\lambda(g)$ has at least one pole in $\tau_{h+1}$ and therefore $\mathrm{ht}(\Delta_\lambda(g),\tau)=h+1\geq 1$, as claimed.
\end{proof}

\begin{proof}[Proof of Lemma~\protect{\ref{lem:g0g1}}] It follows from \eqref{eq:mahler-coefficients} and Lemma~\ref{lem:cyclic-component-short} that \[\Delta_\lambda(g_0)=\sum_{k\in\mathbb{N}}\sum_{\gamma\in\mathcal{C}(\tau)}\frac{\tilde{c}_k(\gamma)}{(x-\gamma)^k}+\sum_{k\in\mathbb{N}}\sum_{\gamma\in\mathcal{C}(\tau)}\sum_{i=1}^{p-1}\frac{p^\lambda\sum_{s\geq k}V^s_{k,1}(\zeta_p^i\gamma)d_s(\gamma^p)}{(x-\zeta_p^i\gamma)^k}.\] To see that \[p^\lambda\sum_{s\geq k}V^s_k(\zeta_p^i\gamma)d_s(\gamma^p)=\zeta_p^{ki}(\tilde{c}_k(\gamma)+d_k(\gamma)),\] note that by Lemma~\ref{lem:mahler-coefficients} \[V^s_{k,1}(\zeta_p^i\gamma)=(\zeta_p^i\gamma)^{k-sp}\cdot\mathbb{V}^{s}_{k,1}=\zeta_p^{ki}V^s_{k,1}(\gamma)\] for every $s\geq k$ simultaneously, and \[p^\lambda\sum_{s\geq k}V^s_{k,1}(\gamma)d_s(\gamma^p)=\tilde{c}_k(\gamma)+d_k(\gamma)\] by the definition of $\tilde{\mathbf{c}}=\mathcal{D}_{\lambda,\tau}(\mathbf{d})$ and that of the map $\mathcal{D}_{\lambda,\tau}$ in Definition~\ref{defn:d-map}. 
For $\gamma\in\mathcal{C}(\tau)$ and $1\leq i \leq p-1$, let $S(\gamma,i):=\bigl\{\alpha\in\tau \ \big| \ \alpha^{p^{h-1}}=\zeta_p^i\gamma\bigr\}.$ Then $\tau_h$ is the disjoint union of the sets $S(\gamma,i)$, and it follows from Lemma~\ref{lem:mahler-coefficients-computations} that, for each $\gamma\in\mathcal{C}(\tau)$ and $1\leq i \leq p-1$, \begin{equation}\label{eq:g1-proof}\sigma^{h-1}\left(\sum_{k\in\mathbb{N}}\frac{\zeta_p^{ik}(\tilde{c}_k(\gamma)+d_k(\gamma))}{(x-\zeta_p^i\gamma)^k}\right)=\sum_{k\in\mathbb{N}}\sum_{\alpha\in S(\gamma,i)}\frac{\sum_{s\geq k}V^s_{k,h-1}(\alpha)\zeta_p^{is}(\tilde{c}_s(\gamma)+d_s(\gamma))}{(x-\alpha)^k}.\end{equation} For each $\alpha\in S(\gamma,i)\Leftrightarrow \alpha^{p^{h-1}}=\zeta_p^i\gamma$, we compute \[\alpha^{p^{h+e-1}}=\left(\alpha^{p^{h-1}}\right)^{p^e}=\left(\zeta_p^i\gamma\right)^{p^e}=\gamma\qquad\text{and}\qquad \zeta_p^{is}=\left(\alpha^{p^{h-1}}\gamma^{-1}\right)^s=\alpha^{sp^{h-1}(1-p^e)},\] and therefore we can rewrite each summand \[V^s_{k,h-1}(\alpha)\zeta_p^{is}(\tilde{c}_s(\gamma)+d_s(\gamma))=V^s_{k,h-1}(\alpha)\alpha^{sp^{h-1}(1-p^e)}\left(\tilde{c}_s\left(\alpha^{p^{h+e-1}}\right)+d_s\left(\alpha^{p^{h+e-1}}\right)\right).\] By Lemma~\ref{lem:mahler-coefficients}, $V^s_{k,h-1}(\alpha)=\mathbb{V}^s_{k,h-1}\cdot\alpha^{k-sp^{h-1}}$, and therefore \[V^s_{k,h-1}(\alpha)\alpha^{sp^{h-1}(1-p^e)}=\mathbb{V}^s_{k,h-1}\cdot\alpha^{k-sp^{h-1}}\cdot\alpha^{sp^{h-1}(1-p^e)}=\mathbb{V}^s_{k,h-1}\alpha^{k-sp^{h+e-1}}.\] Hence \eqref{eq:g1-proof} is equal to \[\sum_{k\in\mathbb{N}}\sum_{\alpha\in S(\gamma,i)}\frac{\sum_{s\geq k}\mathbb{V}^s_{k,h-1}\alpha^{k-sp^{h+e-1}}\left(\tilde{c}_s\left(\alpha^{p^{h+e-1}}\right)+d_s\left(\alpha^{p^{h+e-1}}\right)\right)}{(x-\alpha)^k},\] and our result follows by summing over $\gamma\in\mathcal{C}(\gamma)$ and $1\leq i \leq p-1$.
\end{proof}

\begin{proof}[Proof of Lemma~\protect{\ref{lem:uniform-height}}] First of all, $|\tau_h|=(p^h-p^{h-1})e$ because there are $e$ elements in $\mathcal{C}(\tau)$, each of which has $(p-1)$ distinct $p^\text{th}$ roots (of height $1$) that do not belong to $\mathcal{C}(\tau)$, and each of these latter elements has $p^{h-1}$ distinct $(p^{h-1})^\text{th}$ distinct roots -- it follows from the Definition~\ref{defn:torsion-height} that $\alpha\in\tau$ has height $\eta(\alpha)=h$ if and only if $\alpha$ is a $(p^{h-1})^\text{th}$ root of an element of height $1$. Moreover, the elements $\alpha^{-\lambda}\bar{c}_\lambda(\alpha)$ are all equal to one another if and only if they are all equal to their arithmetic average. So it remains to show that $\alpha^{-\lambda}\bar{c}_\lambda(\alpha)$ is independent of $\alpha$.

Now let $g_\tau\in\mathbb{K}(x)_\tau$ such that $\bar{f}_\tau=\Delta_\lambda(g_\tau)$. By Lemma~\ref{lem:stable-support}(7), $\mathrm{ord}(g,\tau)=\mathrm{ord}(f,\tau)=\lambda$, so we can write \[g_\tau=\sum_{k=1}^\lambda\sum_{n=0}^{h-1}\sum_{\alpha\in\tau_n}\frac{d_k(\alpha)}{(x-\alpha)^k},\] because if $g$ had a pole in $\tau_n$ for some $n\geq h$ then $\Delta_\lambda(g_\tau)=f_\tau$ would have a pole in $\tau_{n+1}$, contradicting our assumptions. Let $\mathbf{d}=(d_k(\gamma))$ for $\gamma$ ranging over $\mathcal{C}(\tau)$ only. Since $\Delta_\lambda(g_\tau)=f_\tau$ has no poles in $\mathcal{C}(\tau)$, we must have $\mathbf{d}\in\mathrm{ker}(\mathcal{D}_{\lambda,\tau})$ by Lemma~\ref{lem:cyclic-component-short}. In particular, for each $\gamma\in\mathcal{C}(\tau)$ we must have \[0=c_\lambda(\gamma)=(\mathcal{D}_{\lambda,\tau}(\mathbf{d}))_{\lambda,\gamma}=-d_\lambda(\gamma)+\sum_{s\geq\lambda}p^\lambda V^s_{\lambda,1}(\gamma)=\gamma^{\lambda-p\lambda}d_\lambda(\gamma^p)-d_\lambda(\gamma),\] since $d_s(\gamma)=0$ for every $s>\lambda$ and $\gamma\in\mathcal{C}(\tau)$ and $V^\lambda_{\lambda,1}(\gamma)=p^{-\lambda}\gamma^{\lambda-p\lambda}$ by Corollary~\ref{cor:mahler-coefficients}, and therefore $\gamma^{-\lambda}d_\lambda(\gamma)=\bar{\omega}$ is a constant that does not depend on $\gamma\in\mathcal{C}(\tau)$. This is the base case $n=0$ of an induction argument showing that $\alpha^{-\lambda}d_\lambda(\alpha)=\bar{\omega}$ is independent of $\alpha\in\tau_n$ for $0\leq n\leq h-1$. Indeed, it follows from Lemma~\ref{lem:mahler-coefficients-computations} and our assumption that $\mathrm{sing}(f,\tau)\cap\mathcal{C}(\tau)=\emptyset$ that
 \begin{multline}\label{eq:uniform-height}\Delta_\lambda\left(\sum_{n=0}^{h-1}\sum_{\alpha\in\tau_n}\frac{d_\lambda(\alpha)}{(x-\alpha)^\lambda}\right)=\sum_{n=0}^{h-1}\sum_{\alpha\in\tau_{n+1}}\frac{p^\lambda V^\lambda_{\lambda,1}(\alpha)d_\lambda(\alpha^p)-d_\lambda(\alpha)}{(x-\alpha)^\lambda}+(\text{lower-order terms})\\ 
=\sum_{n=0}^{h-1}\sum_{\alpha\in\tau_{n+1}}\frac{\alpha^\lambda\cdot ((\alpha^p)^{-\lambda}d_\lambda(\alpha^p))-d_\lambda(\alpha)}{(x-\alpha)^\lambda}+(\text{lower-order terms}) \\ =\sum_{\alpha\in\tau_h}\frac{\bar{c}_\lambda(\alpha)}{(x-\alpha)^\lambda}+(\text{lower-order terms}),
\end{multline} where the second equality follows from the computation $V^{\lambda}_{\lambda,1}(\alpha)=p^{-\lambda}\alpha^{\lambda-p\lambda}$ in Corollary~\ref{cor:mahler-coefficients}. In case $h=1$ we have already concluded our induction argument. In case $h\geq 2$, we proceed with our induction argument and find from \eqref{eq:uniform-height} that we must have \[\alpha^\lambda\cdot ((\alpha^p)^{-\lambda}d_\lambda(\alpha^p))-d_\lambda(\alpha)=0\qquad\Longleftrightarrow\qquad \alpha^{-\lambda}d_\lambda(\alpha)=(\alpha^p)^{-\lambda}d_\lambda(\alpha^p)=\bar{\omega}\] for each $\alpha\in\tau_{n+1}$ whenever $n+1\leq h-1$, since $\alpha^p\in \tau_n$ for such an $\alpha$, concluding our induction argument. Finally, since $d_\lambda(\alpha)=0$ for $\alpha\in\tau_h$, we find again that \[\bar{c}_\lambda(\alpha)=\alpha^\lambda\cdot ((\alpha^p)^{-\lambda}d_\lambda(\alpha^p))=\alpha^\lambda\bar{\omega}\] for $\alpha\in\tau_h$, since $d_\lambda(\alpha)=0$ and $\alpha^p\in\tau_{h-1}$ for such $\alpha$, whence each $d_\lambda(\alpha^p)=\alpha^{p\lambda}\bar{\omega}$. \end{proof}

\subsection{Proof of the Main Theorem}\label{sec:proof} Let us now gather our earlier results into a formal proof of the Main Theorem stated in the introduction, that the $\lambda$-Mahler discrete residue at $\infty$ constructed in Definition~\ref{defn:laurent-residues} for the Laurent polynomial component $f_{\infty}$, together with the $\lambda$-Mahler discrete residues at Mahler trees $\tau\in\mathcal{T}$ constructed in Definition~\ref{defn:non-torsion-residues} for non-torsion $\tau\in\mathcal{T}_0$ and in Definition~\ref{defn:torsion-residues} for torsion $\tau\in\mathcal{T}_+$, comprise a complete obstruction to the $\lambda$-Mahler summability problem.

\begin{mthm} For $\lambda\in\mathbb{Z}$, $f\in\mathbb{K}(x)$ is $\lambda$-Mahler summable if and only if 
\mbox{$\mathrm{dres}_\lambda(f,\infty)=\mathbf{0}$} and $\mathrm{dres}_\lambda(f,\tau,k)=\mathbf{0}$ for every $\tau\in\mathcal{T}$ and every $k\in\mathbb{N}$.
\end{mthm}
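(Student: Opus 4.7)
The plan is to assemble the proof by stitching together the decomposition results of Section~\ref{sec:preliminaries} with the three summability-versus-residue equivalences already established separately for the Laurent polynomial part, the non-torsion tree components, and the torsion tree components. No new analytic input is needed at this stage; the theorem follows from bookkeeping across the $\mathbb{K}$-linear decomposition
\[
\mathbb{K}(x) \ \simeq\ \mathbb{K}[x,x^{-1}]\ \oplus\ \bigoplus_{\tau\in\mathcal{T}}\mathbb{K}(x)_\tau,
\]
combined with the fact that the operator $\Delta_\lambda=p^\lambda\sigma-\mathrm{id}$ respects each summand.

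More precisely, I would first invoke Lemma~\ref{lem:rational-decomposition} to reduce $\lambda$-Mahler summability of $f$ to the simultaneous $\lambda$-Mahler summability of its Laurent polynomial component $f_\infty$ and its ``tail'' $f_\mathcal{T}$. Next I would apply Lemma~\ref{lem:tree-decomposition} to further split $f_\mathcal{T}=\sum_{\tau\in\mathcal{T}}f_\tau$ and reduce $\lambda$-Mahler summability of $f_\mathcal{T}$ to that of each $f_\tau$ separately. Note that, although the direct sum is a priori infinite, the sum is effectively finite since $f_\tau=0$ for all but finitely many $\tau$, so the simultaneous condition makes sense and both Lemmas apply directly.

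With these reductions in hand, I would finish by invoking, in the order listed: Proposition~\ref{prop:laurent-residues}, which gives $f_\infty$ is $\lambda$-Mahler summable $\Longleftrightarrow\ \mathrm{dres}_\lambda(f,\infty)=\mathbf{0}$; Proposition~\ref{prop:non-torsion-residues}, which handles the non-torsion case $\tau\in\mathcal{T}_0$; and Proposition~\ref{prop:torsion-residues}, which handles the torsion case $\tau\in\mathcal{T}_+$. Since $\mathcal{T}=\mathcal{T}_0\cup\mathcal{T}_+$ is a disjoint union by Definition~\ref{defn:cycles}, the residue conditions across all $\tau\in\mathcal{T}$ and all $k\in\mathbb{N}$ together with the residue condition at $\infty$ are precisely equivalent to the $\lambda$-Mahler summability of every component, and hence of $f$ itself.

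There is no essential obstacle at this step: all of the hard work (constructing the reductions $\bar f_{\lambda,\infty}$, $\bar f_{\lambda,\tau}$ for $\tau\in\mathcal{T}_0$, and $\bar f_{\lambda,\tau}$ for $\tau\in\mathcal{T}_+$, and verifying via Theorem~\ref{thm:summable-dispersion} and Lemma~\ref{lem:mahler-coefficients-computations} that they vanish iff $f$ is $\lambda$-Mahler summable) has already been carried out in Sections~\ref{sec:summable-dispersion} and~\ref{sec:residues}. The proof itself should therefore be short, amounting to a single compound biconditional: $f$ is $\lambda$-Mahler summable iff $f_\infty$ and all $f_\tau$ are iff all the associated discrete residue vectors vanish.
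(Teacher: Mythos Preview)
Your proposal is correct and follows essentially the same approach as the paper's own proof: reduce via Lemma~\ref{lem:rational-decomposition} and Lemma~\ref{lem:tree-decomposition} to the individual components $f_\infty$ and $f_\tau$, then invoke Propositions~\ref{prop:laurent-residues}, \ref{prop:non-torsion-residues}, and \ref{prop:torsion-residues} to obtain the residue characterization for each piece. The paper's proof is slightly terser, but the logical structure and the lemmas/propositions invoked are identical.
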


\begin{proof} Let $f\in\mathbb{K}(x)$. By Lemma~\ref{lem:rational-decomposition}, $f$ is $\lambda$-Mahler summable if and only if both $f_{\infty}$ and $f_\mathcal{T}$ are Mahler summable. By Proposition~\ref{prop:laurent-residues}, $f_{\infty}$ is $\lambda$-Mahler summable if and only if $\mathrm{dres}(f,\infty)=\mathbf{0}$. By Lemma~\ref{lem:tree-decomposition}, $f_\mathcal{T}$ is $\lambda$-Mahler summable if and only if $f_\tau$ is $\lambda$-Mahler summable for each $\tau\in\mathcal{T}=\mathcal{T}_0\cup\mathcal{T}_+$. By Proposition~\ref{prop:non-torsion-residues} in the non-torsion case $\tau\in\mathcal{T}_0$, and by Proposition~\ref{prop:torsion-residues} in the torsion case $\tau\in\mathcal{T}_+$, $f_\tau$ is $\lambda$-Mahler summable if and only if $\mathrm{dres}_\lambda(f,\tau,k)=\mathbf{0}$ for every $k\in\mathbb{N}$.
\end{proof}


\subsection{Mahler reduction}\label{sec:reduction}

We can now define the $\lambda$-Mahler reduction $\bar{f}_\lambda$ of $f\in\mathbb{K}(x)$ in \eqref{eq:mahler-reduction}, in terms of the local reductions constructed in the proofs of Proposition~\ref{prop:laurent-residues}, Proposition~\ref{prop:non-torsion-residues}, and Proposition~\ref{prop:torsion-residues}: \begin{equation}\label{eq:reduction} \bar{f}_\lambda:=\sum_{\theta\in\mathbb{Z}/\mathcal{P}}\bar{f}_{\lambda,\theta}+\sum_{\tau\in\mathcal{T}}\bar{f}_{\lambda,\tau}=\sum_{\theta\in\mathbb{Z}/\mathcal{P}}\mathrm{dres}_\lambda(f,\infty)_\theta\cdot x^{i_\theta h_\theta(f)}+\sum_{k\in\mathbb{N}}\sum_{\tau\in\mathcal{T}}\sum_{\alpha\in\tau}\frac{\mathrm{dres}_\lambda(f,\tau,k)_\alpha}{(x-\alpha)^k}.\end{equation} We refer to Remark~\ref{rem:laurent-remainder}, Remark~\ref{rem:non-torsion-remainder}, and Remark~\ref{rem:torsion-remainder} for more details.

In the un-twisted case where $\lambda=0$, we had already defined $0$-Mahler discrete residues in \cite{arreche-zhang:2022}, where we proved that they comprise a complete obstruction to what we call here the $0$-Mahler summability problem. That the $\mathrm{dres}(f,\infty)$ of \cite[Def.~4.1]{arreche-zhang:2022} agrees with the $\mathrm{dres}_0(f,\infty)$ of Definition~\ref{defn:laurent-residues} is immediately clear from the formulas. In contrast, the Mahler discrete residues $\mathrm{dres}(f,\tau,k)$ at non-torsion Mahler trees $\tau\in\mathcal{T}_0$ in \cite[Def.~4.10]{arreche-zhang:2022} were defined recursively, using the Mahler coefficients $V^s_{k,1}(\alpha)$ only, whereas here we provide closed formulas using the full set of Mahler coefficients $V^s_{k,n}$ with $n\geq 1$ for $\mathrm{dres}_0(f,\tau,k)$ in Definition~\ref{defn:non-torsion-residues}. Similarly, the Mahler discrete residues at torsion Mahler trees $\tau\in\mathcal{T}_+$ in \cite[Def.~4.16]{arreche-zhang:2022} are defined recursively and in terms of an auxiliary $\mathbb{K}$-linear map (see~\cite[Def.~4.15]{arreche-zhang:2022}), whereas here we provide a closed formulas in terms of a different\footnote{The auxiliary $\mathbb{K}$-linear map in \cite[Def.~4.15]{arreche-zhang:2022} is essentially a truncated version of the map $\mathcal{I}^{(0)}_{0,e}$ of Definition~\ref{defn:omega-section}, in terms of the latter of which we defined $\mathcal{I}^{(0)}_{0,\tau}$ (cf.~Corollary~\ref{cor:inverse-gamma-independent}).} auxiliary $\mathbb{K}$-linear map $\mathcal{I}^{(0)}_{0,\tau}$ in Definition~\ref{defn:torsion-residues}. It is not clear at all (to us) from their respective definitions that the $\mathrm{dres}(f,\tau,k)$ of \cite{arreche-zhang:2022} should agree with the $\mathrm{dres}_0(f,\tau,k)$ defined here. And yet, they do.

\begin{prop}\label{prop:comparison} The Mahler discrete residues $\mathrm{dres}(f,\tau,k)$ of \cite{arreche-zhang:2022} coincide with the $0$-Mahler discrete residues $\mathrm{dres}_0(f,\tau,k)$ in Definitions~\ref{defn:non-torsion-residues} and~\ref{defn:torsion-residues}.
\end{prop}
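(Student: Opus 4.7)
The plan is to identify both $\mathrm{dres}(f,\tau,k)$ of \cite{arreche-zhang:2022} and $\mathrm{dres}_0(f,\tau,k)$ of Definitions~\ref{defn:non-torsion-residues} and~\ref{defn:torsion-residues} as the partial-fraction coefficients of two canonical remainders of $f_\tau$ modulo $\mathrm{im}(\Delta_0)$, and then to show that these remainders must coincide via a dispersion-based uniqueness argument.

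First I would unpack from the proofs in \cite{arreche-zhang:2022} the explicit remainder $\bar{f}^{\mathrm{old}}_\tau \in \mathbb{K}(x)_\tau$ whose partial-fraction coefficients are the $\mathrm{dres}(f,\tau,k)_\alpha$ of that paper. By construction, this $\bar{f}^{\mathrm{old}}_\tau$ has two key properties: $f_\tau - \bar{f}^{\mathrm{old}}_\tau \in \Delta_0(\mathbb{K}(x)_\tau)$, and every pole of $\bar{f}^{\mathrm{old}}_\tau$ lies in the top layer $\tau_h$ of those $\alpha \in \tau$ of maximal height $h := \mathrm{ht}(f,\tau)$, where heights are as in Definition~\ref{defn:non-torsion-height} or Definition~\ref{defn:torsion-height} according to whether $\tau \in \mathcal{T}_0$ or $\tau \in \mathcal{T}_+$. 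The top-layer support is inherent in the recursive constructions of \cite{arreche-zhang:2022}, which apply $\Delta_0$ iteratively to shift all remaining poles up to height $h$. The corresponding remainder $\bar{f}^{\mathrm{new}}_\tau$ for the new definitions has the same two properties, by Remark~\ref{rem:non-torsion-remainder} or Remark~\ref{rem:torsion-remainder} specialized to $\lambda = 0$, combined with the top-layer vanishing of $\mathrm{dres}_0(f,\tau,k)_\alpha$ prescribed explicitly in Definitions~\ref{defn:non-torsion-residues} and~\ref{defn:torsion-residues}.

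Next I would observe that $\bar{f}^{\mathrm{old}}_\tau - \bar{f}^{\mathrm{new}}_\tau \in \Delta_0(\mathbb{K}(x)_\tau)$ is $0$-Mahler summable and still has every pole in $\tau_h$. When $h \geq 1$, for any $\alpha \in \tau_h$ and any $d \geq 1$ the element $\alpha^{p^d}$ has strictly smaller height than $h$, so $\alpha^{p^d} \notin \tau_h$; hence this difference, if it were nonzero, would have $\mathrm{disp}(\cdot,\tau) = 0$. Theorem~\ref{thm:summable-dispersion}(2) applied with $\lambda = 0$ then forces the difference to vanish, and comparing partial-fraction coefficients yields $\mathrm{dres}(f,\tau,k) = \mathrm{dres}_0(f,\tau,k)$ for every $k \in \mathbb{N}$. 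The boundary case $h = 0$ is immediate: both constructions return $\bar{f}_\tau = f_\tau$ (there is nothing to reduce), so equality of residues holds tautologically.

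The main obstacle will be the careful verification of the top-layer support property for the recursive remainder $\bar{f}^{\mathrm{old}}_\tau$ of \cite{arreche-zhang:2022}, which is not recorded there in these terms. In the non-torsion case this reduces to a direct induction on $h$ following the recursion of \cite[Def.~4.10]{arreche-zhang:2022}. In the torsion case one must additionally track the effect of the auxiliary $\mathbb{K}$-linear map of \cite[Def.~4.15]{arreche-zhang:2022}, which (as recorded in the footnote immediately preceding this Proposition) is essentially a truncated version of $\mathcal{I}^{(0)}_{0,\tau}$, and whose role is precisely to concentrate the cyclic contributions at $\mathcal{C}(\tau)$ onto the top layer through a final application of $\Delta_0$. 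Once these properties have been confirmed, the dispersion uniqueness argument above finishes the proof.
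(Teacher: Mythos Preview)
Your proposal is correct and follows essentially the same approach as the paper's proof: both construct the two reductions $\bar{f}^{\mathrm{old}}_\tau$ and $\bar{f}^{\mathrm{new}}_\tau$, observe that each differs from $f_\tau$ by a $0$-Mahler summable element, and conclude equality by noting that a nonzero difference would have dispersion $0$, contradicting Theorem~\ref{thm:summable-dispersion}. The only difference is one of emphasis: the paper simply cites \cite[Equation~(4.16)]{arreche-zhang:2022} for the existence and top-layer support of $\bar{f}^{\mathrm{old}}_\tau$, whereas you (appropriately) flag this verification as the main bookkeeping obstacle.
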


\begin{proof}
It is clear from \cite[Defs.~4.10 and 4.16]{arreche-zhang:2022} and Definitions~\ref{defn:non-torsion-residues} and \ref{defn:torsion-residues} that the support of both vectors $\mathrm{dres}(f,\tau,k)$ and $\mathrm{dres}_0(f,\tau,k)$ is contained in the set of $\alpha\in\tau$ such that $\eta(\alpha|f)=\mathrm{ht}(f,\tau)$ in the non-torsion case (see Definition~\ref{defn:non-torsion-height}) and such that $\eta(\alpha)=\mathrm{ht}(f,\tau)$ in the torsion case (see Definition~\ref{defn:torsion-height}). In the torsion case $\tau\in\mathcal{T}_+$ such that $\mathrm{ht}(f,\tau)=0$, it is immediately clear from the definitions that $\mathrm{dres}(f,\tau,k)=\mathrm{dres}_0(f,\tau,k)$, so we can assume without loss of generality that either $\tau\in\mathcal{T}_0$ or $\mathrm{ht}(f,\tau)\geq 1$. In \cite[Equation~(4.16)]{arreche-zhang:2022} we constructed a Mahler reduction \[\bar{f}_\tau=\sum_{k\in\mathbb{N}}\sum_{\alpha\in\tau} \frac{\mathrm{dres}(f,\tau,k)_\alpha}{(x-\alpha)^k}\] such that $\bar{f}_\tau-f$ is Mahler summable (see \cite[\S4.4]{arreche-zhang:2022}), whereas here we have constructed an analogous $\bar{f}_{0,\tau}$ in \eqref{eq:reduction} with the same property that $\bar{f}_{0,\tau}-f_\tau$ is $0$-Mahler summable. Therefore \[(\bar{f}_{0,\tau}-f_\tau)-(\bar{f}_\tau-f_\tau)=\bar{f}_{0,\tau}-\bar{f}_\tau=\sum_{k\in\mathbb{N}}\sum_{\alpha\in\tau}\frac{\mathrm{dres}_0(f,\tau,k)_\alpha-\mathrm{dres}(f,\tau,k)_\alpha}{(x-\alpha)}\] is $0$-Mahler summable. If we had $\bar{f}_{0,\tau}\neq \bar{f}_\tau$ then $\mathrm{disp}(\bar{f}_{0,\tau}-\bar{f}_\tau,\tau)=0$ would contradict Theorem~\ref{thm:summable-dispersion}, so we conclude that $\mathrm{dres}_0(f,\tau,k)=\mathrm{dres}(f,\tau,k)$ for every $\tau\in\mathcal{T}$ and $k\in\mathbb{N}$.
\end{proof}


\section{Differential relations among solutions of first-order Mahler equations} \label{sec:telescoping} Let us now consider the differential structures that we shall consider for the most immediate applications of our $\lambda$-Mahler discrete residues. 
We denote by \[\partial:=x\frac{d}{dx}\] the unique $\mathbb{K}$-linear derivation on $\mathbb{K}(x)$ such that $\partial(x)=x$. We immediately compute that $p\sigma\circ\partial=\partial\circ\delta$ as derivations on $\mathbb{K}(x)$. In order to remedy this, one can proceed as proposed by Michael Singer (see \cite{dhr-m}), to work in the overfield $\mathbb{K}(x,\log x)$ and introduce the derivation \[\delta=x\log x \frac{d}{dx}=\log x\cdot \partial.\] We insist that the notation $\log x$ is meant to be suggestive only: here $\log x$ is a new transcendental element satisfying $\sigma(\log x)=p\cdot\log x$ and $\partial(\log x)=1$. Using these properties alone, one can verify that $\delta\circ\sigma=\sigma\circ\delta$ as derivations on all of $\mathbb{K}(x,\log x)$.

The following computational result is a Mahler analogue of \cite[Lem.~3.4]{ArrecheZhang2020}, and of an analogous and more immediate computation in the shift case, which occurs in the proof of \cite[Cor.~2.1]{arreche:2017}. We wish to emphasize that the computation is actually quite straightforward in the case of $\lambda$-Mahler discrete residues at non-torsion Mahler trees $\tau\in\mathcal{T}_0$, and in contrast, rather involved for torsion Mahler trees $\tau\in\mathcal{T}_+$, to to the additional ingredients involved in that case.

\begin{lem}\label{lem:nishioka}
    Let $0\neq a\in\mathbb{K}(x)$. For $\lambda\geq 1$, $\tau\in\mathcal{T}$, and $\alpha\in\tau$, \[\mathrm{dres}_\lambda\left(\partial^{\lambda-1}\left(\frac{\partial(a)}{a}\right),\tau,\lambda\right)_\alpha=(-1)^{\lambda-1}(\lambda-1)!\alpha^{\lambda-1}\cdot\mathrm{dres}_1\left(\frac{\partial(a)}{a},\tau,1\right)_\alpha\in\mathbb{Q}\cdot\alpha^\lambda.\]
\end{lem}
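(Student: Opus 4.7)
The plan is to exploit the special structure of the logarithmic derivative $f:=\partial(a)/a$. If $a=c\cdot x^{n_0}\prod_j(x-\alpha_j)^{n_j}$ with $\alpha_j\in\mathbb{K}^\times$ pairwise distinct and $n_j:=\mathrm{ord}_{\alpha_j}(a)\in\mathbb{Z}$, then
\[f = \Bigl(n_0+\sum_j n_j\Bigr)+\sum_j\frac{n_j\alpha_j}{x-\alpha_j},\]
so $f$ has only simple poles, $\mathrm{ord}(f,\tau)=1$ for each $\tau\in\mathrm{supp}(f)$, and the residue $c^f_1(\alpha)=n_\alpha\alpha$ at any pole $\alpha\in\tau$ lies in $\mathbb{Z}\cdot\alpha$. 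A straightforward induction on $\lambda\geq 1$ shows that $\partial^{\lambda-1}\bigl(1/(x-\alpha)\bigr)=\sum_{k=1}^\lambda c^{(\lambda)}_k\alpha^{k-1}(x-\alpha)^{-k}$ for certain universal $c^{(\lambda)}_k\in\mathbb{Q}$ independent of $\alpha$, with leading coefficient $c^{(\lambda)}_\lambda=(-1)^{\lambda-1}(\lambda-1)!$. Setting $F:=\partial^{\lambda-1}(f)$, it follows that $\mathrm{sing}(F,\tau)=\mathrm{sing}(f,\tau)$, so $h:=\mathrm{ht}(F,\tau)=\mathrm{ht}(f,\tau)$, $\mathrm{ord}(F,\tau)=\lambda$, and the leading coefficient at each pole is $c^F_\lambda(\alpha)=(-1)^{\lambda-1}(\lambda-1)!\,\alpha^{\lambda-1}\,c^f_1(\alpha)$.

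For $\tau\in\mathcal{T}_0$, applying formula~\eqref{eq:non-torsion-residues} of Definition~\ref{defn:non-torsion-residues} with $k=s=\lambda$ (only this $s$ survives since $\mathrm{ord}(F,\tau)=\lambda$), together with Corollary~\ref{cor:mahler-coefficients} giving $V^\lambda_{\lambda,n}(\alpha)=p^{-n\lambda}\alpha^{\lambda-p^n\lambda}$, telescopes into $\mathrm{dres}_\lambda(F,\tau,\lambda)_\alpha=(-1)^{\lambda-1}(\lambda-1)!\,\alpha^\lambda\sum_{n=0}^h n_{\alpha^{p^n}}$. The same computation at $\lambda=1$ gives $\mathrm{dres}_1(f,\tau,1)_\alpha=\alpha\sum_{n=0}^h n_{\alpha^{p^n}}$, delivering both the identity and the membership in $\mathbb{Q}\cdot\alpha^\lambda$.

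For $\tau\in\mathcal{T}_+$ I split by the height. When either $h=0$ or $h\geq 1$ with $\alpha=\gamma\in\mathcal{C}(\tau)$, all $d$-coefficients at degrees exceeding the respective orders vanish, so formula~\eqref{eq:torsion-residues-cycle} collapses to $\tfrac{\gamma^\lambda}{e}\sum_{j=1}^e\gamma^{-\lambda p^j}c^F_\lambda(\gamma^{p^j})$, and substituting the leading-coefficient identity yields the claimed ratio at once. The real difficulty lies in the case $h\geq 1$ with $\eta(\alpha)=h$, where~\eqref{eq:torsion-residues} involves $d^F_\lambda(\beta)$ and $\tilde c^F_\lambda(\beta)$ at $\beta:=\alpha^{p^{h+e-1}}\in\mathcal{C}(\tau)$. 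The key technical claim is
\begin{equation*}
d^F_\lambda(\gamma) \;=\; (-1)^{\lambda-1}(\lambda-1)!\,\gamma^{\lambda-1}\,d^f_1(\gamma)\qquad(\gamma\in\mathcal{C}(\tau)). \tag{$\ast$}
\end{equation*}

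To establish~($\ast$), first verify the analogous identity for the $\mathcal{I}^{(0)}$-values $d^{F,(0)}_\lambda$ and $d^{f,(0)}_1$ by direct inspection of~\eqref{eq:omega-section-lambda}, which simplifies drastically because $d^F_s=0$ for $s>\lambda$ and $d^f_s=0$ for $s>1$; the leading-coefficient identity then forces the proportionality. Next, show $\omega_{\lambda,\tau}(F)=(-1)^{\lambda-1}(\lambda-1)!\,\omega_{1,\tau}(f)$ by termwise comparison in Definition~\ref{defn:omega-average}, using the leading-coefficient identity in the first sum and the simplification $\tilde c_{\lambda_0}(\gamma)+d^{(0)}_{\lambda_0}(\gamma)=\gamma^{\lambda_0(1-p)}\,d^{(0)}_{\lambda_0}(\gamma^p)$ (valid for $\lambda_0=\lambda$ applied to $F$ and for $\lambda_0=1$ applied to $f$) in the second. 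Since $w^{(\lambda_0)}_{\lambda_0}(\gamma)=\gamma^{\lambda_0}$ by Lemma~\ref{lem:d-map-kernel}, combining these pieces with the definition $d=d^{(0)}+\omega\cdot\mathbf{w}^{(\lambda_0)}$ yields~($\ast$). With~($\ast)$ in hand, substituting into~\eqref{eq:torsion-residues} and factoring $(-1)^{\lambda-1}(\lambda-1)!\,\alpha^{\lambda-1}$ out of both the initial sum (via the leading-coefficient identity) and the boundary term (via~($\ast$)) completes the proof. The $\mathbb{Q}\cdot\alpha^\lambda$ assertion follows by tracking rationality: $c^f_1(\alpha)\in\mathbb{Z}\cdot\alpha$, the residual average $\omega_{1,\tau}(f)\in\mathbb{Q}$, and all Mahler coefficients involved are rational.
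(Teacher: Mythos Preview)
Your argument is correct and follows essentially the same route as the paper's own proof: both compute the leading partial-fraction coefficient of $\partial^{\lambda-1}(f)$, reduce the non-torsion case via Corollary~\ref{cor:mahler-coefficients}, and in the torsion case establish exactly your claim~($\ast$) by first comparing the $\mathcal{I}^{(0)}$-values through~\eqref{eq:omega-section-lambda}, then the residual averages through~\eqref{eq:omega-average}, and finally assembling via $w^{(\lambda)}_\lambda(\gamma)=\gamma^\lambda$. One small imprecision: when $h=0$, formula~\eqref{eq:torsion-residues-cycle} does not apply---Definition~\ref{defn:torsion-residues} sets $\mathrm{dres}_\lambda(F,\tau,\lambda)_\gamma=c^F_\lambda(\gamma)$ directly---so the desired identity follows immediately from the leading-coefficient relation rather than from the averaged sum you wrote (which would give a different value).
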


\begin{proof}
    Let $a=b\prod_{\alpha\in\mathbb{K}}(x-\alpha)^{m(\alpha)}$, where $0\neq b\in\mathbb{K}$ and $m(\alpha)\in\mathbb{Z}$, almost all zero, and let \[f:=\frac{\partial(a)}{a}=c(0)+\sum_{\alpha\in\mathbb{K}^\times}\frac{m(\alpha)x}{x-\alpha}=\sum_{\alpha\in\mathbb{K}}m(\alpha)+\sum_{\alpha\in\mathbb{K}^\times}\frac{\alpha\cdot m(\alpha)}{x-\alpha}.\] Then we compute, using a similar induction argument as in \cite[Lem.~3.4]{ArrecheZhang2020}, that for $\tau\in\mathcal{T}$ and $\lambda\geq 1$: \begin{equation}\label{eq:iterated-derivative}\partial^{\lambda-1}(f)_\tau=\sum_{\alpha\in\tau}\frac{(-1)^{\lambda-1}(\lambda-1)!\alpha^\lambda m(\alpha)}{(x-\alpha)^\lambda}+(\text{lower-order terms})=\sum_{k=1}^\lambda\sum_{\alpha\in\tau}\frac{c^{[\lambda]}_k(\alpha)}{(x-\alpha)^k},
    \end{equation} where the notation $c^{[\lambda]}_k(\alpha)$ is meant to let us directly apply the definitions of $\lambda$-Mahler discrete residues of degree $\lambda$ of $\partial^{\lambda-1}(f)$ and more easily compare them with one another. In fact, as we shall see, we will only need to know that $c^{[1]}_1(\alpha)=\alpha\cdot m(\alpha)$, and more generally \begin{equation} \label{eq:c-lambda-1} c^{[\lambda]}_\lambda(\alpha)=(-1)^{\lambda-1}(\lambda-1)!\alpha^{\lambda}m(\alpha)=(-1)^{\lambda-1}(\lambda-1)!\alpha^{\lambda-1}c^{[1]}_1(\alpha).\end{equation} 
    We shall also repeatedly use the results from Lemma~\ref{lem:mahler-coefficients} and Corollary~\ref{cor:mahler-coefficients}, that \[V^\lambda_{\lambda,n}(\alpha)=\mathbb{V}^\lambda_{\lambda,n}\alpha^{\lambda-\lambda p^n}=p^{-\lambda n}\alpha^{\lambda - \lambda p^n},\] without further comment.

    For $\tau\in\mathrm{supp}(f)\cap\mathcal{T}_0$, let $h:=\mathrm{ht}(f,\tau)$, and let $\alpha\in\beta(f,\tau)$ such that $\eta(\alpha|f)=h$ (cf.~Definition~\ref{defn:non-torsion-height}). Then by Definition~\ref{defn:non-torsion-residues}\begin{multline*}\mathrm{dres}_\lambda(\partial^{\lambda-1}(f),\tau,\lambda)_\alpha=\sum_{n=0}^hp^{\lambda n} V^\lambda_{\lambda,n}(\alpha) c^{[\lambda]}_\lambda(\alpha^{p^n})=\sum_{n=0}^hp^{\lambda n}p^{-n\lambda}\alpha^{\lambda-\lambda p^n} c^{[\lambda]}_\lambda(\alpha^{p^n})\\
    =(-1)^{\lambda-1}(\lambda-1)!\alpha^{\lambda}\sum_{n=0}^h m(\alpha^{p^n})=(-1)^{\lambda-1}(\lambda-1)!\alpha^{\lambda-1}\mathrm{dres}_1(f,\tau,1)_\alpha\in\mathbb{Q}\cdot\alpha^\lambda.
    \end{multline*}

    For $\tau\in\mathrm{supp}(f)\cap\mathcal{T}_+$, let us first suppose $\mathrm{ht}(f,\tau)=0$ as in Definition~\ref{defn:torsion-height}, and compute immediately for $\gamma\in\mathcal{C}(\tau),$
    \[\mathrm{dres}_\lambda(\partial^{\lambda-1}(f),\tau,\lambda)_\gamma=c^{[\lambda]}_\lambda(\gamma)=(-1)^{\lambda-1}(\lambda-1)!\gamma^\lambda m(\gamma)=(-1)^{\lambda-1}(\lambda-1)!\gamma^{\lambda-1}\mathrm{dres}_1(f,\tau,1)_\gamma,\] which clearly belongs to $\mathbb{Q}\cdot\gamma^\lambda$.
    On the other hand, if $h:=\mathrm{ht}(f,\tau)\geq 1$, we compute for $\gamma\in\mathcal{C}(\tau)$ using \eqref{eq:torsion-residues-cycle}
    \begin{multline}\label{eq:iterated-derivative-cycle}
    \mathrm{dres}_\lambda(\partial^{\lambda-1}(f),\tau,\lambda)_\gamma=
    \frac{\gamma^\lambda}{e}\sum_{j=1}^e\gamma^{-\lambda p^j} c^{[\lambda]}_\lambda(\gamma^{p^j})=\frac{\gamma^\lambda}{e}\sum_{j=1}^e\gamma^{-\lambda p^j}(-1)^{\lambda-1}(\lambda-1)!\gamma^{\lambda p^j}m(\gamma^{p^j}) \\ =(-1)^{\lambda-1}(\lambda-1)!\frac{\gamma^\lambda}{e}\sum_{j=1}^e m(\gamma^{p^j})=(-1)^{\lambda-1}(\lambda-1)!\gamma^{\lambda-1}\mathrm{dres}_1(f,\tau,1)_\gamma \in\mathbb{Q}\cdot\gamma^\lambda
    \end{multline} Before computing the $\alpha$-component of $\mathrm{dres}_\lambda(\partial^{\lambda-1}(f),\tau,\lambda)$ for $\alpha\in\tau$ such that $\eta(\alpha)=h$, 
    we must first compute a few preliminary objects (cf.~Remark~\ref{rem:torsion-residues-computation}). Consider the vector $\mathbf{d}^{[\lambda]}:=\mathcal{I}^{(0)}_{\lambda,\tau}(\mathbf{c}^{[\lambda]})$ as in Definition~\ref{defn:omega-section}, 
    and let us compute in particular as in \eqref{eq:omega-section-lambda}: \begin{multline}\label{eq:d-lambda-1}
    d^{[\lambda]}_\lambda(\gamma)=\frac{\gamma^\lambda}{e}\sum_{j=0}^{e-1}(j+1-e)\gamma^{-\lambda p^j}c^{[\lambda]}_\lambda(\gamma^{p^j})=
    \frac{\gamma^\lambda}{e}\sum_{j=0}^{e-1}(j+1-e)\gamma^{-\lambda p^j}\cdot (-1)^{\lambda-1}(\lambda-1)!\gamma^{\lambda p^j} m(\gamma^{p^j}) \\ =(-1)^{\lambda-1}(\lambda-1)!\frac{\gamma^\lambda}{e}\sum_{j=0}^{e-1}(j+1-e)m(\gamma^{p^j})=(-1)^{\lambda-1}(\lambda-1)!\gamma^{\lambda-1} d^{[1]}_1(\gamma)
    \end{multline} The $\lambda$-components of $\tilde{\mathbf{c}}^{[\lambda]}:=\mathcal{D}_{\lambda,\tau}(\mathbf{d}^{[\lambda]})$ are simply given by \[\tilde{c}^{[\lambda]}_\lambda(\gamma)=c^{[\lambda]}_\lambda(\gamma)-\mathrm{dres}_\lambda(\partial^{\lambda-1}(f),\tau,\lambda)_\gamma\]
    by Proposition~\ref{prop:omega-section} and \eqref{eq:torsion-residues-cycle}. Therefore, for each $\gamma\in\mathcal{C}(\tau)$, \begin{equation}\label{eq:c-tilde-d}\tilde{c}^{[\lambda]}_\lambda(\gamma)+d^{[\lambda]}_\lambda(\gamma)=\frac{(-1)^{\lambda-1}(\lambda-1)!\gamma^\lambda}{e}\sum_{j=1}^e(j-e)m(\gamma^{p^j}).     
    \end{equation} With this, we next compute the residual average (cf.~Definition~\ref{defn:omega-average}), for which we compute separately the two long sums appearing in \eqref{eq:omega-average}. First, the sum over elements of positive height \begin{multline}\label{eq:omega-average-1}
        \omega_{\lambda,\tau}^{(+)}(\partial^{\lambda-1}(f))=\frac{1}{(p^h-p^{h-1})e}\sum_{\alpha\in\tau_h}\sum_{n=0}^{h-1}p^{\lambda n}\mathbb{V}^\lambda_{\lambda,n}\alpha^{-\lambda p^n} c^{[\lambda]}_\lambda(\alpha^{p^n})\\=\frac{(-1)^{\lambda-1}(\lambda-1)!}{(p^h-p^{h-1})e}\sum_{\alpha\in\tau_h}\sum_{n=0}^{h-1}m(\alpha^{p^n})=(-1)^{\lambda-1}(\lambda-1)!\cdot \omega^{(+)}_{1,\tau}(f).
    \end{multline} Second, the sum over the elements of zero height
    \begin{multline}\label{eq:omega-average-2}
        \omega_{\lambda,\tau}^{(0)}(\partial^{\lambda-1}(f))=\frac{p^{\lambda(e-1)}}{e}\sum_{\gamma\in\mathcal{C}(\tau)}\mathbb{V}^\lambda_{\lambda,h-1}\gamma^{-\lambda}(\tilde{c}^{[\lambda]}(\gamma)+d^{[\lambda]}_\lambda(\gamma))\\=\frac{(-1)^{\lambda-1}(\lambda-1)!}{e^2}\sum_{\gamma\in\mathcal{C}(\tau)}\sum_{j=1}^e(j-e)m(\gamma^{p^j})=(-1)^{\lambda-1}(\lambda-1)!\cdot \omega_{1,\tau}^{(0)}(f).
    \end{multline}
Now putting together \eqref{eq:omega-average-1} and \eqref{eq:omega-average-2} we obtain \begin{gather}\label{eq:omega-average-lambda-1}\omega_{\lambda,\tau}(\partial^{\lambda-1}(f))=\omega_{\lambda,\tau}^{(+)}(\partial^{\lambda-1}(f))-\omega_{\lambda,\tau}^{(0)}(\partial^{\lambda-1}(f))=(-1)^{\lambda-1}(\lambda-1)!\cdot\omega_{1,\tau}(f),\intertext{where}
\omega_{1,\tau}(f)=\omega_{1,\tau}^{(+)}(f)-\omega_{1,\tau}^{(0)}(f)=\frac{1}{(p^h-p^{h-1})e}\sum_{\substack{\alpha\in\tau\\\eta(\alpha> 0}}m(\alpha)-\frac{e-e^2}{2e^2}\sum_{\gamma\in\mathcal{C}(\tau)}m(\gamma)\in\mathbb{Q}.
\end{gather} 

Since the vector $\mathbf{w}^{(\lambda)}$ of Lemma~\ref{lem:d-map-kernel} satisfies $w^{(\lambda)}_\lambda(\gamma)=\gamma^\lambda=\gamma^{\lambda-1} w^{(1)}_1(\gamma),$ we finally compute
\begin{multline}
    \mathrm{dres}_\lambda(\partial^{\lambda-1}(f),\tau,\lambda)_\alpha=\sum_{n=0}^{h-1}p^{n\lambda}V^\lambda_{\lambda,n}(\alpha)c^{[\lambda]}_\lambda(\alpha^{p^n})\\ -p^{\lambda(h-1)}\mathbb{V}^\lambda_{\lambda,h-1}\alpha^{\lambda-\lambda p^{h+e-1}}(\tilde{c}^{[\lambda]}_\lambda(\alpha^{p^{h+e-1}})+d^{[\lambda]}_\lambda(\alpha^{p^{h+e-1}})+\omega_{\lambda,\tau}(\partial^{\lambda-1}(f)) w^{(\lambda)}_\lambda(\alpha^{p^{h+e-1}}))\\=(-1)^{\lambda-1}(\lambda-1)!\alpha^\lambda\left[\sum_{n=0}^{h-1}m(\alpha^{p^n})-\frac{1}{e}\sum_{j=1}^{e}(j-e)m(\alpha^{p^{h+j-1}})+\omega_{1,\tau}(f)\right]\\ =(-1)^{\lambda-1}(\lambda-1)!\alpha^{\lambda-1}\mathrm{dres}_1(f,\tau,1)_\alpha\in\mathbb{Q}\cdot \alpha^\lambda.
\end{multline}
This concludes the proof of the Lemma.\end{proof}

With this preliminary computation now out of the way, we can prove our first application of $\lambda$-Mahler discrete residues in the following result, which is a Mahler analogue of \cite[Cor.~2.1]{arreche:2017} in the shift case and \cite[Prop.~3.5]{ArrecheZhang2020} in the $q$-dilation case.

\begin{prop}\label{prop:nishioka} Let $U$ be a $\sigma\partial$-$\mathbb{K}(x,\log x)$-algebra such that $U^\sigma=\mathbb{K}$. Let $a_1,\dots,a_t\in\mathbb{K}(x)-\{0\},$ and suppose $y_1,\dots,y_t\in U^\times$ satisfy \[\sigma(y_i)=a_iy_i \qquad\text{for}\qquad i=1,\dots,t.\] Then $y_1,\dots,y_t$ are $\partial$-dependent over $\mathbb{K}(x)$ if and only if there exist $k_1,\dots,k_t\in\mathbb{Z}$, not all zero, and $g\in\mathbb{K}(x)$, such that \begin{equation}\label{eq:prop:nishioka}\sum_{i=1}^tk_i\frac{\partial a_i}{a_i}=p\sigma(g)-g.\end{equation}   
\end{prop}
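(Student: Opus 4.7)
Both directions rest on the computational identity
\[\Delta_1\!\left(\frac{\partial y_i}{y_i}\right)=p\,\sigma\!\left(\frac{\partial y_i}{y_i}\right)-\frac{\partial y_i}{y_i}=\frac{\partial a_i}{a_i},\qquad i=1,\dots,t,\]
which is immediate from $\sigma(y_i)=a_iy_i$ together with the commutation $\partial\circ\sigma=p\,\sigma\circ\partial$ on $\K(x)$. Taking the integer-weighted combination shows that the desired summability equation $\sum_ik_i\partial a_i/a_i=\Delta_1(g)$ is equivalent to $\Delta_1(u)=0$ for $u:=\sum_ik_i\partial y_i/y_i-g\in U$. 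Proving both implications thus reduces to understanding the kernel of $\Delta_1$ on the relevant spaces.

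For $(\Leftarrow)$, from $\Delta_1(u)=0$ I would observe that $\sigma(u\log x)=u\log x$, so $u\log x\in U^\sigma=\K$, whence $u=c/\log x$ for some $c\in\K$. Setting $z:=\prod_iy_i^{k_i}\in U^\times$, this rearranges to the identity $(\log x)(\partial z-gz)=cz$ in $U$. Since $\log x$ satisfies $\partial^2(\log x)=0$, applying $\partial^2$ to both sides and expanding with the Leibniz rule eliminates $\log x$ and produces a nontrivial element of $\K(x)\{Y\}$ annihilating $z=\prod_iy_i^{k_i}$, which is the desired nontrivial $\partial$-algebraic relation among $y_1,\dots,y_t$ over $\K(x)$.

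For $(\Rightarrow)$, which is the principal obstacle, the plan is to invoke the parameterized ($\partial$-)difference Galois theory of \cite{HardouinSinger2008} applied to the rank-one diagonal $\sigma$-system $\sigma(Y_i)=a_iY_i$ over the $\sigma\partial$-field $\K(x,\log x)$, following the template of the analogous shift result \cite[Cor.~2.1]{arreche:2017} and $q$-difference result \cite[Prop.~3.5]{ArrecheZhang2020}. The $\partial$-dependence hypothesis forces the parameterized Galois group to be a proper $\partial$-algebraic subgroup of $\mathbb{G}_m^t$; Cassidy's classification of $\partial$-subgroups of algebraic tori then supplies integers $k_i\in\Z$, not all zero, for which $\sum_ik_i\partial y_i/y_i\in\K(x,\log x)$, and applying $\Delta_1$ via the key identity yields $\sum_ik_i\partial a_i/a_i=\Delta_1(G)$ for some $G\in\K(x,\log x)$. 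The final descent from $\K(x,\log x)$ to $\K(x)$ is routine: expanding $G=\sum_\lambda G_\lambda(\log x)^\lambda\in\K(x)((\log x))$ with $G_\lambda\in\K(x)$ and using $\sigma(\log x)=p\log x$ gives $\Delta_1(G)=\sum_\lambda\Delta_{\lambda+1}(G_\lambda)(\log x)^\lambda$. Comparing this against the right-hand side, which lies in $\K(x)$, forces $\Delta_{\lambda+1}(G_\lambda)=0$ for every $\lambda\neq 0$; since a direct check on partial fractions shows the kernel of $\Delta_\mu$ on $\K(x)$ to be trivial for $\mu\neq 0$ (and equal to $\K$ for $\mu=0$), only $G_{-1}\in\K$ (corresponding to the $\sigma$-invariant $1/\log x$) and $g:=G_0\in\K(x)$ survive, giving $\sum_ik_i\partial a_i/a_i=\Delta_1(g)$ as required.
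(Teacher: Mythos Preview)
Your $(\Leftarrow)$ direction is essentially the paper's argument in different clothing: the paper works with $\delta=(\log x)\,\partial$ and observes directly that $\sum_ik_i\,\delta y_i/y_i - g\log x$ is $\sigma$-invariant, hence in $\K$, which is already a $\delta$-algebraic relation over $\K(x,\log x)$ and therefore a $\partial$-algebraic one over $\K(x)$. One caveat: your claim that ``applying $\partial^2$ to both sides eliminates $\log x$'' is not literally true --- after two derivatives the identity $(\log x)(\partial z-gz)=cz$ still carries a factor of $\log x$. What actually works is to apply $\partial$ once and then eliminate $\log x$ between the original identity and its derivative; this does produce a nontrivial relation over $\K(x)$.

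Your $(\Rightarrow)$ direction has a genuine gap at step~2. Cassidy's classification of differential-algebraic subgroups of $\mathbb{G}_m^t$, combined with the parameterized Galois theory of \cite{HardouinSinger2008}, does \emph{not} directly supply integers $k_i$ with $\sum_ik_i\,\partial y_i/y_i\in\K(x,\log x)$. What it supplies is integers $k_i$ \emph{together with} a nonzero linear operator $L\in\K[\delta]$ such that $L\bigl(\sum_ik_i\,\delta y_i/y_i\bigr)$ lies in the base field --- equivalently (and this is where the paper begins) operators $\mathcal{L}_i\in\K[\delta]$, not all zero, and $G\in\K(x,\log x)$ with $\sum_i\mathcal{L}_i(\delta a_i/a_i)=\sigma(G)-G$. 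The descent from these $\mathcal{L}_i$ to plain integers is exactly the nontrivial step, and it is the raison d'\^etre of the twisted Mahler discrete residues developed in this paper. Concretely: extracting the top power of $\log x$ converts the relation into a $\lambda$-summability statement $\sum_ik_{i,\lambda-1}\,\partial^{\lambda-1}(\partial a_i/a_i)=\Delta_\lambda(g_\lambda)$ for some $\lambda\geq 1$, and Lemma~\ref{lem:nishioka} then identifies the resulting $\lambda$-residues in degree~$\lambda$ with rational multiples of the $1$-residues of $\sum_ik_{i,\lambda-1}\,\partial a_i/a_i$, so that the Main Theorem yields the desired $1$-summability with integer coefficients. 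The shift and $q$-difference results you cite as a template perform precisely this reduction with their own discrete residues; it is not a consequence of Cassidy alone. Your descent step~4 from $\K(x,\log x)$ to $\K(x)$ is correct and pleasant, but it only fires once the integers $k_i$ are already in hand, so it does not substitute for the missing argument.
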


\begin{proof} First, suppose there exist $k_1,\dots,k_t\in\mathbb{Z}$ and $g\in\mathbb{K}(x)$ satisfying \eqref{eq:prop:nishioka}. Consider \[\sigma\left(\sum_{i=1}^t\frac{\delta y_i}{y_i}-g\log x\right)-\left(\sum_{i=1}^t\frac{\delta y_i}{y_i}-g\log x\right)=\log x\left(\sum_{i=1}^t\frac{\partial a_i}{a_i}-(p\sigma(g)-g)\right)=0,\] and therefore \[\sum_{i=1}^t\frac{\delta y_i}{y_i}-g\log x\in U^\sigma=\mathbb{K},\] and therefore $y_1,\dots,y_t$ are $\delta$-dependent over $\mathbb{K}(x,\log x)$, which is equivalent to them being $\partial$-dependent over $\mathbb{K}(x)$, since $\log x$ is $\partial$-algebraic over $\mathbb{K}(x)$.

Now suppose $y_1,\dots,y_t$ are $\partial$-dependent over $\mathbb{K}(x)$. Then there exist linear differential operators $\mathcal{L}_i\in\mathbb{K}[\delta]$, not all zero, such that \[\sum_{i=1}^t\mathcal{L}_i\left(\frac{\delta(a_i)}{a_i}\right)=\sigma(G)-G\] for some $G\in\mathbb{K}(x,\log x)$. Let $\lambda\geq 1$ be as small as possible such that $\mathrm{ord}(\mathcal{L}_i)\leq \lambda-1$ for every $1\leq i\leq t$. Then we must have \[G=\sum_{\ell=1}^\lambda g_\ell \log^\ell x\qquad\text{with} \qquad g_1,\dots,g_\lambda\in\mathbb{K}(x).\] Moreover, writing each $\mathcal{L}_i=\sum_{j=0}^{\lambda-1}k_{i,j}\delta^j$, we must also have \begin{equation}\label{eq:telescoping-relation}\sum_{i=1}^tk_{i,\lambda-1}\partial^{\lambda-1}\left(\frac{\partial a_i}{a_i}\right)=p^\lambda\sigma(g_\lambda)-g_\lambda.\end{equation}

Without loss of generality we can reduce to the situation where, for each $\tau\in\mathcal{T}_0$ and for each $1\leq i \leq t$ such that $\tau\in\mathrm{supp}(\frac{\partial a_i}{a_i})$ we have the same bouquet $\beta(\frac{\partial a_i}{a_i},\tau)$ (cf.~Definition~\ref{defn:non-torsion-height}, and similarly that for each $\tau\in\mathcal{T}_+$ and for each $1\leq i \leq t$ such that $\tau\in\mathrm{supp}(\frac{\partial a_i}{a_i})$ we have $\mathrm{ht}(\frac{\partial a_i}{a_i},\tau)$ the same constant for each $i=1,\dots,t$. Under these conditions, \eqref{eq:telescoping-relation} implies that \[\sum_{i=1}^tk_{i,\lambda-1}\mathrm{dres}_\lambda\left(\partial^{\lambda-1}\left(\frac{\partial a_i}{a_i}\right),\tau,\lambda\right)=\mathbf{0}\] for every $\tau\in\mathcal{T}$. But by Lemma~\ref{lem:nishioka}, this is equivalent to \[\sum_{i=1}^tk_{i,\lambda-1}\mathrm{dres}_1\left(\frac{\partial a_i}{a_i},\tau,1\right)=\mathbf{0},\] and since each $\mathrm{dres}_1(\frac{\partial a_i}{a_i},\tau,1)_\alpha\in\mathbb{Q}\cdot \alpha$ uniformly in $1\leq i\leq t$ and $\alpha\in\tau$ (again by Lemma~\ref{lem:nishioka}), we may further take the $k_{i,\lambda-1}\in\mathbb{Z}$.
\end{proof}


\section{Examples}

In~\cite[Section 5]{arreche-zhang:2022}, the authors provide two small examples for the $\lambda$-Mahler discrete residues 
for $\lambda = 0$. Here, we illustrate $\lambda$-Mahler discrete residues for $\lambda = \pm 1$ in several examples. 
Example~\ref{EG:summable1} gives a $1$-Mahler summable $f$ in the non-torsion case $\tau \subset \mathcal{T}_0$. Example~\ref{EG:nonsummable1} gives a $1$-Mahler non-summable $f$ in the torsion case $\tau\subset \mathcal{T}_{+}$. 
Moreover, Example~\ref{EG:summable-1} gives a $(-1)$-Mahler summable $f$ in the non-torsion case $\tau \subset \mathcal{T}_{0}$. 
Example~\ref{EG:nonsummable-1} gives a $(-1)$-Mahler non-summable $f$ in the torsion case $\tau \subset \mathcal{T}_{+}$.

\begin{eg} \label{EG:summable1}
Let $p = 3, \lambda = 1$, and $\tau=\tau(2)$. Consider the following $f = f_{\tau}$ with $\mathrm{sing}(f,\tau)=\{2, \sqrt[3]{2}, \zeta_3\sqrt[3]{2}, \zeta_3^2\sqrt[3]{2}\}$ :
\begin{align*}
f & = \frac{-x^6+4 x^3+3 x^2-12 x+8}{(x-2)^2 \left(x^3-2\right)^2} \\
  & = \frac{-1}{(x-2)^2}+\frac{1}{6\sqrt[3]{2}}\cdot\sum_{i=0}^2 \frac{\zeta_3^{2i}}{(x-\zeta_3^i\sqrt[3]{2})^2} -\frac{1}{3\sqrt[3]{4}}\cdot\sum_{i=0}^2 \frac{\zeta_3^i}{x-\zeta_3^i\sqrt[3]{2}} \\
  & = \sum_{k = 1}^2 \sum_{\alpha \in \beta(f, \tau)} \frac{c_k(\alpha)}{(x-\alpha)^k},
\end{align*}
where $\beta(f, \tau) = \{2, \gamma, \zeta_3 \gamma, \zeta_3^2 \gamma \}$ with $\gamma := \sqrt[3]{2}$. By Definition~\ref{defn:non-torsion-height}, we have $\mathrm{ht}(f,\tau) = 1$. 
It follows from Definition~\ref{defn:non-torsion-residues} that for $i \in \{0, 1, 2\}$: 
\begin{align*}
\mathrm{dres}_1(f,\tau,1)_{\zeta_3^i\gamma } 
& = V_{1, 0}^1 (\zeta_3^i \gamma) c_1(\zeta_3^i \gamma) + 3 V_{1, 1}^1(\zeta_3^i \gamma) c_1(2) + V_{1, 0}^2(\zeta_3^i \gamma) c_1(\zeta_3^i \gamma) + 3 V_{1, 1}^2(\zeta_3^i \gamma)c_2(2) \\
& = 1 \cdot (- \frac{\zeta_3^i}{3 \sqrt[3]{4}}) + (-3) \cdot \left(-\frac{\zeta_3^i \gamma}{2 \cdot 3^2}\right) \\
& = 0,
\end{align*}
and 
\begin{align*}
\mathrm{dres}_1(f,\tau,2)_{\zeta_3^i\gamma } 
& = V_{2, 0}^2 (\zeta_3^i \gamma) c_2(\zeta_3^i \gamma) + 3 V_{2, 1}^2(\zeta_3^i \gamma)c_2(2) \\
& = \frac{\zeta_3^{2 i}}{6 \sqrt[3]{2}}+ (-3) \cdot \left(\frac{\zeta_3^{2 i}}{2 \cdot 3^2 \cdot \sqrt[3]{2}}\right) \\
& = 0.
\end{align*}
Thus, we see from Proposition~\ref{prop:non-torsion-residues} that $f$ is $1$-Mahler summable. And indeed, 
$$f  = \Delta_1\left(\frac{1}{(x-2)^2}\right).$$
\end{eg}

 \begin{eg} \label{EG:nonsummable1}
Let $p = 3, \lambda = 1$, and $\tau=\tau(\zeta_4)$. Consider the following $f=f_\tau$ with $\mathrm{sing}(f,\tau)=\{\zeta_4^{\pm 1},\zeta_{12}^{\pm 1},\zeta_{12}^{\pm 5}\}$: 
\begin{align*}
f & = \frac{-2 x^4+2 x^2+1}{\left(x^2+1\right) \left(x^4-x^2+1\right)} \\
  & = \frac{1}{2} \left( -\frac{\zeta_4^3}{x - \zeta_4} - \frac{\zeta_4}{x - \zeta_4^3} + \frac{\zeta_{12}^7}{x - \zeta_{12}} + \frac{\zeta_{12}^{11}}{x - \zeta_{12}^5} + \frac{\zeta_{12}}{x - \zeta_{12}^7} + \frac{\zeta_{12}^5}{x - \zeta_{12}^{11}} \right) \\
  & =  \sum_{\alpha \in \mathrm{sing}(f,\tau)} \frac{c_k(\alpha)}{x-\alpha}.
\end{align*}
By Definition~\ref{defn:torsion-height}, we see that $\mathrm{ht}(f,\tau) = 1$. Furthermore, by Definition~\ref{defn:omega-section}, \ref{defn:omega-average}, and~\ref{defn:d-map}, we 
find that
\begin{align*}
 \omega & := \omega_{1, \tau}(f) = -1/4, \\
 \mathcal{I}_{1, \tau}^{(\omega)}(\mathbf{c})&  = \left(d_1(\zeta_4), d_1(\zeta_4^3)\right) = -\frac{1}{4}(\zeta_4 + \zeta_4^3) \left(1,  1\right), \\
 \mathcal{D}_{1, \tau}(\mathbf{d}) & = \left(\tilde{c}_1(\zeta_4), \tilde{c}_1(\zeta_4^3)\right) = (0, 0). 
\end{align*}
Thus, it follows from Definition~\ref{defn:torsion-residues} that 
\begin{align*}
\mathrm{dres}_1(f, \tau, 1)_{\zeta_{12}} & = V_{1, 0}^1(\zeta_{12}) \cdot c_1(\zeta_{12}) - \mathbb{V}_{1, 0}^1 \cdot (\zeta_{12})^{-8} \cdot d_1(\zeta_{12}^9)  \\
& = c_1(\zeta_{12}) - \zeta_3 \cdot d_1(\zeta_4^3) \\
& = \zeta_{12}^7 - \zeta_3 \cdot (-\frac{1}{4}) \cdot (\zeta_4^3 - \zeta_4) \\
& = \frac{1}{4} \zeta_{12} + \frac{3}{4} \zeta_{12}^7  \neq 0.
\end{align*}
Similarly, a direct calculation shows that 
\begin{align*}
\mathrm{dres}_1(f, \tau, 1)_{\zeta_{12}^7} & =  \frac{3}{4} \zeta_{12} + \frac{1}{4} \zeta_{12}^7  \neq 0, \\
\mathrm{dres}_1(f, \tau, 1)_{\zeta_{12}^5} & =  \frac{1}{4} \zeta_{12}^5 + \frac{3}{4} \zeta_{12}^{11}  \neq 0, \\
\mathrm{dres}_1(f, \tau, 1)_{\zeta_{12}^5} & =  \frac{3}{4} \zeta_{12}^5 + \frac{1}{4} \zeta_{12}^{11}  \neq 0,
\end{align*}
and
\begin{align*}
\mathrm{dres}_1(f, \tau, 1)_{\zeta_{4}} & = c_1(\zeta_4) - \tilde{c}_1(\zeta_4) = c_1(\zeta_4) = -\frac{1}{2} \zeta_4^3 \neq 0, \\
\mathrm{dres}_1(f, \tau, 1)_{\zeta_{4}^3} & =  c_1(\zeta_4^3) - \tilde{c}_1(\zeta_4^3) = c_1(\zeta_4^3) = -\frac{1}{2} \zeta_4 \neq 0.
\end{align*}
Thus, it follows from Proposition~\ref{prop:torsion-residues} that $f$ is not $1$-Mahler summable.
 \end{eg}
 
 \begin{eg} \label{EG:summable-1}
 Let $p = 3, \lambda = -1$, and $\tau=\tau(5)$. Consider the following $f = f_{\tau}$ with $\mathrm{sing}(f,\tau)=\{5, \sqrt[3]{5}, \zeta_3\sqrt[3]{5}, \zeta_3^2\sqrt[3]{5}\}$ :
\begin{align*}
f & =\frac{-3 x^6+30 x^3+x^2-10 x-50}{3 (x-5)^2 \left(x^3-5\right)^2} \\
  & = \frac{-1}{(x-5)^2}+\frac{1}{135 \sqrt[3]{5}}\cdot\sum_{i=0}^2 \frac{\zeta_3^{2i}}{(x-\zeta_3^i\sqrt[3]{5})^2} -\frac{2}{135 \sqrt[3]{25}}\cdot\sum_{i=0}^2 \frac{\zeta_3^i}{x-\zeta_3^i\sqrt[3]{5}} \\
  & = \sum_{k = 1}^2 \sum_{\alpha \in \beta(f, \tau)} \frac{c_k(\alpha)}{(x-\alpha)^k},
\end{align*}
where $\beta(f, \tau) = \{2, \gamma, \zeta_3 \gamma, \zeta_3^2 \gamma \}$ with $\gamma := \sqrt[3]{5}$. By Definition~\ref{defn:non-torsion-height}, we have $\mathrm{ht}(f,\tau) = 1$. 
It follows from Definition~\ref{defn:non-torsion-residues} that for $i \in \{0, 1, 2\}$: 
\begin{align*}
\mathrm{dres}_1(f,\tau,1)_{\zeta_3^i\gamma } 
& = V_{1, 0}^1 (\zeta_3^i \gamma) c_1(\zeta_3^i \gamma) + 3^{-1} V_{1, 1}^1(\zeta_3^i \gamma) c_1(2) + V_{1, 0}^2(\zeta_3^i \gamma) c_1(\zeta_3^i \gamma) + 3^{-1} V_{1, 1}^2(\zeta_3^i \gamma)c_2(2) \\
& = 1 \cdot (- \frac{2 \zeta_3^i}{135 \sqrt[3]{25}}) + (-\frac{1}{3}) \cdot \left(-\frac{ 2\zeta_3^i \gamma}{3^2 \cdot 5^2}\right) \\
& = 0,
\end{align*}
and 
\begin{align*}
\mathrm{dres}_1(f,\tau,2)_{\zeta_3^i\gamma } 
& = V_{2, 0}^2 (\zeta_3^i \gamma) c_2(\zeta_3^i \gamma) + 3^{-1} V_{2, 1}^2(\zeta_3^i \gamma)c_2(2) \\
& = \frac{\zeta_3^{2 i}}{135 \sqrt[3]{2}}+ (-\frac{1}{3}) \cdot \left(\frac{\zeta_3^{2 i}}{3^2 \cdot 5 \cdot \sqrt[3]{5}}\right) \\
& = 0.
\end{align*}
Thus, we see from Proposition~\ref{prop:non-torsion-residues} that $f$ is $(-1)$-Mahler summable. And indeed, 
$$f  = \Delta_{-1}\left(\frac{1}{(x-5)^2}\right).$$
\end{eg}

 \begin{eg} \label{EG:nonsummable-1}
 Let $p = 2, \lambda = -1$, and $\tau=\tau(\zeta_3)$. Consider the following $f=f_\tau$ with $\mathrm{sing}(f,\tau)=\{\zeta_3^{\pm 1},\zeta_{6}^{\pm 1}\}$: 
\begin{align*}
f & = \frac{1}{2 \left(x^4+x^2+1\right)} \\
  & = -\frac{1}{2} \left( \frac{\zeta_3}{x - \zeta_3} + \frac{\zeta_3^{-1}}{x - \zeta_3^{-1}} + \frac{\zeta_{6}}{x - \zeta_{6}} + \frac{\zeta_{6}^{-1}}{x - \zeta_{6}^{-1}} \right) \\
  & = \sum_{\alpha \in \mathrm{sing}(f,\tau)} \frac{c_k(\alpha)}{x-\alpha}.
\end{align*}
By Definition~\ref{defn:torsion-height}, we see that $\mathrm{ht}(f,\tau) = 1$. Furthermore, by Definition~\ref{defn:omega-section}, \ref{defn:omega-average}, and~\ref{defn:d-map}, we 
find that
\begin{align*}
 \omega & := \omega_{1, \tau}(f) = 0, \\
 \mathcal{I}_{1, \tau}^{(\omega)}(\mathbf{c})&  = \left(d_1(\zeta_3), d_1(\zeta_3^{-1})\right) = \frac{2}{3} \left(\zeta_3,  \zeta_3^{-1}\right), \\
 \mathcal{D}_{1, \tau}(\mathbf{d}) & = \left(\tilde{c}_1(\zeta_3), \tilde{c}_1(\zeta_3^{-1})\right) = -\frac{1}{3} \left(\zeta_3,  \zeta_3^{-1}\right). 
\end{align*}
Thus, it follows from Definition~\ref{defn:torsion-residues} that 
\begin{align*}
\mathrm{dres}_1(f, \tau, 1)_{\zeta_{6}} & = V_{1, 0}^1(\zeta_{6}) \cdot c_1(\zeta_{6}) - \mathbb{V}_{1, 0}^1 \cdot (\zeta_{6})^{-3} \cdot \left(\tilde{c}_1(\zeta_3^{-1}) + d_1(\zeta_{3}^{-1}) \right) \\
& = c_1(\zeta_6) + \tilde{c}_1(\zeta_3^{-1}) + d(\zeta_3^{-1})\\
& = -\frac{1}{2}\zeta_6 - \frac{1}{3} \zeta_3^{-1} + \frac{2}{3} \zeta_3^{-1} \\
& = \frac{1}{3} \zeta_3^{-1} - \frac{1}{2} \zeta_6 \neq 0.
\end{align*}
Similarly, a direct computation shows that 
\[
\mathrm{dres}_1(f, \tau, 1)_{\zeta_{6}^{-1}} = \frac{1}{3} \zeta_3 - \frac{1}{2} \zeta_6^{-1} \neq 0. 
\]

Therefore, it follows from Proposition~\ref{prop:torsion-residues} that $f$ is not $(-1)$-Mahler summable.
 \end{eg}
 
 \bibliographystyle{alpha}

\end{document}